\documentclass[11pt]{amsart}
\usepackage{amsmath, amssymb, amsfonts, verbatim, graphicx, amsthm, tikz-cd, mathrsfs, color, comment, fancyhdr, amsrefs}
\usepackage[top=1in, bottom=1in, left=1in, right=1in]{geometry}
\usepackage{enumerate}

\raggedbottom

\usepackage{mathtools}

\usepackage{url}

\usepackage{enumitem}

\usepackage[symbol]{footmisc}

\usepackage[foot]{amsaddr}

\allowdisplaybreaks[1]

\usepackage[sc]{mathpazo}
\linespread{1.5}         
\usepackage[T1]{fontenc}

\newcommand{\vertiii}[1]{{\left\vert\kern-0.25ex\left\vert\kern-0.25ex\left\vert #1
		\right\vert\kern-0.25ex\right\vert\kern-0.25ex\right\vert}}

\newcommand{\Folner}{Følner}

\theoremstyle{plain}
\newtheorem{Thm}{Theorem}[section]
\newtheorem{Prop}[Thm]{Proposition}
\newtheorem{Lem}[Thm]{Lemma}
\newtheorem{Cor}[Thm]{Corollary}

\theoremstyle{definition}
\newtheorem{Def}[Thm]{Definition}
\newtheorem{Not}[Thm]{Notation}
\newtheorem{Rmk}[Thm]{Remark}
\newtheorem{Ex}[Thm]{Example}

\setcounter{section}{-1}

\renewcommand{\epsilon}{\varepsilon}

\usepackage{setspace}

\usepackage{hyperref}

\title{Temporo-spatial differentiations with respect to finite unions of balls}

\author{Aidan Young$^1$}
\address{University of North Carolina at Chapel Hill}

\email{$^1$\url{aidanjy@live.unc.edu}}
\begin{document}
	
	\maketitle
	
	\begin{abstract}
		Here we study temporo-spatial differentiation problems with respect to sequences of finite unions of balls. We establish several convergence results, as well as construct pathological temporo-spatial differentiations with prescribed sets of limit points. We also demonstrate the prevalence of certain pathological temporo-spatial differentiations in the presence of a specification-like property.
	\end{abstract}

Temporo-spatial differentiations were introduced in \cite{Assani-Young} under the name of spatial-temporal differentiations. The central temporo-spatial differentiation problem is this: Given a continuous action $T : G \curvearrowright X$ of a discrete semigroup $G$ on a compact metric space $X$ with a sequence $(F_k)_{k = 1}^\infty$ of nonempty finite subsets of $G$, a Borel probability measure $\mu$ on $X$, a bounded measurable function $f : X \to \mathbb{C}$, and a sequence $(C_k)_{k = 1}^\infty$ of measurable subsets of $X$ with positive measure, what can be said about the limiting behavior of the sequence
$$
\left( \frac{1}{\mu(C_k)} \int_{C_k} \frac{1}{|F_k|} \sum_{j \in F_k} T_j f \mathrm{d} \mu \right)_{k = 1}^\infty ?
$$
In this article, we focus on the case where the spatial averaging sequence $(C_k)_{k = 1}^\infty$ consists of finite unions of balls, a setting we call ``multi-local." We study sufficient conditions for these corresponding temporo-spatial differentiations to converge, as well as the existence and prevalence of pathological multi-local temporo-spatial differentiations.
	
	In Section \ref{Notation}, we establish several notations that will be used throughout the article, as well as some standing assumptions and conventions.
	
	In Section \ref{Convergence}, we provide sufficient conditions for multi-local temporo-spatial differentations to converge. We also show how these convergence results can fail if certain assumptions are relaxed.
	
	In Section \ref{EO}, we briefly present the theory of ergodic optimization. In particularly, we characterize the maximum ergodic average in the context of continuous actions of amenable groups.
	
	In Section \ref{Phasing problems}, we construct multi-local temporo-spatial differentiations for a given real-valued continuous function $f$ which have a prescribed compact set $\mathcal{K}$ as the set of limit points of the temporo-spatial differentiation.
	
	In Section \ref{Chasing measures}, we consider temporo-spatial differentiations as sequences of measures
	$$\left( f \mapsto \frac{1}{\mu(C_k)} \int_{\mu(C_k)} \sum_{j = 0}^{k - 1} T^j f \mathrm{d} \mu \right)_{k = 1}^\infty ,$$
	and consider how to construct sequences $(C_k)_{k = 1}^\infty$ for which \linebreak$\operatorname{LS} \left( \left( f \mapsto \frac{1}{\mu(C_k)} \int_{C_k} \sum_{j = 0}^{k - 1} T^j f \mathrm{d} \mu \right)_{k = 1}^\infty \right)$ is some prescribed subset of the Choquet simplex of $T$-invariant Borel probability measures on $X$, where $\operatorname{LS} \left( \left( z_k \right)_{k = 1}^\infty \right)$ denotes the set of all limits of convergent subsequences of $(z_k)_{k = 1}^\infty$ (defined in more detail in Section \ref{Notation}). In particular, we construct examples of $(C_k)_{k = 1}^\infty$ for which $\operatorname{LS} \left( \left( f \mapsto \frac{1}{\mu(C_k)} \int_{C_k} \sum_{j = 0}^{k - 1} T^j f \mathrm{d} \mu \right)_{k = 1}^\infty \right)$ is the entire Choquet simplex of $T$-invariant measures.
	
	In Section \ref{Specification}, we show that for a system $(X, T)$ with a specification-like property that we call the Very Weak Specification Property, there exists a residual set of $x \in X$ that exhibit a strong form of the maximal Birkhoff average oscillation property. Specifically, there exists a residual set of $x \in X$ such that $\operatorname{LS} \left( \left( \mu_{x, \pi(k)} \right)_{k = 1}^\infty \right)$ is the entire Choquet simplex of $T$-invariant measures for all non-constant polynomials $\pi(t) \in \mathbb{Q}[t]$ such that $\pi(\mathbb{N}) \subseteq \mathbb{N}$, where $\mu_{x, k}$ are the the empirical measures $\mu_{x, k} = \frac{1}{k} \sum_{j = 0}^{k - 1} \delta_x \circ T^j$ for $x \in X$. Consequently, for sequences $(r_k)_{k = 1}^\infty$ of radii decaying to $0$ sufficiently fast, we have for a residual set of $x \in X$ that $\operatorname{LS} \left( \left( f \mapsto \frac{1}{\mu(B(x; r_k))} \int_{B(x; r_k)} \sum_{j = 0}^{\pi(k) - 1} T^j f \mathrm{d} \mu \right)_{k = 1}^\infty \right) $ is the entire Choquet simplex of $T$-invariant measures for all non-constant integer-valued polynomials $\pi(t)$ that send nonnegative integers to nonnegative integers.
	
	\section{Notations and conventions}\label{Notation}
	
	Here we identify particular notations and conventions we adopt throughout this article. Individual sections might place additional assumptions on some of the objects we define here. We also place more novel definitions in the later sections of the article.
	
	We will let $(X, \rho)$ be a compact metric space, and $T : G \curvearrowright X$ will be a continuous monoidal left-action of a discrete monoid $G$ on $X$ by continuous maps $(T_g)_{g \in G}$ (not necessarily invertible). That is to say, the maps $(T_g)_{g \in G}$ will satisfy the laws
	\begin{align*}
		T_{g_1} \circ T_{g_2}	& = T_{g_1 g_2}	& (\forall g_1, g_2 \in G) , \\
		T_{1_G}	& = \operatorname{id}_X ,
	\end{align*}
	where $1_G$ denotes the identity element of $G$. We will use $\mu$ to denote a Borel probability measure on $X$, though we will not in general assume that $\mu$ is $T$-invariant. The support of $\mu$ will be denoted $\operatorname{supp}(\mu)$.
	
	Given a finite subset $F$ of $G$, and a function $f : X \to \mathbb{C}$, we write
	$$
	\operatorname{Avg}_F f : = \frac{1}{|F|} \sum_{g \in F} T_g f ,
	$$
	where $T_g f : = f \circ T_g$. Similarly, if $\beta$ is a Borel probability measure on $X$, and $E \subseteq X$ is a Borel subset of $X$, we will write
	$$
	\left( \beta \circ \operatorname{Avg}_F \right) (E) : = \frac{1}{|F|} \sum_{g \in F} \beta \left( T_g^{-1} E \right) .
	$$
	These notations are consistent with each other in the sense that if $f \in C(X)$, then
	$$
	\int_X f \mathrm{d} \left( \beta \circ \operatorname{Avg}_F \right) = \int_X \operatorname{Avg}_F f \mathrm{d} \beta .
	$$
	If no domain is specified for an integral $\int$, then the integral is assumed to be over $X$, i.e. $\int : = \int_X$.
	
	We will denote the space of all Borel probability measures on $X$ by $\mathcal{M}(X)$. We will always consider $\mathcal{M}(X)$ with the weak*-topology, making $\mathcal{M}_T(X)$ a Choquet simplex. We use $\mathcal{M}_T(X)$ to denote the space of $T$-invariant Borel probability measures on $X$, also equipped with the weak*-topology to make $\mathcal{M}_T(X)$ a Choquet simplex.
	
	We use $\partial_e S$ to denote the set of extreme points of a subset $S$ of a real topological vector space, i.e. $\partial_e S$ denotes the set of all points in $S$ which cannot be expressed nontrivially as a convex combination of points in $S$.
	
	We will use $\mathbb{N}$ to denote the set of positive integers, and $\mathbb{N}_0$ to denote the set of nonnegative integers.
	
	A sequence $(F_k)_{k = 1}^\infty$ of finite subsets of a group $G$ is called \emph{\Folner} if
	\begin{align*}
		\lim_{k \to \infty} \frac{\left| F_k \Delta g F_k \right|}{|F_k|}	& = 0	& (\forall g \in G) ,
	\end{align*}
	where $|\cdot|$ denotes cardinality and $\Delta$ is the symmetric difference, i.e. $A \Delta B = (A \setminus B) \cup (B \setminus A)$.

	Given a sequence $(z_k)_{k = 1}^\infty$ in a topological space $Z$, we write
	$$\operatorname{LS} \left( (z_k)_{k = 1}^\infty \right) : = \left\{ \lim_{\ell \to \infty} z_{k_\ell} : k_1 < k_2 < \cdots, \; \textrm{$\lim_{\ell \to \infty} z_{k_\ell}$ exists} \right\}$$
	to denote the set of limit points of $(z_k)_{k = 1}^\infty$, called the \emph{limit set} of $(z_k)_{k = 1}^\infty$.
	
	\section{Convergence results and their limitations}\label{Convergence}
	
	\begin{Def}
		Let $\bar{x} = \left( x^{(1)}, \ldots, x^{(n)} \right) \in X^n, \bar{r} = \left( r^{(1)} , \ldots, r^{(n)} \right) \in (0, \infty)^n$. We write
		$$B \left( \bar{x} ; \bar{r} \right) : = \bigcup_{h = 1}^n B\left( x^{(h)}, r^{(h)} \right) , $$
		where $B \left( x ; r \right) : = \left\{ y \in X : \rho(x, y) < r \right\}$ is the open ball with center $x$ and radius $r$. We refer to sets of the form $B(\bar{x} ; \bar{r})$ as \emph{multi-balls}.
	\end{Def}
	
	\begin{Lem}\label{Multi-ball with distinct centers}
		Every multi-ball $B \left( x^{(1)}, \ldots, x^{(n)} ; r^{(1)}, \ldots, r^{(n)} \right)$ can be expressed in the form $$B \left( y^{(1)}, \ldots, y^{(m)} ; s^{(1)}, \ldots, s^{(m)} \right) ,$$
		where $y^{(1)}, \ldots, y^{(m)}$ are distinct.
	\end{Lem}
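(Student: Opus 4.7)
The plan is to proceed by a simple merging argument. The key observation is that for any point $x \in X$ and radii $r, r' > 0$, we have $B(x; r) \cup B(x; r') = B(x; \max(r, r'))$, since an open ball with a given center is determined by its radius and the larger ball contains the smaller. Thus any two balls in the multi-ball sharing a center can be combined into a single ball.

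Concretely, I would induct on $n$. The case $n = 1$ is trivial (the single center is already ``distinct''). For the inductive step, given $B(x^{(1)}, \ldots, x^{(n)}; r^{(1)}, \ldots, r^{(n)})$, either the centers $x^{(1)}, \ldots, x^{(n)}$ are already distinct, in which case we are done, or there exist indices $i < j$ with $x^{(i)} = x^{(j)}$. In the latter case, delete the $j$-th ball from the list and replace $r^{(i)}$ by $\max(r^{(i)}, r^{(j)})$; by the observation above, the resulting multi-ball with $n - 1$ constituent balls equals the original. Now apply the inductive hypothesis.

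Alternatively (and more cleanly), I would partition the index set $\{1, \ldots, n\}$ according to the equivalence relation $i \sim j \iff x^{(i)} = x^{(j)}$, let $\{y^{(1)}, \ldots, y^{(m)}\}$ be a set of representatives of the distinct centers, and define $s^{(\ell)} := \max\{r^{(i)} : x^{(i)} = y^{(\ell)}\}$. Then
$$B(\bar{x}; \bar{r}) = \bigcup_{h=1}^n B(x^{(h)}, r^{(h)}) = \bigcup_{\ell=1}^m \bigcup_{x^{(i)} = y^{(\ell)}} B(y^{(\ell)}, r^{(i)}) = \bigcup_{\ell=1}^m B(y^{(\ell)}, s^{(\ell)}) = B(\bar{y}; \bar{s}),$$
which is the desired representation.

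There is no real obstacle here; the only thing to verify carefully is the elementary identity $B(x; r) \cup B(x; r') = B(x; \max(r, r'))$, which is immediate from the definition of an open ball in a metric space.
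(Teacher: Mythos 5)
Your proposal is correct, and your second, ``cleaner'' argument is essentially the paper's own proof: group the indices by common center and take the maximum radius within each group. The inductive variant is a minor repackaging of the same idea, so nothing further needs to be said.
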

	
	\begin{proof}
		If $x^{(1)}, \ldots, x^{(n)}$ are not already distinct, then we can write \linebreak$\left\{ x^{(1)}, \ldots, x^{(n)} \right\} = \left\{ x^{(h_1)}, \ldots, x^{(h_m)} \right\}$, where $h_1, \ldots, h_m \in \{1, \ldots, n\}$, and $x^{(h_1)}, \ldots, x^{(h_m)}$ are distinct. Then
		$$B \left( x^{(1)}, \ldots, x^{(n)} ; r^{(1)}, \ldots, r^{(n)} \right) = B \left( y^{(1)}, \ldots, y^{(p)} ; s^{(1)}, \ldots, s^{(m)} \right) ,$$
		where $y^{(p)} = x^{(h_p)} , s^{(p)} = \max \left\{ r^{(h)} : x^{(h)} = y^{(p)} \right\}$.
	\end{proof}
	
	\begin{Def}\label{Rapid decay}
		Let $(X, \rho)$ be a compact metric space, and let $T : G \curvearrowright X$ be an action of a discrete semigroup $G$ by H\"older maps $T_g$ equipped with functions $H, L : G \to (0, \infty)$ such that
		\begin{align*}
			\rho \left( T_g x, T_g y \right)	& \leq L(g) \cdot \rho(x, y)^{H(g)}	& (\forall g \in G, x \in X, y \in X) .
		\end{align*}
		We refer to the pair $(H, L)$ as a \emph{modulus of H\"older continuity} (abbreviated \emph{MoH\"oC}) for $T$. Let $\mathbf{F} = (F_k)_{k = 1}^\infty$ be a sequence of nonempty finite subsets of $G$. We say that a sequence $\left( \bar{r}_k \right)_{k = 1}^\infty$ of $n$-tuples $\bar{r}_k = \left( r_k^{(1)}, \ldots, r_k^{(n)} \right)_{k = 1}^\infty$ of positive numbers \emph{decays $(X, \rho, H, L, \mathbf{F})$-fast} if
		\begin{align*}
			\lim_{k \to \infty} \frac{ \left| \left\{ g \in F_k : L(g) \cdot \left( r_k^{(h)} \right)^{H(g)} > \delta \right\} \right| }{|F_k|}	& = 0	& (\forall \delta \in (0, \infty), h \in \{1, \ldots, n\}) , \\
			\lim_{k \to \infty} r_k^{(h)}	& = 0	& (\forall h \in \{1, \ldots, n\})  .
		\end{align*}
	\end{Def}
	
	An immediate observation about this definition is that if $\left( \bar{r}_k \right)_{k = 1}^\infty$ is a sequence of $n$-tuples of positive numbers that decay $(X, \rho, H, L, \mathbf{F})$-fast, and $\left( \bar{s}_k \right)_{k = 1}^\infty$ is another sequence of $n$-tuples of positive numbers for which there exists $K \in \mathbb{N}$ such that $s_k^{(h)} \leq r_k^{(h)}$ for all $h \in \{1, \ldots, n\}, k \geq K$, then $\left( \bar{s}_k \right)_{k = 1}^\infty$ decays $(X, \rho, H, L, \mathbf{F})$-fast. So we have in fact described a rapid decay condition. Moreover, any system of H\"older maps with MoH\"oC $(H, L)$ will admit a sequence $(r_k)_{k = 1}^\infty$ that decays $(X, \rho, H, L, \mathbf{F})$-fast.
	
	Our assumption that $\lim_{k \to \infty} r_k^{(h)} = 0$ ensures that if $x^{(1)}, \ldots, x^{(h)}$ are distinct points in $X$, then the balls $\left\{ B \left( x^{(h)} ; r_k^{(h)} \right) \right\}_{h = 1}^n$ are pairwise disjoint for sufficiently large $k$, since for sufficiently large $k$ we'll have that
	$$\max \left\{ r_k^{(1)}, \ldots, r_k^{(n)} \right\} < \frac{1}{2} \min \left\{ \rho \left( x^{(h_1)}, x^{(h_2)} \right) : 1 \leq h_1 < h_2 \leq n \right\} .$$
	
	For the remainder of this section, $T : G \curvearrowright X$ will be an action of a discrete group $G$ on $X$ by H\"older homoeomorphisms with MoH\"oC $(H, L)$, and $\mathbf{F} = (F_k)_{k = 1}^\infty$ will be a sequence of nonempty finite subsets of $G$.
	
	\begin{Not}
	Let $\mu$ be a Borel probability measure on $X$, and let $E \subseteq X$ be a $\mu$-measurable set such that $\mu(E) > 0$. The functional $\alpha_E : C(X) \to \mathbb{C}$ is defined as
	$$\alpha_E(f) : = \frac{1}{\mu(E)} \int_E f \mathrm{d} \mu .$$
	We will sometimes also treat $\alpha_E$ instead as a Borel probability measure $\alpha_E : A \mapsto \mu(A \vert E)$. These interpretations are consistent with each other in the sense that $\alpha_E(f) = \int f \mathrm{d} \alpha_E$ for all $f \in C(X)$.
	\end{Not}
	
	\begin{Lem}\label{Singly local pointwise reduction}
		Let $x \in X$, and let $(r_k)_{k = 1}^\infty$ be a sequence of positive numbers that decays \linebreak$(X, \rho, H, L, \mathbf{F})$-fast, and suppose $f \in C(X)$. Let $\mu$ be a Borel probability measure on $X$, and let $x \in \operatorname{supp}(\mu)$. Then
		$$\lim_{k \to \infty} \left( \alpha_{B(x; r_k)} \left( \operatorname{Avg}_{F_k} f \right) - \operatorname{Avg}_{F_k} f(x) \right) = 0 .$$
		Moreover, if $f$ satisfies the H\"older condition
		\begin{align*}
			|f(y) - f(z)|	& \leq c \cdot \rho(y, z)^\beta	& (\forall y, z \in X) ,
		\end{align*}
		for some constants $c, \beta \in (0, \infty)$, then
		$$\left| \alpha_{B(x; r_k)} \left( \operatorname{Avg}_{F_k} f \right) - \operatorname{Avg}_{F_k} f(x) \right| \leq \frac{c}{|F_k|} \sum_{g \in F_k} L(g)^\beta \cdot r_k^{\beta H(g)} .$$
	\end{Lem}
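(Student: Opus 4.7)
The plan is to handle both statements simultaneously by first obtaining a pointwise estimate inside the ball. Writing
$$\alpha_{B(x; r_k)}\left( \operatorname{Avg}_{F_k} f \right) - \operatorname{Avg}_{F_k} f(x) = \frac{1}{\mu(B(x; r_k))} \int_{B(x; r_k)} \left( \operatorname{Avg}_{F_k} f(y) - \operatorname{Avg}_{F_k} f(x) \right) \mathrm{d}\mu(y),$$
I can reduce the problem to uniformly bounding the integrand on $B(x; r_k)$. Here is where I use $x \in \operatorname{supp}(\mu)$: it guarantees $\mu(B(x;r_k)) > 0$ so that $\alpha_{B(x;r_k)}$ is well-defined for each $k$.

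For the quantitative H\"older bound, I simply apply the triangle inequality and the MoH\"oC:
$$\left| \operatorname{Avg}_{F_k} f(y) - \operatorname{Avg}_{F_k} f(x) \right| \leq \frac{1}{|F_k|} \sum_{g \in F_k} |f(T_g y) - f(T_g x)| \leq \frac{1}{|F_k|} \sum_{g \in F_k} c \cdot L(g)^\beta \cdot \rho(y,x)^{\beta H(g)},$$
and since $\rho(y,x) < r_k$ for $y \in B(x; r_k)$, the right-hand side is at most $\frac{c}{|F_k|} \sum_{g \in F_k} L(g)^\beta r_k^{\beta H(g)}$. Integrating this pointwise bound over $B(x; r_k)$ and dividing by $\mu(B(x; r_k))$ gives the second claim.

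For the qualitative limit with merely continuous $f$, I replace the H\"older estimate with uniform continuity on the compact space $X$. Given $\varepsilon > 0$, choose $\delta > 0$ so that $\rho(u, v) < \delta$ implies $|f(u) - f(v)| < \varepsilon$. Split $F_k = G_k^{\text{good}} \sqcup G_k^{\text{bad}}$, where $G_k^{\text{good}} := \{ g \in F_k : L(g) \cdot r_k^{H(g)} < \delta \}$. For $g \in G_k^{\text{good}}$ and $y \in B(x; r_k)$, the MoH\"oC gives $\rho(T_g y, T_g x) \leq L(g) \cdot r_k^{H(g)} < \delta$, hence $|f(T_g y) - f(T_g x)| < \varepsilon$; for $g \in G_k^{\text{bad}}$, I only use the trivial bound $|f(T_g y) - f(T_g x)| \leq 2 \|f\|_\infty$. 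Therefore
$$\left| \operatorname{Avg}_{F_k} f(y) - \operatorname{Avg}_{F_k} f(x) \right| \leq \varepsilon + 2 \|f\|_\infty \cdot \frac{|G_k^{\text{bad}}|}{|F_k|}.$$
The rapid decay hypothesis in Definition \ref{Rapid decay} (applied with the given $\delta$) forces $|G_k^{\text{bad}}|/|F_k| \to 0$, so this bound is eventually below $2\varepsilon$, uniformly in $y \in B(x; r_k)$. Integrating and dividing by $\mu(B(x; r_k))$ gives the desired limit.

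The proof is essentially routine; the only place requiring care is recognizing that the hypothesis $x \in \operatorname{supp}(\mu)$ is exactly what ensures $\alpha_{B(x; r_k)}$ makes sense, and that the rapid-decay condition is precisely calibrated to absorb the $g$'s for which the contraction is too weak to exploit uniform continuity directly. No deep machinery is needed beyond uniform continuity on a compact metric space and the pointwise H\"older estimate coming from the MoH\"oC.
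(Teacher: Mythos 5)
Your proposal is correct and follows essentially the same approach as the paper's proof: the paper also uses uniform continuity of $f$, splits $F_k$ into the set $A_k = \{g : L(g)\cdot r_k^{H(g)} \geq \delta\}$ (your $G_k^{\text{bad}}$) and its complement, bounds the bad part by $2\|f\|_{C(X)}$ and the good part by $\epsilon$, and invokes the rapid-decay hypothesis to kill $|A_k|/|F_k|$; the H\"older case is likewise handled by the same direct pointwise estimate. The only cosmetic difference is that you bound the integrand uniformly in $y$ before integrating, whereas the paper carries the integral through the splitting, but this is immaterial.
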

	
	\begin{proof}
		Fix $\epsilon > 0$. Since $f$ is continuous and $X$ is compact, we know that $f$ is \emph{uniformly} continuous, meaning that there exists $\delta > 0$ such that $$\rho(y, z) \leq \delta \Rightarrow |f(y) - f(z)| \leq \epsilon .$$
		Set
		$$A_k = \left\{ g \in F_k : L(g) \cdot r_k^{H(g)} \geq \delta \right\} .$$
		By the hypothesis that $(r_k)_{k = 1}^\infty$ decays $(X, \rho, H, L, \mathbf{F})$-fast, we know that $\lim_{k \to \infty} \frac{|A_k|}{|F_k|} = 0$.
		
		We estimate
		\begin{align*}
			& \left|\alpha_{B(x; r_k)} \left( \operatorname{Avg}_{F_k} f \right) - \operatorname{Avg}_{F_k} f(x) \right| \\
			=	& \left| \frac{1}{\mu(B(x; r_k))} \int_{B(x; r_k)} \frac{1}{|F_k|} \sum_{g \in F_k} \left( f(T_g y) - f(T_g x) \right) \mathrm{d} \mu(y) \right| \\
			\leq	& \frac{1}{|F_k|} \sum_{g \in F_k} \frac{1}{\mu(B(x; r_k))} \int_{B(x; r_k)} \left| f(T_g y) - f(T_g x) \right| \mathrm{d} \mu(y)	& (\dagger) \\
			=	& \left( \frac{1}{|F_k|} \sum_{g \in A_k} \frac{1}{\mu(B(x; r_k))} \int_{B(x; r_k)} \left| f(T_g y) - f(T_g x) \right|  \mathrm{d} \mu(y) \right) \\
			& + \left( \frac{1}{|F_k|} \sum_{g \in F_k \setminus A_k} \frac{1}{\mu(B(x; r_k))} \int_{B(x; r_k)} \left| f(T_g y) - f(T_g x) \right| \mathrm{d} \mu(y) \right) .
		\end{align*}
		We will return to the line marked $(\dagger)$ when we compute the estimate for the case where $f$ is H\"older. For now, we estimate these two terms separately.
		\begin{align*}
				& \frac{1}{|F_k|} \sum_{g \in A_k} \frac{1}{\mu(B(x; r_k))} \int_{B(x; r_k)} \left| f(T_g y) - f(T_g x) \right| \mathrm{d} \mu(y)	\\
			\leq	& \frac{1}{|F_k|} \sum_{g \in A_k} \frac{1}{\mu(B(x; r_k))} \int_{B(x; r_k)} \left\| 2 f \right\|_{C(X)} \mathrm{d} \mu(y) \\
			=	& \frac{2 |A_k|}{|F_k|} \|f\|_{C(X)} .
		\end{align*}
		Choose $K \in \mathbb{N}$ sufficiently large that $\frac{|A_k|}{|F_k|} \leq \epsilon$. Then for $k \geq K$, we have that
		$$\frac{2 |A_k|}{|F_k|} \|f\|_{C(X)} \leq 2 \|f\|_{C(X)} \epsilon .$$
		
		For the other of second of the two aforementioned terms, we observe that if $g \in F_k \setminus A_k$, then
		\begin{align*}
			\rho(T_g y, T_g x)	& \leq L(g) \cdot \rho(x, y)^{H(g)} \\
			& \leq \delta \\
			\Rightarrow |f(T_g y) - f(T_g x)|	& \leq \epsilon .
		\end{align*}
		Thus
		\begin{align*}
				& \frac{1}{|F_k|} \sum_{g \in F_k \setminus A_k} \frac{1}{\mu(B(x; r_k))} \int_{B(x; r_k)} \left| f(T_g y) - f(T_g x) \right| \mathrm{d} \mu(y) \\
			\leq	& \frac{1}{|F_k|} \sum_{g \in F_k \setminus A_k} \frac{1}{\mu(B(x; r_k))} \int_{B(x; r_k)} \epsilon \mathrm{d} \mu(y) \\
			=	&\frac{|F_k| - |A_k|}{|F_k|} \epsilon \\
			\leq	& \epsilon .
		\end{align*}
		
		Therefore, if $k \geq K$, then
		$$
		\left|\alpha_{B(x; r_k)} \left( \operatorname{Avg}_{F_k} f \right) - \operatorname{Avg}_{F_k} f(x) \right| \leq \left( 2 \|f\|_{C(X)} + 1 \right) \epsilon .
		$$
		
		Finally, in the case where we have the additional hypothesis that $f$ is $(c, \beta)$-H\"older, we can instead estimate the earlier $(\dagger)$ as
		\begin{align*}
			& \frac{1}{|F_k|} \sum_{g \in F_k} \frac{1}{\mu(B(x; r_k))} \int_{B(x; r_k)} \left| f(T_g y) - f(T_g x) \right| \mathrm{d} \mu(y) \\
			\leq	& \frac{1}{|F_k|} \sum_{g \in F_k} \frac{1}{\mu(B(x; r_k))} \int_{B(x; r_k)} c \cdot \rho(T_g y, T_g x)^\beta \mathrm{d} \mu(y) \\
			\leq	& \frac{1}{|F_k|} \sum_{g \in F_k} \frac{1}{\mu(B(x; r_k))} \int_{B(x; r_k)} c \cdot \left( L(g) \cdot \rho(x, y)^{H(g)} \right)^\beta \mathrm{d} \mu(y) \\
			\leq	& \frac{1}{|F_k|} \sum_{g \in F_k} \frac{1}{\mu(B(x; r_k))} \int_{B(x; r_k)} c \cdot \left( L(g) \cdot r_k^{H(g)} \right)^\beta \mathrm{d} \mu(y) \\
			=	& \frac{1}{|F_k|} \sum_{g \in F_k} \frac{1}{\mu(B(x; r_k))} \int_{B(x; r_k)} c \cdot \left( L(g)^\beta \cdot r_k^{\beta H(g)} \right) \mathrm{d} \mu(y) \\
			=	& \frac{c}{|F_k|} \sum_{g \in F_k} L(g)^\beta \cdot r_k^{\beta H(g)} .
		\end{align*}
	\end{proof}
	
	The upshot of Lemma \ref{Singly local pointwise reduction} is that when we consider temepero-spatial differentiations with respect to balls of radius decaying sufficiently fast centered at a fixed point $x_0$, this temporo-spatial differentiation is equivalent to a pointwise (temporal) ergodic average. On one hand, this means that we can consider certain ``random" temepero-spatial differentiations by appealing to pointwise convergence theorems, as in Corollary \ref{Random TSD's, Lindenstrauss}. On another hand, this means that we can use pathological pointwise ergodic averages to generate pathological temporo-spatial differentiations, as we will see in Section \ref{Specification}.
	
	The following lemma lets us describe temporo-spatial averages over multi-balls in terms of temporo-spatial averages over balls, and will be useful going forward.
	
	\begin{Lem}\label{Phasing for multi-balls}
		Let $\mu$ be a Borel probability measure on $X$, and let \linebreak$x^{(1)}, \ldots, x^{(n)} \in \operatorname{supp}(\mu) ; r^{(1)}, \ldots, r^{(n)} \in (0, 1)$ such that the balls $\left\{ B\left( x^{(h)} ; r^{(h)} \right) \right\}_{h = 1}^n$ are pairwise disjoint. Let $f \in L^1(X, \mu)$. Then
		$$
		\alpha_{B(\bar{x}, \bar{r})}(f) = \sum_{h = 1}^n \frac{\mu\left( B \left( x^{(h)} ; r^{(h)} \right) \right)}{\mu\left( B \left( x^{(1)} ; r^{(1)} \right) \right) + \cdots + \mu\left( B \left( x^{(n)} ; r^{(n)} \right) \right)} \alpha_{B \left( x^{(h)} ; r^{(h)} \right)} (f) .
		$$
	\end{Lem}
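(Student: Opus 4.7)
The plan is to unpack both sides of the claimed identity using the hypothesis that the balls $B(x^{(h)}; r^{(h)})$ are pairwise disjoint, which makes $\mu$ behave additively on the union and makes $f \cdot \mathbb{1}_{B(\bar{x},\bar{r})}$ decompose as a sum of $f \cdot \mathbb{1}_{B(x^{(h)};r^{(h)})}$ without overlap corrections.

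First I would record the two elementary identities
\begin{align*}
\mu \left( B(\bar{x}, \bar{r}) \right) &= \sum_{h=1}^n \mu \left( B(x^{(h)} ; r^{(h)}) \right) , \\
\int_{B(\bar{x}, \bar{r})} f \, \mathrm{d} \mu &= \sum_{h=1}^n \int_{B(x^{(h)} ; r^{(h)})} f \, \mathrm{d} \mu ,
\end{align*}
each of which is immediate from pairwise disjointness together with countable additivity of $\mu$ and linearity of the Lebesgue integral (and $f \in L^1(X, \mu)$ ensures that all the integrals are finite). Before doing so I would note that the hypothesis $x^{(h)} \in \operatorname{supp}(\mu)$ with $r^{(h)} > 0$ gives $\mu(B(x^{(h)} ; r^{(h)})) > 0$ for each $h$, so each $\alpha_{B(x^{(h)} ; r^{(h)})}$ on the right-hand side is well defined, and in particular the total denominator is positive.

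Next I would compute
\begin{align*}
\alpha_{B(\bar{x}, \bar{r})}(f) &= \frac{1}{\mu(B(\bar{x}, \bar{r}))} \int_{B(\bar{x}, \bar{r})} f \, \mathrm{d} \mu \\
&= \frac{1}{\sum_{h=1}^n \mu(B(x^{(h)} ; r^{(h)}))} \sum_{h=1}^n \int_{B(x^{(h)} ; r^{(h)})} f \, \mathrm{d} \mu ,
\end{align*}
and then for each $h$ I would multiply and divide by $\mu(B(x^{(h)} ; r^{(h)}))$ to recognize $\frac{1}{\mu(B(x^{(h)} ; r^{(h)}))} \int_{B(x^{(h)} ; r^{(h)})} f \, \mathrm{d}\mu$ as $\alpha_{B(x^{(h)} ; r^{(h)})}(f)$. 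That produces the claimed weighted-average expression.

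There is no real obstacle here: the lemma is a straightforward bookkeeping identity, and its only content is the observation that on a disjoint union the conditional-expectation functional $\alpha$ is a convex combination of the conditional-expectation functionals over the pieces, with weights equal to the relative measures. The only point to be slightly careful about is invoking pairwise disjointness at the right place so that the additivity of $\mu$ (and of $\int$) can be applied without overcounting; the hypothesis in the statement supplies this directly, so no appeal to Lemma \ref{Multi-ball with distinct centers} or to the rapid-decay shrinking argument is needed.
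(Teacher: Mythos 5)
Your proof is correct and follows essentially the same computation as the paper's: decompose the integral and the measure over the disjoint union, then multiply and divide by each $\mu(B(x^{(h)};r^{(h)}))$ to produce the convex-combination form. The only addition is that you explicitly record the positivity of each $\mu(B(x^{(h)};r^{(h)}))$ (guaranteed by $x^{(h)} \in \operatorname{supp}(\mu)$), which the paper leaves implicit; this is a harmless and reasonable extra sentence.
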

	
	\begin{proof}
		\begin{align*}
			\alpha_{B(\bar{x};, \bar{r})}(f)	& = \frac{1}{\mu(B(\bar{x};, \bar{r}))} \int_{B(\bar{x}; \bar{r})} f \mathrm{d} \mu \\
			& = \sum_{h = 1}^n \frac{1}{\sum_{u = 1}^n \mu \left( B \left( x^{(u)} ; r^{(u)} \right) \right)} \int_{B \left( x^{(h)}; r^{(h)} \right) } f \mathrm{d} \mu \\
			& = \sum_{h = 1}^n \frac{\mu\left( B \left( x^{(h)} ; r^{(h)} \right) \right)}{\sum_{u = 1}^n \mu \left( B \left( x^{(u)} ; r^{(u)} \right) \right)} \frac{1}{\mu \left( B \left( x^{(h)} ; r^{(h)} \right) \right)} \int_{B \left( x^{(h)}; r^{(h)} \right) } f \mathrm{d} \mu \\
			& = \sum_{h = 1}^n \frac{\mu\left( B \left( x^{(h)} ; r^{(h)} \right) \right)}{\mu\left( B \left( x^{(1)} ; r^{(1)} \right) \right) + \cdots + \mu\left( B \left( x^{(n)} ; r^{(n)} \right) \right)} \alpha_{B \left( x^{(h)} ; r^{(h)} \right)} (f)
		\end{align*}
	\end{proof}
	
	\begin{Thm}\label{Multi-local convergence}
		Let $\left( \bar{r}_k \right)_{k = 1}^\infty$ be a sequence that decays $(X, \rho, H, L, \mathbf{F})$-fast, and let $f \in C(X)$. Suppose $\bar{x} = \left( x^{(1)}, \ldots, x^{(n)} \right)$ is an $n$-tuple in $X$ such that
		\begin{align*}
			\lim_{k \to \infty} \operatorname{Avg}_{F_k} f\left( x^{(h)} \right)	& = C	& (\forall h \in \{1, \ldots, n\}) ,
		\end{align*}
		where $C$ is independent of $h$, and let $\mu$ be a Borel probability measure on $X$ for which $x^{(1)}, \ldots, x^{(n)} \in \operatorname{supp}(\mu)$. Then
		$$
		\lim_{k \to \infty} \alpha_{B (\bar{x} ; \bar{r}_k)} \left( \operatorname{Avg}_{F_k} f \right) = C .
		$$
	\end{Thm}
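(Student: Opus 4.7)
The plan is to assemble the theorem as a clean corollary of the two preceding lemmas. The key observation is that once the balls $B(x^{(h)};r_k^{(h)})$ are pairwise disjoint, Lemma \ref{Phasing for multi-balls} writes $\alpha_{B(\bar x;\bar r_k)}(\operatorname{Avg}_{F_k} f)$ as a convex combination of the single-ball functionals $\alpha_{B(x^{(h)};r_k^{(h)})}(\operatorname{Avg}_{F_k} f)$, and by Lemma \ref{Singly local pointwise reduction} each of those approaches $C$. A convex combination of quantities that all converge to the same limit $C$ (with arbitrary convex weights depending on $k$) must also converge to $C$, so the conclusion follows.

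More concretely, I would proceed as follows. First, by Lemma \ref{Multi-ball with distinct centers}, I may rewrite $B(\bar{x};\bar{r}_k)$ using only distinct centers; since the hypotheses $x^{(h)} \in \operatorname{supp}(\mu)$ and $\lim_k \operatorname{Avg}_{F_k} f(x^{(h)}) = C$ pass to any subcollection, and the radii $s_k^{(p)} = \max\{r_k^{(h)} : x^{(h)} = y^{(p)}\}$ still decay $(X,\rho,H,L,\mathbf F)$-fast (as the maximum of finitely many such sequences), there is no loss of generality in assuming $x^{(1)},\ldots,x^{(n)}$ are themselves distinct. Next, since $\lim_{k \to \infty} r_k^{(h)} = 0$ for each $h$, the remark immediately following Definition \ref{Rapid decay} guarantees the balls $\{B(x^{(h)};r_k^{(h)})\}_{h=1}^n$ are pairwise disjoint for all sufficiently large $k$, so Lemma \ref{Phasing for multi-balls} applies and gives
$$
\alpha_{B(\bar{x};\bar{r}_k)}(\operatorname{Avg}_{F_k} f) \;=\; \sum_{h=1}^n w_k^{(h)} \,\alpha_{B(x^{(h)};r_k^{(h)})}(\operatorname{Avg}_{F_k} f),
$$
where $w_k^{(h)} := \mu(B(x^{(h)};r_k^{(h)})) / \sum_{u} \mu(B(x^{(u)};r_k^{(u)}))$ satisfy $w_k^{(h)} \geq 0$ and $\sum_h w_k^{(h)} = 1$.

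Then I apply Lemma \ref{Singly local pointwise reduction} to each center $x^{(h)} \in \operatorname{supp}(\mu)$ with the sequence $(r_k^{(h)})_{k=1}^\infty$ (which decays $(X,\rho,H,L,\mathbf{F})$-fast as a coordinate of $(\bar r_k)$) to conclude
$$
\lim_{k \to \infty}\Bigl(\alpha_{B(x^{(h)};r_k^{(h)})}(\operatorname{Avg}_{F_k} f) - \operatorname{Avg}_{F_k} f(x^{(h)})\Bigr) = 0,
$$
and combine with the hypothesis $\operatorname{Avg}_{F_k} f(x^{(h)}) \to C$ to obtain $\alpha_{B(x^{(h)};r_k^{(h)})}(\operatorname{Avg}_{F_k} f) \to C$ for each $h$. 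Finally, given $\epsilon > 0$, pick $K$ such that for $k \geq K$ and all $h \in \{1,\ldots,n\}$ one has $|\alpha_{B(x^{(h)};r_k^{(h)})}(\operatorname{Avg}_{F_k} f) - C| < \epsilon$; then the triangle inequality applied to the convex combination yields $|\alpha_{B(\bar x;\bar r_k)}(\operatorname{Avg}_{F_k} f) - C| \leq \sum_h w_k^{(h)} \epsilon = \epsilon$, completing the proof.

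There is no real obstacle here, as the heavy lifting is already done by Lemmas \ref{Singly local pointwise reduction} and \ref{Phasing for multi-balls}. The only mild care is in the reduction to distinct centers and in noting that the weights $w_k^{(h)}$ need not converge (the measures $\mu(B(x^{(h)};r_k^{(h)}))$ could oscillate wildly in relative size), which is precisely why the convex-combination estimate is phrased in a weight-free manner.
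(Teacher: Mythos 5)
Your proof is correct, and it reaches the same endpoint as the paper's (reduction to single balls, then Lemma \ref{Singly local pointwise reduction}), but by a slightly different route. The paper does not invoke Lemma \ref{Phasing for multi-balls} in this theorem at all: instead it writes the integral over $B(\bar x;\bar r_k)$ as a sum of integrals over the disjoint balls $B(x^{(h)};r_k^{(h)})$, and then replaces $1/\mu(B(\bar x;\bar r_k))$ with the larger quantity $1/\mu(B(x^{(h)};r_k^{(h)}))$ in each term, yielding the (crude but sufficient) bound $\sum_{h=1}^n \bigl|\alpha_{B(x^{(h)};r_k^{(h)})}(\operatorname{Avg}_{F_k}f-C)\bigr|$, each summand of which tends to $0$. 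You instead invoke Lemma \ref{Phasing for multi-balls} to get the exact convex-combination identity and then note that a convex combination of $n$ quantities each $\epsilon$-close to $C$ is itself $\epsilon$-close to $C$. Your version is arguably tidier (it reuses the phasing lemma and avoids the $n$-fold inflation of the error), while the paper's version is more self-contained. You also supply a detail the paper elides: after collapsing repeated centers via Lemma \ref{Multi-ball with distinct centers}, the new radii $s_k^{(p)}=\max\{r_k^{(h)}:x^{(h)}=y^{(p)}\}$ still decay $(X,\rho,H,L,\mathbf F)$-fast; this is true by a union bound on the bad sets, and it is worth saying. Both proofs are sound.
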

	
	\begin{proof}
		By Lemma \ref{Multi-ball with distinct centers}, we can assume without loss of generality that $x^{(1)}, \ldots, x^{(n)}$ are distinct. Because $r_k^{(h)} \to 0$, we know that for sufficiently large $k$, we'll have
		$$B \left( \bar{x} ; \bar{r}_k \right) = \bigsqcup_{h = 1}^n B\left( x^{(h)} ; r_k^{(h)} \right) , $$
		where $B \left( x ; r \right) : = \left\{ y \in X : \rho(x, y) < r \right\}$ is the open ball with center $x$ and radius $r$, and $\sqcup$ denotes disjoint union. We therefore estimate that
		\begin{align*}
			& \left| \alpha_{B(\bar{x} ; \bar{r}_k)} \left( \operatorname{Avg}_{F_k} f \right) - C \right| \\
			=	& \left| \frac{1}{\mu(B(\bar{x} ; \bar{r}_k))} \int_{B(\bar{x} ; \bar{r}_k)} \frac{1}{|F_k|} \sum_{g \in F_k} \left( f(T_g y) - C \right) \mathrm{d} \mu(y) \right| \\
			=	& \left| \frac{1}{\mu(B(\bar{x} ; \bar{r}_k))} \sum_{h = 1}^n \int_{B\left( x^{(h)} ; r_k^{(h)} \right)} \frac{1}{|F_k|} \sum_{g \in F_k} \left( f(T_g y) - C \right) \mathrm{d} \mu(y) \right| \\
			\leq	& \sum_{h = 1}^n \left| \frac{1}{\mu(B(\bar{x}; \bar{r}_k))} \int_{B\left( x^{(h)}; r_k^{(h)} \right)} \frac{1}{|F_k|} \sum_{g \in F_k} \left( f(T_g y) - C \right) \mathrm{d} \mu(y) \right| \\
			\leq	& \sum_{h = 1}^n \left| \frac{1}{\mu\left(B\left(x^{(h)}; r_k^{(h)}\right) \right)} \int_{B\left( x^{(h)}; r_k^{(h)} \right)} \frac{1}{|F_k|} \sum_{g \in F_k} \left( f(T_g y) - C \right) \mathrm{d} \mu(y) \right| \\
			=	& \sum_{h = 1}^n \left| \alpha_{B \left( x^{(h)}; r_k^{(h)} \right)} \left( \operatorname{Avg}_{F_k} f - C \right) \right| \\
			\leq	& \sum_{h = 1}^n \left( \left| \alpha_{B \left( x^{(h)}; r_k^{(h)} \right)} \left( \operatorname{Avg}_{F_k} f - \operatorname{Avg}_{F_k} (x) \right) \right| + \left| \alpha_{B \left( x^{(h)}; r_k^{(h)} \right)} \left( \operatorname{Avg}_{F_k} f(x) - C \right) \right| \right) \\
			=	& \sum_{h = 1}^n \left( \left| \alpha_{B \left( x^{(h)}; r_k^{(h)} \right)} \left( \operatorname{Avg}_{F_k} f - \operatorname{Avg}_{F_k} \left( x^{(h)} \right) \right) \right| + \left| \operatorname{Avg}_{F_k} f\left(x^{(h)}\right) - C \right| \right) \\
			\stackrel{k \to \infty}{\to}	& 0 ,
		\end{align*}
		where the limit in the last line follows from Lemma \ref{Singly local pointwise reduction}.
	\end{proof}
	
	We recall here the following definition.
	
	\begin{Def}
		Let $(F_k)_{k = 1}^\infty$ be a sequence of nonempty finite subsets of a group $G$. We say that $(F_k)_{k = 1}^\infty$ is \emph{tempered} if there exists a constant $c > 0$ such that
		\begin{align*}
			\left| \bigcup_{j = 1}^{k - 1} F_j^{-1} F_k \right|	& \leq c |F_k|	& (\forall k \geq 2) .
		\end{align*}
	\end{Def}
	
	\begin{Lem}\label{Tempered subsequence}
		Every \Folner \space sequence $(F_k)_{k = 1}^\infty$ has a tempered subsequence. In particular, every amenable group admits a tempered \Folner \space sequence.
	\end{Lem}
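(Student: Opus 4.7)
The plan is to build the tempered subsequence by a greedy recursion, leveraging a single consequence of the \Folner{} condition: for every fixed finite set $A \subseteq G$, one has $|A F_k| / |F_k| \to 1$ as $k \to \infty$. This follows from the subadditive estimate
$$\frac{|A F_k \setminus F_k|}{|F_k|} \leq \sum_{g \in A} \frac{|g F_k \setminus F_k|}{|F_k|} \leq \sum_{g \in A} \frac{|F_k \Delta g F_k|}{|F_k|},$$
each term on the right going to $0$ by hypothesis, which in particular yields $|A F_k| \leq 2 |F_k|$ for all sufficiently large $k$ (depending on $A$).

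First I would set $k_1 := 1$. Given $k_1 < k_2 < \cdots < k_j$, I would form the finite set $E_j := \bigcup_{i = 1}^{j} F_{k_i}^{-1}$, apply the observation above to $A = E_j$, and choose $k_{j+1} > k_j$ large enough that $|E_j F_{k_{j+1}}| \leq 2 |F_{k_{j+1}}|$. Since $\bigcup_{i = 1}^{j - 1} F_{k_i}^{-1} F_{k_j} = E_{j-1} F_{k_j}$, this recursive choice automatically delivers the tempered inequality with the uniform constant $c = 2$ for every $j \geq 2$. Any subsequence of a \Folner{} sequence is itself \Folner, so $(F_{k_j})_{j=1}^\infty$ is a tempered \Folner{} sequence. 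The second statement of the lemma is then immediate: by definition an amenable group admits some \Folner{} sequence, and the construction above turns it into a tempered one.

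No step here presents a genuine obstacle. The only point that requires care is to ensure that the tempered constant is fixed in advance of the recursion rather than being allowed to grow with $j$. This is painlessly arranged because the \Folner{} bound controlling $|E_j F_k| / |F_k|$ depends only on the finite set $E_j$ fixed at stage $j$, so we can always locate $k_{j+1}$ past the threshold where this ratio falls below any prescribed uniform constant, $2$ being a convenient choice.
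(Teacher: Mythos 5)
Your proof is correct. The paper itself does not supply an argument here but merely cites Lindenstrauss's Proposition 1.4, and the greedy recursion you give (fix $E_j = \bigcup_{i \leq j} F_{k_i}^{-1}$, invoke the \Folner{} property to pick $k_{j+1}$ so that $|E_j F_{k_{j+1}}| \leq 2|F_{k_{j+1}}|$, note that this is exactly the tempered bound with $c = 2$) is in substance the same argument that appears in the cited source, so there is nothing methodologically different to flag.

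Two small points worth keeping explicit in a written-up version. First, the subadditivity step $|A F_k \setminus F_k| \leq \sum_{g \in A} |g F_k \setminus F_k|$ rests on the identity $A F_k = \bigcup_{g \in A} g F_k$, and it is this union structure that makes the \Folner{} hypothesis (which only controls individual translates $g F_k$) enough to handle arbitrary finite prefixes $E_j$. Second, as you note, the constant $c = 2$ must be fixed before the recursion begins; the construction delivers this because at each stage the set $E_j$ being controlled is already determined by the previous choices, so the threshold index past which $|E_j F_k| \leq 2|F_k|$ depends only on data available at stage $j$. With those two points made explicit the argument is complete.
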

	
	\begin{proof}
		\cite[Proposition 1.4]{LindenstraussErgodicTheorem}
	\end{proof}
	
	The existence of tempered subsequences will be relevant to us in later sections.
	
	\begin{Cor}\label{Random TSD's, Lindenstrauss}
		Suppose $G$ is an amenable group, and $\mathbf{F}$ is a tempered \Folner \space sequence. Suppose further that $\mu$ is a Borel probability measure on $X$ that is $T$-invariant and ergdic. Then for almost all $\bar{x} \in X^n$, we have for all $f \in C(X)$ and all sequences $(\bar{r}_k)_{k = 1}^\infty$ that decay $(X, \rho, H, L, \mathbf{F})$-fast that
		$$
		\lim_{k \to \infty} \alpha_{B(\bar{x}; \bar{r}_k)} \left( \operatorname{Avg}_{F_k} f \right)	= \int f \mathrm{d} \mu .
		$$
	\end{Cor}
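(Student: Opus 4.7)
The plan is to combine the Lindenstrauss pointwise ergodic theorem with Theorem \ref{Multi-local convergence}. First I would invoke Lindenstrauss: for each fixed $f \in C(X)$, since $G$ is amenable, $\mathbf{F}$ is a tempered \Folner{} sequence, and $\mu$ is $T$-invariant and ergodic, there exists a full-measure set $X_f \subseteq X$ on which $\operatorname{Avg}_{F_k} f(x) \to \int f \, \mathrm{d}\mu$. The goal of this first stage is to upgrade this to a \emph{single} full-measure set on which convergence holds for every $f \in C(X)$.

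Because $(X, \rho)$ is compact metric, $C(X)$ is separable; fix a countable dense subset $\{f_m\}_{m=1}^\infty$ and let $X_0 := \operatorname{supp}(\mu) \cap \bigcap_{m=1}^\infty X_{f_m}$, which satisfies $\mu(X_0) = 1$. A standard $\epsilon/3$-argument, leveraging the contraction $\left\| \operatorname{Avg}_{F_k} f - \operatorname{Avg}_{F_k} g \right\|_{C(X)} \leq \|f - g\|_{C(X)}$ together with $\left| \int f \, \mathrm{d}\mu - \int g \, \mathrm{d}\mu \right| \leq \|f-g\|_{C(X)}$, then shows that for every $x \in X_0$ and every $f \in C(X)$, $\operatorname{Avg}_{F_k} f(x) \to \int f \, \mathrm{d}\mu$.

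Next I would invoke Fubini to see that $X_0^n$ has full $\mu^n$-measure in $X^n$. For any $\bar{x} = \left( x^{(1)}, \ldots, x^{(n)} \right) \in X_0^n$, every coordinate lies in $\operatorname{supp}(\mu)$, and for each $f \in C(X)$ we have
$$
\lim_{k \to \infty} \operatorname{Avg}_{F_k} f \left( x^{(h)} \right) = \int f \, \mathrm{d}\mu \qquad (\forall h \in \{1, \ldots, n\}),
$$
with the limit independent of $h$. The hypotheses of Theorem \ref{Multi-local convergence} are thus satisfied with $C = \int f \, \mathrm{d}\mu$, and the theorem delivers $\alpha_{B(\bar{x} ; \bar{r}_k)}\left( \operatorname{Avg}_{F_k} f \right) \to \int f \, \mathrm{d}\mu$ for every sequence $\left( \bar{r}_k \right)_{k=1}^\infty$ decaying $(X, \rho, H, L, \mathbf{F})$-fast.

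The only subtle point I anticipate is the order of quantifiers: the full-measure set $X_0$ must be chosen \emph{before} quantifying over $f$ and $(\bar{r}_k)$. The separability-plus-density argument in the second paragraph handles the quantifier over $f$, while the quantifier over $(\bar{r}_k)$ is handled automatically because Theorem \ref{Multi-local convergence} applies uniformly to any such sequence given that $x^{(1)}, \ldots, x^{(n)} \in X_0$. Everything else is routine.
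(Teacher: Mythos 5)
Your proof is correct and reaches the same conclusion as the paper, but the two arguments distribute the density/approximation step differently. The paper applies Theorem \ref{Multi-local convergence} only to the countable dense family $\{f_\ell\}$, and then runs the $\epsilon/3$ argument at the level of the temporo-spatial averages: it controls $\left| \int f \, \mathrm{d}\mu - \alpha_{B(\bar{x};\bar{r}_k)}(\operatorname{Avg}_{F_k} f) \right|$ by comparing $f$ to a nearby $f_\ell$ in both the measure-integral and the $\alpha_{B(\bar{x};\bar{r}_k)}$ functional, using that each $\alpha_E \circ \operatorname{Avg}_{F_k}$ is a probability measure and hence a contraction on $C(X)$. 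You instead run the $\epsilon/3$ argument one level earlier, at the pointwise Birkhoff average, upgrading the full-measure set $X_0$ to one on which $\operatorname{Avg}_{F_k} f(x) \to \int f\,\mathrm{d}\mu$ for \emph{all} $f \in C(X)$ (the set of generic points), and only then invoke Theorem \ref{Multi-local convergence} directly for each $f$. Both routes use the identical contraction estimate, so neither is materially harder, but your ordering is arguably cleaner because it isolates a standard fact about generic points before any of the multi-ball machinery enters; it also means Theorem \ref{Multi-local convergence} is invoked as a black box rather than interleaved with an approximation. One small point in your favor: you explicitly intersect $X_0$ with $\operatorname{supp}(\mu)$, which is needed for the hypotheses of Theorem \ref{Multi-local convergence} (via Lemma \ref{Singly local pointwise reduction}) and which the paper's proof leaves implicit.
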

	
	\begin{proof}
		Since $X$ is compact metrizable, it follows that $C(X)$ is separable, so let $\{f_\ell\}_{\ell \in \mathbb{N}}$ be a countable dense subset of $C(X)$. For each $\ell \in \mathbb{N}$, set
		$$
		X_\ell = \left\{ x \in X : \operatorname{Avg}_{F_k} f_\ell(x) = \int f_\ell \mathrm{d} \mu \right\} .
		$$
		By the Lindenstrauss ergodic theorem \cite[Theorem 3.3]{LindenstraussErgodicTheorem}, each of these sets $X_\ell$ has full probability, and so $X' = \bigcap_{\ell \in \mathbb{N}} X_\ell$ also has full probability. Thus $\left( X' \right)^n$ is of full probability in $X^n$ with respect to the product measure $\underbrace{\mu \times \cdots \times \mu}_{n}$.
		
		Let $\bar{x} \in \left( X' \right)^n$, and let $(\bar{r}_k)_{k = 1}^\infty$ be a sequence of $n$-tuples of positive numbers that decay $(X, \rho, H, L, \mathbf{F})$-fast. By Theorem \ref{Multi-local convergence}, we know that $\lim_{k \to \infty} \alpha_{B(\bar{x}; \bar{r}_k)} \left( \operatorname{Avg}_{F_k} f_\ell \right) = \int f_\ell \mathrm{d} \mu$ for all $\ell \in \mathbb{N}$. Now it remains to prove that this convergence occurs for all $f \in C(X)$.
		
		Let $f \in C(X)$, and fix $\epsilon > 0$. Choose $f_\ell$ such that $\|f - f_\ell \|_{C(X)} \leq \epsilon$. Then
		\begin{align*}
			& \left| \int f \mathrm{d} \mu - \alpha_{B(\bar{x}; \bar{r}_k)} \left( \operatorname{Avg}_{F_k} f \right) \right| \\
			\leq	& \left| \int f \mathrm{d} \mu - \int f_\ell \mathrm{d} \mu \right| + \left| \int f_\ell \mathrm{d} \mu - \alpha_{B(\bar{x}; \bar{r}_k)} \left( \operatorname{Avg}_{F_k} f_\ell \right) \right| + \left| \alpha_{B(\bar{x}, \bar{r}_k)} \left( \operatorname{Avg}_{F_k} (f_\ell - f) \right) \right| \\
			\leq	& \left\| f - f_\ell \right\|_{C(X)} + \left| \int f_\ell \mathrm{d} \mu - \alpha_{B(\bar{x}; \bar{r}_k)} \left( \operatorname{Avg}_{F_k} f_\ell \right) \right| + \|f - f_\ell\|_{C(X)} \\
			\leq	& 2 \epsilon + \left| \int f_\ell \mathrm{d} \mu - \alpha_{B(\bar{x}; \bar{r}_k)} \left( \operatorname{Avg}_{F_k} f_\ell \right) \right| .
		\end{align*}
		Now choose $K \in \mathbb{N}$ such that if $k \geq K$, then $\left| \int f_\ell \mathrm{d} \mu - \alpha_{B(\bar{x}; \bar{r}_k)} \left( \operatorname{Avg}_{F_k} f_\ell \right) \right| \leq \epsilon$. Then for $k \geq K$, we have that
		$$\left| \int f \mathrm{d} \mu - \alpha_{B(\bar{x}; \bar{r}_k)} \left( \operatorname{Avg}_{F_k} f \right) \right| \leq 3 \epsilon .$$
		This demonstrates the convergence.
	\end{proof}
	
	Theorem \ref{Multi-local convergence} tells us that if we look at a sequence of concentric multiballs with rapidly vanishing radii, and if the pointwise Birkhoff averages at the centers converge to the same limit, then the temporo-spatial average with respect to these sequences of multiballs will inherit the limiting behavior f the pointwise Birkhoff averages. We might wonder whether Theorem \ref{Multi-local convergence} could be generalized by replacing the assumption that $\lim_{k \to \infty} \operatorname{Avg}_{F_k} f \left( x^{(h)} \right) = C$ with $\limsup_{k \to \infty} \operatorname{Avg}_{F_k} f \left( x^{(h)} \right) = C$, assuming of course that $f$ was real-valued. It turns out this generalization fails, as the next example demonstrates. 
	
	\begin{Ex}\label{Can't take lim sups}
		Let $X = \{0, 1\}^\mathbb{N}$, and let $\mu$ be the Borel probability measure on $X$ generated by
		$$\mu \left( [a_1, \ldots, a_\ell] \right) = 2^{-\ell}$$
		for all $a_1, \ldots, a_\ell \in \{0, 1\}, \ell \in \mathbb{N}$, where $[a_1, \ldots, a_\ell] = \left\{ x \in X : x(1) = a_1, \ldots, x(\ell) = a_\ell \right\}$. Let $T_j : \mathbb{N}_0 \curvearrowright X$ be the left shift $(Tx)(i) = x(i + j)$, where $\mathbb{N}_0$ denotes the semigroup of nonnegative integers, making $(X, \mu, T)$ a one-sided Bernoulli shift. Equip $X$ with the compatible metric
		$$\rho(x, y) = \begin{cases}
			0	& \textrm{if $x = y$}, \\
			2^{-\ell}	& \textrm{if $\ell = \min \{ i \in \mathbb{N} : x(i) \neq y(i) \}$} .
		\end{cases}$$
		Then $B\left(x; 2^{-k} \right) = [x(1), \ldots, x(k)]$, and $T_k$ is $2^{k}$-Lipschitz, i.e. $\rho \left( T_k x, T_k y \right) \leq 2^k \cdot \rho(x, y)$. Set $L(j) = 2^j, H(j) = 1$. We can check that $\left( 2^{-k} \right)_{k = 1}^\infty$ decays $(X, \rho, H, L, \mathbf{F})$-fast for $\mathbf{F} = (\{0,1 , \ldots, k - 1\})_{k = 1}^\infty$ by observing that for any $\delta > 0$, if $2^{j - k} \geq \delta$ for $\delta \in (0, 1), 0 \leq j \leq k - 1$, then $j - k \geq \log_2 \delta \iff j \geq k + \log_2 \delta$. Therefore $L(j) \cdot \left( 2^{-k} \right)^{H(j)} < \delta$ for all but at most $\lceil |\log_2 \delta| \rceil$ of $j \in \{0, 1, \ldots, k - 1\}$, so
		$$
		\frac{\left| \left\{ j \in F_k : L(j) \cdot \left( 2^{-k} \right)^{H(j)} \geq \delta \right\} \right|}{|F_k|} \leq \frac{\lceil |\log_2 \delta| \rceil}{k} \stackrel{k \to \infty}{\to} 0 .
		$$
		
		Let $(c_n)_{n = 1}^\infty$ be a sequence of natural numbers chosen to grow fast enough that
		\begin{align*}
			\frac{c_{n}}{c_1 + \cdots + c_n}	& \geq \frac{n - 1}{n}	& (\forall n \in \mathbb{N}) .
		\end{align*}
		Set $s_n = c_1 + \cdots + c_n$, so our growth condition states that $\frac{c_n}{s_n} \geq \frac{n - 1}{n}$. Now construct $x \in X$ by
		$$
		x(i) = \begin{cases}
			0	& 1 \leq i \leq s_1 \\
			1	& s_1 < i \leq s_2 , \\
			0	& s_2 < i \leq s_3 , \\
			\cdots \\
			0	& s_{2n} < i \leq s_{2n + 1} \\
			1	& s_{2n + 1} < i \leq s_{2n + 2} . \\
			\cdots
		\end{cases}
		$$
		In plain language, this $x$ consists of $c_1$ terms of $0$, then $c_2$ terms of $1$, then $c_3$ terms of $0$, then $c_4$ terms of $1$, etc. We then define $y \in X$ by
		\begin{align*}
			y(i)	& = 1 - x(i)	& (\forall i \in \mathbb{N}) ,
		\end{align*}
		i.e. replacing all $0$'s with $1$'s and vice-versa. Set $f = \chi_{[0]}$. We claim that $\limsup_{k \to \infty} \operatorname{Avg}_{F_k} f(x) = \limsup_{k \to \infty} \operatorname{Avg}_{F_k}(y) = 1$.
		
		Consider the case where we sample along $\left(s_{2n - 1} \right)_{n = 1}^\infty$. Then
		$$
		\operatorname{Avg}_{F_{s_{2n - 1}}} f(x) = \frac{c_1 + c_3 + c_5 + \cdots + c_{2n - 1}}{c_1 + c_2 + c_3 + c_4 + c_5 + c_6 + \cdots + c_{2n - 1}} \geq \frac{c_{2n - 1}}{s_{2n - 1}} \geq \frac{2n - 2}{2n - 1} \stackrel{n \to \infty}{\to} 1 .
		$$
		But $\operatorname{Avg}_{F_k} f(z) \in [0, 1]$ for all $k \in \mathbb{N}, z \in X$, so we can conclude that $\limsup_{k \to \infty} \operatorname{Avg}_{F_k} f(x) = 1$. Likewise, sampling along $s_{2n}$, we see that
		$$
		\operatorname{Avg}_{F_{s_{2n}}} f(y) = \frac{c_2 + c_4 + \cdots + c_{2n}}{s_{2n}} \geq \frac{c_{2n}}{s_{2n}} \geq \frac{2n - 1}{2n} \stackrel{n \to \infty}{\to} 1 .	
		$$
		Thus $\limsup_{k \to \infty} \operatorname{Avg}_{F_k}(y) = 1$.
		
		Computing the temporo-spatial averages, we can see that
		\begin{align*}
			& \alpha_{B\left(x, y; 2^{-k}, 2^{-k} \right)} \left( \operatorname{Avg}_{F_k} f \right) \\
			=	& \frac{1}{\mu \left( B\left(x, y; 2^{-k}, 2^{-k} \right) \right)} \int_{B\left(x, y; 2^{-k}, 2^{-k} \right)} \frac{1}{k} \sum_{j = 0}^{k - 1} \chi_{[0]}(T_j z) \mathrm{d} \mu(z) \\
			=	& \frac{1}{2^{-k} + 2^{-k}} \frac{1}{k} \sum_{j = 0}^{k - 1} \chi_{[0]}(T_j z) \mathrm{d} \mu(z) \\
			=	& 2^{k - 1} \int_{[x(1), \ldots, x(k)] \sqcup [y(1), \ldots, y(k)]} \frac{1}{k} \sum_{j = 0}^{k - 1} \chi_{[0]}(T_j z) \mathrm{d} \mu(z) \\
			= & 2^{k - 1} \int \frac{1}{k} \sum_{j = 0}^{k - 1} \left( \chi_{[x(1), \ldots, x(k)]}(z) + \chi_{[y(1), \ldots, y(k)]}(z) \right) \chi_{[0]}(T_j z) \mathrm{d} \mu(z) \\
			=	& 2^{k - 1} \int \frac{1}{k} \sum_{j = 0}^{k - 1} \left( \chi_{[x(1), \ldots, x(k)]}(z) + \chi_{[y(1), \ldots, y(k)]}(z) \right) \chi_{T^{-j} [0]}(z) \mathrm{d} \mu(z) \\
			=	& 2^{k - 1} \int \frac{1}{k} \sum_{j = 0}^{k - 1} \left( \chi_{[x(1), \ldots, x(k)] \cap T^{-j}[0]}(z) + \chi_{[y(1), \ldots, y(k)] \cap T^{-j}[0]}(z) \right)\mathrm{d} \mu(z)
		\end{align*}
		We know that
		\begin{align*}\chi_{[x(1), \ldots, x(k)] \cap T^{-j}[0]}(z)	& = \begin{cases}
				1	& \textrm{if } z(1 + j) = x(1 + j) = 0 , \\
				0	& \textrm{otherwise} ,
			\end{cases} \\
			\chi_{[y(1), \ldots, y(k)] \cap T^{-j}[0]}(z)	& = \begin{cases}
				1	& \textrm{if } z(1 + j) = y(1 + j) = 0 , \\
				0	& \textrm{otherwise} .
			\end{cases}
		\end{align*}
		Thus
		\begin{align*}\int \chi_{[x(1), \ldots, x(k)] \cap T^{-j}[0]}(z) \mathrm{d} \mu(z)	& = \begin{cases}
				2^{-k}	& \textrm{if } x(1 + j) = 0 , \\
				0	& \textrm{if } ,
			\end{cases} \\
			\int \chi_{[y(1), \ldots, y(k)] \cap T^{-j}[0]}(z)	& = \begin{cases}
				2^{-k}	& \textrm{if } z(1 + j) = y(1 + j) = 0 , \\
				0	& \textrm{otherwise} .
			\end{cases}
		\end{align*}
		But since $x(1 + j) = 0 \iff y(i + j) = 1$, it follows that
		$$\int \left( \chi_{[x(1), \ldots, x(k)] \cap T^{-j}[0]}(z) + \chi_{[y(1), \ldots, y(k)] \cap T^{-j}[0]}(z) \right)\mathrm{d} \mu(z) = 2^{-k}$$
		for all $j = 0, 1, \ldots, k - 1$. Therefore
		\begin{align*}
				& \alpha_{B\left(x, y; 2^{-k}, 2^{-k} \right)} \left( \operatorname{Avg}_{F_k} f \right) \\
			=	& 2^{k - 1} \int \frac{1}{k} \sum_{j = 0}^{k - 1} \left( \chi_{[x(1), \ldots, x(k)] \cap T^{-j}[0]}(z) + \chi_{[y(1), \ldots, y(k)] \cap T^{-j}[0]}(z) \right)\mathrm{d} \mu(z) \\
			=	& 2^{k - 1} \frac{1}{k} \sum_{j = 0}^{k - 1} 2^{-k} \\
			=	& \frac{1}{2} .
		\end{align*}
		So $\limsup_{k \to \infty} \alpha_{B\left(x, y; 2^{-k}, 2^{-k} \right)} \left( \operatorname{Avg}_{F_k} f \right) = 1/2 \neq 1$.
	\end{Ex}
	
	\begin{Ex}\label{Give and take example}
		Looking at Theorem \ref{Multi-local convergence}, we could also ask whether the result could be generalized to somehow accommodate the case where $\lim_{k \to \infty} \operatorname{Avg}_{F_k} f\left( x^{(h)} \right)$ exists for all $h = 1, \ldots, n$, but is allowed to vary with $h$. However, we can construct examples of points $x, y \in X$, sequences of radii $(r_k)_{k = 1}^\infty, (s_k)_{k = 1}^\infty \in (0, 1)^\mathbb{N}$ decaying $(X, \rho, H, L, \mathbf{F})$-fast, and a function $f \in C(X)$ where $\lim_{k \to \infty} \operatorname{Avg}_{F_k} f \left( x \right), \lim_{k \to \infty} \operatorname{Avg}_{F_k} f\left( y \right)$ both exist, but $\lim_{k \to \infty} \alpha_{B\left(x, y; 2^{-k}, 2^{-k} \right)} \left( \operatorname{Avg}_{F_k} f \right)$ does not. Let $X, \rho, T, \mu, \mathbf{F}$ be as in Example \ref{Can't take lim sups}, but choose $x, y$ to be
		\begin{align*}
			x(i)	& = \begin{cases}
				0	& \textrm{if $i$ is even}, \\
				1	& \textrm{if $i$ is odd},
			\end{cases}	&
			y(i)	& = \begin{cases}
				0	& \textrm{if $i$ is divisible by $3$}, \\
				1	& \textrm{otherwise.}
			\end{cases}
		\end{align*}
		Let $f = \chi_{[0]}$. Then $\lim_{k \to \infty} \operatorname{Avg}_{F_k} f(x) = 1/2, \lim_{k \to \infty} \operatorname{Avg}_{F_k} f(y) = 1/3$. Construct sequences of natural numbers $(p_k)_{k = 1}^\infty, (q_k)_{k = 1}^\infty$ strictly increasing such that
		\begin{align*}
			\frac{2^{-p_k}}{2^{-p_k} + 2^{-q_k}}	& \geq \frac{k}{k + 1}	& (\textrm{for $k$ odd}) , \\
			\frac{2^{-q_k}}{2^{-p_k} + 2^{-q_k}}	& \geq \frac{k}{k + 1}	& (\textrm{for $k$ even}) .
		\end{align*}
		Therefore
		$$\lim_{n \to \infty} \frac{2^{-p_{2n - 1}}}{2^{-p_{2n - 1}} + 2^{-q_{2n - 1}}} = \frac{2^{-q_{2n}}}{2^{-p_{2n}} + 2^{-q_{2n}}} = 1 .$$
		Set $r_k = 2^{-p_k}, s_k = 2^{-q_k}$. We can see that $(r_k, s_k)_{k = 1}^\infty$ decays $(X, \rho, H, L, \mathbf{F})$-fast. By Lemma \ref{Phasing for multi-balls}, we have
		$$
		\alpha_{B(x, y; r_k, s_k)} \left( \operatorname{Avg}_{F_k} f \right) = \frac{2^{-p_k}}{2^{-p_k} + 2^{-q_k}} \alpha_{B(x; r_k)} \left( \operatorname{Avg}_{F_k} f \right) + \frac{2^{-q_k}}{2^{-p_k} + 2^{-q_k}} \alpha_{B(y; s_k)} \left( \operatorname{Avg}_{F_k} f \right) .
		$$
		Sampling along even $k$, we see that
		\begin{align*}
			& \lim_{n \to \infty} \alpha_{B(x, y; r_{2n}, s_{2n})} \left( \operatorname{Avg}_{F_{2n}} f \right)	\\
			=	& \lim_{n \to \infty} \frac{2^{-p_{2n}}}{2^{-p_{2n}} + 2^{-q_{2n}}} \alpha_{B(x; r_{2n})} \left( \operatorname{Avg}_{F_{2n}} f \right) + \frac{2^{-q_{2n}}}{2^{-p_{2n}} + 2^{-q_{2n}}} \alpha_{B(y; s_{2n})} \left( \operatorname{Avg}_{F_{2n}} f \right) \\
			=	& 0 \left( \lim_{n \to \infty} \alpha_{B(x; r_{2n})} \left( \operatorname{Avg}_{F_{2n}} f \right) \right) + 1 \left( \lim_{n \to \infty} \alpha_{B(y; s_{2n})} \left( \operatorname{Avg}_{F_{2n}} f \right) \right) \\
			=	& \frac{1}{3} ,
		\end{align*}
		where the limits in the last step are taken using Lemma \ref{Singly local pointwise reduction}. On the other hand, sampling along odd $k$, we see that
		\begin{align*}
			& \lim_{n \to \infty} \alpha_{B(x, y; r_{2n - 1}, s_{2n - 1})} \left( \operatorname{Avg}_{F_{2n - 1}} f \right) \\
			=	& \lim_{n \to \infty} \frac{2^{-p_{2n - 1}}}{2^{-p_{2n - 1}} + 2^{-q_{2n - 1}}} \alpha_{B(x; r_{2n - 1})} \left( \operatorname{Avg}_{F_{2n - 1}} f \right) \\
			& + \lim_{n \to \infty} \frac{2^{-q_{2n - 1}}}{2^{-p_{2n - 1}} + 2^{-q_{2n - 1}}} \alpha_{B(y; s_{2n - 1})} \left( \operatorname{Avg}_{F_{2n - 1}} f \right) \\
			=	&  1 \left( \lim_{n \to \infty} \alpha_{B(x; r_{2n - 1})} \left( \operatorname{Avg}_{F_{2n - 1}} f \right) \right) + 0 \left( \lim_{n \to \infty} \alpha_{B(y; s_{2n - 1})} \left( \operatorname{Avg}_{F_{2n - 1}} f \right) \right) \\
			=	& \frac{1}{2},
		\end{align*}
		where we again appeal to Lemma \ref{Singly local pointwise reduction} to take the limits at the end. Thus the sequence
		$$\left( \alpha_{B(x, y; r_k, s_k)} \left( \operatorname{Avg}_{F_k} f \right) \right)_{k = 1}^\infty$$
		is divergent.
	\end{Ex}
	
	The argument employed in Example \ref{Give and take example}, where we control the ``weight" we give several points at different points in the temporo-spatial differentiation, will have applications in Sections \ref{Phasing problems} and \ref{Chasing measures}. However, the following result also demonstrates that absent such tricks, we have predictable convergence behaviors.
	
	\begin{Thm}\label{Blending different limits}
		Let $\left( \bar{r}_k \right)_{k = 1}^\infty$ be a sequence that decays $(X, \rho, H, L, \mathbf{F})$-fast, and let $f \in C(X)$. Suppose $\bar{x} = \left( x^{(1)}, \ldots, x^{(n)} \right)$ is an $n$-tuple in $X$ such that
		$$C_h = \lim_{k \to \infty} \operatorname{Avg}_{F_k} f\left( x^{(h)} \right)$$
		exists for all $h = 1, \ldots, n$. Let $\mu$ be a Borel probability measure on $X$ for which $x^{(1)}, \ldots, x^{(n)} \in \operatorname{supp}(\mu)$. Suppose further that
		$$
		D_h = \lim_{k \to \infty} \frac{\mu\left( B \left( x^{(h)} ; r_k^{(h)} \right) \right)}{\mu\left( B \left( x^{(1)} ; r_k^{(1)} \right) \right) + \cdots + \mu\left( B \left( x^{(n)} ; r_k^{(n)} \right) \right)}
		$$
		exists for all $h = 1, \ldots, n$.
		Then
		$$
		\lim_{k \to \infty} \alpha_{B (\bar{x} ; \bar{r}_k)} \left( \operatorname{Avg}_{F_k} f \right) =\sum_{h = 1}^n D_h C_h.
		$$
	\end{Thm}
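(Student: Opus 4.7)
The plan is to combine the three preceding lemmas essentially as modular components, once we set up the disjointness of the constituent balls. First, by Lemma \ref{Multi-ball with distinct centers}, I can replace the tuple $\bar{x}$ by one in which the points are distinct (absorbing any repeated center into the representative with the largest radius, which clearly preserves both the hypotheses on $C_h, D_h$ and the quantity being analyzed). Second, since $r_k^{(h)} \to 0$ for each $h$ and the (now distinct) points $x^{(1)}, \ldots, x^{(n)}$ are at positive pairwise distance, there exists $K$ such that for all $k \geq K$ the balls $B(x^{(h)}; r_k^{(h)})$ are pairwise disjoint, making $B(\bar{x}; \bar{r}_k)$ a genuine disjoint union.

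With disjointness in hand, I apply Lemma \ref{Phasing for multi-balls} to the function $\operatorname{Avg}_{F_k} f \in C(X)$ (noting that each $x^{(h)} \in \operatorname{supp}(\mu)$ so the denominators are positive) to obtain the convex decomposition
\begin{equation*}
\alpha_{B(\bar{x}; \bar{r}_k)}\left(\operatorname{Avg}_{F_k} f\right) = \sum_{h = 1}^n w_k^{(h)} \cdot \alpha_{B(x^{(h)}; r_k^{(h)})}\left(\operatorname{Avg}_{F_k} f\right),
\end{equation*}
where $w_k^{(h)} = \mu(B(x^{(h)}; r_k^{(h)})) / \sum_{u} \mu(B(x^{(u)}; r_k^{(u)}))$. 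By hypothesis, $w_k^{(h)} \to D_h$ as $k \to \infty$.

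Next I invoke Lemma \ref{Singly local pointwise reduction} at each center $x^{(h)}$ with the rapidly decaying radius sequence $(r_k^{(h)})_{k=1}^\infty$ (which inherits $(X, \rho, H, L, \mathbf{F})$-fast decay from the componentwise definition) to deduce
\begin{equation*}
\alpha_{B(x^{(h)}; r_k^{(h)})}\left(\operatorname{Avg}_{F_k} f\right) - \operatorname{Avg}_{F_k} f\left(x^{(h)}\right) \xrightarrow{k \to \infty} 0,
\end{equation*}
and since $\operatorname{Avg}_{F_k} f(x^{(h)}) \to C_h$ by hypothesis, each individual spatial average converges to $C_h$.

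Finally, each factor in the decomposition converges: $w_k^{(h)} \to D_h$ and $\alpha_{B(x^{(h)}; r_k^{(h)})}(\operatorname{Avg}_{F_k} f) \to C_h$, with all weights bounded in $[0,1]$ and all individual averages bounded by $\|f\|_{C(X)}$. The finite sum of products therefore converges to $\sum_{h = 1}^n D_h C_h$, which is the desired conclusion. There is no real obstacle here; the proof is essentially assembly, and the only technical point worth writing carefully is the reduction to pairwise disjoint balls with distinct centers so that Lemma \ref{Phasing for multi-balls} actually applies.
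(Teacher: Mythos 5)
Your proof follows the paper's approach exactly: the paper's own proof reads, in full, ``This follows immediately from Lemmas \ref{Singly local pointwise reduction}, \ref{Phasing for multi-balls},'' and your write-up simply fleshes out that assembly (disjointness of the balls for large $k$, the phasing decomposition with weights $w_k^{(h)} \to D_h$, the singly-local reduction giving $\alpha_{B(x^{(h)};r_k^{(h)})}(\operatorname{Avg}_{F_k}f) \to C_h$, and boundedness to pass the limit through the finite sum of products). The main body is correct.

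One caveat about your opening reduction, though: it is not true that collapsing a repeated center via Lemma \ref{Multi-ball with distinct centers} ``clearly preserves \ldots the quantity being analyzed.'' If, say, $x^{(1)} = x^{(2)} = x$ with $x^{(3)} = y$ distinct and with $\mu(B(x;r_k^{(1)})) = \mu(B(x;r_k^{(2)})) = \mu(B(y;r_k^{(3)}))$ for all $k$, the original hypotheses give $D_1 = D_2 = D_3 = 1/3$ and the formula in the theorem predicts $\tfrac{2}{3}C_x + \tfrac{1}{3}C_y$; but the multi-ball is really just $B(x;r_k^{(1)}) \cup B(y;r_k^{(3)})$, whose temporo-spatial average tends to $\tfrac{1}{2}C_x + \tfrac{1}{2}C_y$. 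The point is that $\mu(B(\bar{x};\bar{r}_k))$ is not $\sum_h \mu(B(x^{(h)};r_k^{(h)}))$ when centers repeat, and the merged weight is governed by a max rather than a sum, so $\sum_h D_h C_h$ is not invariant under the merge. The theorem is therefore only correct under the implicit hypothesis that the $x^{(h)}$ are distinct (which the paper tacitly assumes, since its proof jumps straight to Lemma \ref{Phasing for multi-balls}, whose disjointness hypothesis forces distinct centers). You should either add that hypothesis or drop the merge step and just say the centers are assumed distinct; the rest of your argument then goes through without change.
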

	
	\begin{proof}
		This follows immediately from Lemmas \ref{Singly local pointwise reduction}, \ref{Phasing for multi-balls}.
	\end{proof}
	
	\section{Preliminaries from ergodic optimization}\label{EO}
	
	Here we prove a generalization of a result of O. Jenkinson \cite[Proposition 2.1]{Jenkinson} to the setting of actions of amenable topological groups. Our method of proof closely resembles Jenkinson's, but requires that we attend to a few extra details.
	
	Throughout this section, $T : G \curvearrowright X$ will be an action of a discrete amenable group $G$ on a compact metrizable space $X$ by homeomorphisms, and $f \in C_\mathbb{R}(X)$ will be a real-valued continuous function on $X$. Let $\mathbf{F} = (F_k)_{k = 1}^\infty$ be a \Folner \space sequence for $G$. Define the set $\operatorname{Reg}(f)$ by
	$$\operatorname{Reg}(f) = \left\{ x \in X : \textrm{$\lim_{k \to \infty} \operatorname{Avg}_{F_k} f(x)$ exists} \right\} .$$
	
	We define the following values:
	\begin{align*}
		\overline{a}(f)	& : = \sup \left\{ \int f \mathrm{d} \nu : \nu \in \mathcal{M}_T(X) \right\} ,	\\
		\underline{a}(f)	& : = \inf \left\{ \int f \mathrm{d} \nu : \nu \in \mathcal{M}_T(X) \right\} , \\
		\overline{b}(f)	& : = \sup \left\{ \lim_{k \to \infty} \operatorname{Avg}_{F_k} f(x) : x \in \operatorname{Reg}(f) \right\} ,\\
		\underline{b}(f)	& : = \inf \left\{ \lim_{k \to \infty} \operatorname{Avg}_{F_k} f(x) : x \in \operatorname{Reg}(f) \right\} , \\
		\overline{c}(f)	& : = \sup \left\{ \limsup_{k \to \infty} \operatorname{Avg}_{F_k} f(x) : x \in X \right\} , \\
		\underline{c}(f)	& : = \inf \left\{ \limsup_{k \to \infty} \operatorname{Avg}_{F_k} f(x) : x \in X \right\} , \\
		\overline{d}(f)	& : = \lim_{k \to \infty} \left( \sup \left\{ \operatorname{Avg}_{F_k} f(x) : x \in X \right\} \right) \\
		\underline{d}(f)	& : = \lim_{k \to \infty} \left( \inf \left\{ \operatorname{Avg}_{F_k} f(x) : x \in X \right\} \right) .
	\end{align*}
	We write $\overline{b}(f) = - \infty , \underline{b}(f) = + \infty$ if $\operatorname{Reg}(f) = \emptyset$. We will show in Theorem \ref{Jenkinson's ergodic extrema} that $\underline{d}(f) , \overline{d}(f)$ are well-defined.
	
	The following result is elementary, but will be relevant for much of this article, so we state and prove it here.
	
	\begin{Lem}\label{Invariance}
		Let $(F_k)_{k = 1}^\infty$ be a \Folner \space sequence for a group $G$, and let $(\beta_k)_{k = 1}^\infty$ be a sequence of Borel probability measures on $X$. Then if $k_1 < k_2 < \cdots$ is a sequence of natural numbers such that $\nu = \lim_{\ell \to \infty} \beta_{k_\ell} \circ \operatorname{Avg}_{F_{k_\ell}}$ exists, then $\nu \in \mathcal{M}_T(X)$. In particular, if $G$ is amenable, then $\mathcal{M}_T(X) \neq \emptyset$.
	\end{Lem}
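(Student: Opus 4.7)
The plan is to reduce the $T$-invariance of $\nu$ to the \Folner \space property through the standard \Folner \space swindle. I would fix an arbitrary $g \in G$ and $f \in C(X)$ and aim to prove $\int T_g f \, \mathrm{d}\nu = \int f \, \mathrm{d}\nu$; since $\nu$ is a Borel probability measure and $C(X)$ is dense in $L^1(X, \nu)$, this suffices to establish $\nu \in \mathcal{M}_T(X)$ by passing to the measure-theoretic formulation via Riesz representation.

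By the weak-$*$ convergence $\beta_{k_\ell} \circ \operatorname{Avg}_{F_{k_\ell}} \to \nu$, together with the consistency of the $\operatorname{Avg}$ notations recorded in Section \ref{Notation}, both sides of the target identity can be written as limits: $\int T_g f \, \mathrm{d}\nu = \lim_{\ell \to \infty} \int \operatorname{Avg}_{F_{k_\ell}} (T_g f) \, \mathrm{d}\beta_{k_\ell}$ and $\int f \, \mathrm{d}\nu = \lim_{\ell \to \infty} \int \operatorname{Avg}_{F_{k_\ell}} f \, \mathrm{d}\beta_{k_\ell}$. The task then reduces to controlling the two integrands uniformly along the subsequence.

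The key computation uses the identity $T_h(T_g f) = T_{gh} f$, which follows immediately from $T_g f = f \circ T_g$ and the action law $T_{g_1} \circ T_{g_2} = T_{g_1 g_2}$. From this, $\operatorname{Avg}_F(T_g f) = \frac{1}{|F|} \sum_{h' \in gF} T_{h'} f$, so that after cancellation of the shared indices we obtain the uniform bound $\|\operatorname{Avg}_F(T_g f) - \operatorname{Avg}_F f\|_{C(X)} \leq \frac{|gF \Delta F|}{|F|} \|f\|_{C(X)}$. Specializing $F = F_{k_\ell}$, invoking the \Folner \space condition, and letting $\ell \to \infty$ then yields $\int T_g f \, \mathrm{d}\nu = \int f \, \mathrm{d}\nu$, completing the invariance argument.

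For the ``in particular'' clause, amenability of $G$ supplies a \Folner \space sequence $(F_k)_{k = 1}^\infty$. Picking any $\beta \in \mathcal{M}(X)$ (for instance $\beta = \delta_{x_0}$ for some $x_0 \in X$) and invoking the weak-$*$ compactness of $\mathcal{M}(X)$, which is available since $X$ is compact metrizable, one extracts a weak-$*$ convergent subsequence of $(\beta \circ \operatorname{Avg}_{F_k})_{k = 1}^\infty$; the first part then identifies its limit as an element of $\mathcal{M}_T(X)$. I anticipate no serious obstacle in this proof; the one subtlety worth flagging is the direction of composition, which forces the shift $F \mapsto gF$ and thereby makes the left-\Folner \space condition (as stated in Section \ref{Notation}), rather than a right-\Folner \space condition, the correct hypothesis.
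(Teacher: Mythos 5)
Your proposal is correct and follows essentially the same route as the paper's own proof: rewrite $\int T_g f\,\mathrm{d}\nu$ as a limit of averages over $gF_{k_\ell}$, cancel the common indices against the averages over $F_{k_\ell}$, and bound the difference by $\frac{|gF_{k_\ell} \Delta F_{k_\ell}|}{|F_{k_\ell}|}\|f\|_{C(X)}$, which the left \Folner{} condition sends to zero; the ``in particular'' clause is handled identically via weak-$*$ compactness of $\mathcal{M}(X)$. Your flag about the direction of composition forcing $F \mapsto gF$ and hence the left-\Folner{} hypothesis is exactly right and matches the convention stated in Section~\ref{Notation}.
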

	
	\begin{proof}
		Assume WLoG that $k_\ell = \ell$ for all $\ell \in \mathbb{N}$. Let $f \in C(X) , g \in G$.
		\begin{align*}
			\left| \int f \mathrm{d} \nu - \int T_g f \mathrm{d} \nu \right|	& = \lim_{k \to \infty} \left| \left( \frac{1}{\left|F_{k} \right|} \sum_{h \in F_{k}} \int T_h f \mathrm{d} \beta_k \right) - \left( \frac{1}{\left|F_{k} \right|} \sum_{h' \in g F_{k}} \int T_{h'} f \mathrm{d} \beta_k \right) \right| \\
			& = \lim_{k \to \infty} \frac{1}{|F_k|} \left| \left( \sum_{h \in F_k \setminus g F_k} T_h f \right) - \left( \sum_{h' \in g F_k \setminus F_k} T_{h'} f \right) \right| \\
			& \leq \limsup_{k \to \infty} \frac{|F_k \Delta g F_k|}{|F_k|} \|f\|_{C(X)} \\
			& = 0 .
		\end{align*}
		
		To prove that $\mathcal{M}_T(X) \neq \emptyset$, consider any Borel probability measure $\beta$ on $X$, and use the weak*-compactness of $\mathcal{M}(X)$ to extract a convergent subsequence from $\left( \beta \circ \operatorname{Avg}_{F_k} \right)_{k = 1}^\infty$. The limit of that convergent subsequence will be $T$-invariant.
	\end{proof}
	
	\begin{Def}
		Let $\nu \in \mathcal{M}_T(X)$, and $f \in C(X)$. A point $x \in X$ is called \emph{$(f, \mathbf{F}, \nu)$-typical} if $\lim_{k \to \infty} \operatorname{Avg}_{F_k} f(x) = \int f \mathrm{d} \nu$.
	\end{Def}
	
	\begin{Thm}\label{Jenkinson's ergodic extrema}
		Suppose $f \in C_\mathbb{R} (X)$. Then the values $\overline{a}(f), \underline{a}(f), \overline{c}(f), \underline{c}(f), \overline{d}(f), \underline{d}(f)$ are all well-defined real numbers, and
		\begin{align*}
			\overline{b}(f) \leq \overline{c}(f) = \overline{a}(f)	& = \overline{d}(f) , \\
			\underline{b}(f) \geq \underline{c}(f) = \underline{a}(f)	& = \underline{d}(f) .
		\end{align*}
		Furthermore, if for every ergodic measure $\theta \in \partial_e \mathcal{M}_T(X)$ exists an $(f, \mathbf{F}, \theta)$-typical point, then $$\overline{a}(f) = \overline{b}(f) = \overline{c}(f) = \overline{d}(f), \; \underline{a}(f) = \underline{b}(f) = \underline{c}(f) = \underline{d}(f) .$$
	\end{Thm}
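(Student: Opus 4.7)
The strategy is to prove the cascade of equalities among the upper quantities $\overline{a}(f) = \overline{c}(f) = \overline{d}(f)$ (with $\overline{b}(f) \leq \overline{c}(f)$) directly, and then deduce the lower statements by applying the same argument to $-f$. Well-definedness of $\overline{a}(f), \underline{a}(f), \overline{c}(f), \underline{c}(f)$ is immediate from $\|f\|_{C(X)} < \infty$ together with the nonemptiness of $\mathcal{M}_T(X)$ (Lemma \ref{Invariance}); that $\overline{d}(f), \underline{d}(f)$ are well-defined will emerge as a by-product of identifying the limit.

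For $\overline{d}(f) = \overline{a}(f)$, let $\overline{M}_k := \sup_{x \in X} \operatorname{Avg}_{F_k} f(x)$ (attained by compactness of $X$ and continuity of $\operatorname{Avg}_{F_k} f$). Any $\nu \in \mathcal{M}_T(X)$ satisfies $\int f \mathrm{d}\nu = \int \operatorname{Avg}_{F_k} f \mathrm{d}\nu \leq \overline{M}_k$ by $T$-invariance, so $\liminf_k \overline{M}_k \geq \overline{a}(f)$. Conversely, pick $x_k \in X$ with $\operatorname{Avg}_{F_k} f(x_k) = \overline{M}_k$, pass to a subsequence $(k_\ell)$ along which $\overline{M}_{k_\ell}$ tends to $\limsup_k \overline{M}_k$ and along which $\delta_{x_{k_\ell}} \circ \operatorname{Avg}_{F_{k_\ell}}$ converges weak* (by weak*-compactness of $\mathcal{M}(X)$); Lemma \ref{Invariance} places the limit $\nu$ in $\mathcal{M}_T(X)$, and
\begin{equation*}
\limsup_k \overline{M}_k = \lim_\ell \int f \mathrm{d}\bigl(\delta_{x_{k_\ell}} \circ \operatorname{Avg}_{F_{k_\ell}}\bigr) = \int f \mathrm{d}\nu \leq \overline{a}(f).
\end{equation*}
The conceptual crux here, and the step I expect to require the most care, is the identity $\operatorname{Avg}_{F_k} f(x_k) = \int f \mathrm{d}(\delta_{x_k} \circ \operatorname{Avg}_{F_k})$, which converts a moving-base-point Birkhoff average into the integral of $f$ against a varying probability measure, so that Lemma \ref{Invariance} becomes applicable.

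For the remaining equalities among the upper quantities, $\overline{b}(f) \leq \overline{c}(f)$ follows from the definitions (interpreting $\sup \emptyset = -\infty$ when $\operatorname{Reg}(f) = \emptyset$), $\overline{c}(f) \leq \overline{d}(f) = \overline{a}(f)$ follows from pointwise domination by $\overline{M}_k$, and $\overline{c}(f) \geq \overline{a}(f)$ is obtained by fixing $\nu \in \mathcal{M}_T(X)$ and applying the reverse Fatou lemma (justified by $|\operatorname{Avg}_{F_k} f| \leq \|f\|_{C(X)}$) as
\begin{equation*}
\int f \mathrm{d}\nu = \limsup_k \int \operatorname{Avg}_{F_k} f \mathrm{d}\nu \leq \int \limsup_k \operatorname{Avg}_{F_k} f \mathrm{d}\nu \leq \overline{c}(f),
\end{equation*}
followed by a supremum over $\nu$. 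Finally, for the \emph{furthermore} clause, weak*-compactness of $\mathcal{M}_T(X)$ and weak*-continuity of $\nu \mapsto \int f \mathrm{d}\nu$ produce a maximizer $\nu^*$, whose Choquet representation $\nu^* = \int_{\partial_e \mathcal{M}_T(X)} \theta \mathrm{d}\eta(\theta)$ forces $\eta$-almost every ergodic $\theta$ to satisfy $\int f \mathrm{d}\theta = \overline{a}(f)$ (since $\int f \mathrm{d}\theta \leq \overline{a}(f)$ for every such $\theta$); pick any one such $\theta^*$ and apply the typical-point hypothesis to produce $x \in \operatorname{Reg}(f)$ with $\lim_k \operatorname{Avg}_{F_k} f(x) = \overline{a}(f)$, yielding $\overline{b}(f) \geq \overline{a}(f)$.
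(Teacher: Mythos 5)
Your proof is correct and, in one key place, takes a genuinely different and arguably simpler route than the paper. For the identity $\overline{d}(f) = \overline{a}(f)$ (which gives well-definedness of $\overline{d}(f)$) your argument is the same in spirit as the paper's -- extract a weak*-convergent subsequence of $\delta_{x_k} \circ \operatorname{Avg}_{F_k}$ and invoke Lemma \ref{Invariance} -- though you organize it more cleanly by treating the $\liminf$ and $\limsup$ directions separately. The substantive divergence is the inequality $\overline{a}(f) \leq \overline{c}(f)$: the paper passes to a \emph{tempered} \Folner{} subsequence via Lemma \ref{Tempered subsequence}, applies the Lindenstrauss Ergodic Theorem to produce a point $x$ whose averages along that subsequence converge to $\int f\,\mathrm{d}\theta$, bounds this by $\limsup_k \operatorname{Avg}_{F_k} f(x)$, and finishes with the ergodic decomposition. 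You bypass all of that with a two-line reverse-Fatou argument: $\int f\,\mathrm{d}\nu = \limsup_k \int \operatorname{Avg}_{F_k} f\,\mathrm{d}\nu \leq \int \limsup_k \operatorname{Avg}_{F_k} f\,\mathrm{d}\nu \leq \overline{c}(f)$, valid since $|\operatorname{Avg}_{F_k} f| \leq \|f\|_{C(X)}$ and $\limsup_k \operatorname{Avg}_{F_k} f$ is a Borel function dominated by $\overline{c}(f)$. This avoids the pointwise ergodic theorem and the tempering machinery entirely, which is a genuine simplification. Your treatment of the \emph{furthermore} clause also differs in emphasis: the paper shows $\int f\,\mathrm{d}\theta \leq \overline{b}(f)$ for every ergodic $\theta$ and then integrates over the ergodic decomposition of an arbitrary $\nu$, while you invoke Choquet representation of a maximizing measure to exhibit a single ergodic $\theta^*$ attaining $\overline{a}(f)$ (this is essentially \cite[Proposition 2.4-(iii)]{Jenkinson}, which the paper cites elsewhere) and feed that one measure into the typical-point hypothesis. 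Both are correct; yours is slightly more direct for the single inequality $\overline{a}(f) \leq \overline{b}(f)$, while the paper's version shows the stronger fact that $\overline{b}(f)$ dominates $\int f\,\mathrm{d}\theta$ uniformly over ergodic $\theta$.
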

	
	\begin{proof}
		We will only prove the inequalities and identities for $\overline{a}, \overline{b}, \overline{c}, \overline{d}$, since the analogous relations between $\underline{a}, \underline{b}, \underline{c}, \underline{d}$ can be proven in a parallel fashion.
		
		The well-definedness of $\overline{a}(f)$ follows from the weak*-compactness of $\mathcal{M}_T(X)$. We also know a priori that $\overline{c}(f) \leq \|f\|_{C(X)}$, and thus $\overline{c}(f)$ is well-defined.
		
		It still remains to prove that $\overline{d}(f)$ is well-defined, which we will accomplish by proving that $\overline{d}(f) = \overline{a}(f)$.
		
		For each $k \in \mathbb{N}$, choose $x_k \in X$ such that $\operatorname{Avg}_{F_k} f(x_k) = \sup \left\{ \operatorname{Avg}_{F_k} f(x) : x \in X \right\}$. Let $\mu_k$ be the Borel probability measure on $X$ defined by
		$$\int g \mathrm{d} \mu_k = \operatorname{Avg}_{F_k} f(x_k) .$$
		Let $\left( \mu_{k_\ell} \right)_{\ell = 1}^\infty$ be a weak*-convergent subsequence converging to the measure $\mu$. Then since $\mathbf{F}$ is \Folner, it follows from Lemma \ref{Invariance} that $\mu$ is $T$-invariant. Thus
		$$\overline{a}(f) \geq \int f \mathrm{d} \mu = \lim_{\ell \to \infty} \int \operatorname{Avg}_{F_{k_\ell}} f \mathrm{d} \mu_{k_\ell} = \lim_{\ell \to \infty} \left( \sup \left\{ \operatorname{Avg}_{F_{k_\ell}} f(x) : x \in X \right\} \right) . $$
		On the other hand, we know that if $\nu \in \mathcal{M}_T(X)$, then
		$$\int f \mathrm{d} \nu = \int \operatorname{Avg}_{F_{k_\ell}} f \mathrm{d} \nu \leq \sup \left\{ \operatorname{Avg}_{F_{k_\ell}} f(x) : x \in X \right\} , $$
		and thus taking $\ell \to \infty$ tells us that $\int f \mathrm{d} \nu \leq \int f \mathrm{d} \mu$. Therefore this measure $\mu$ is $f$-maximizing, meaning that $\overline{a}(f) = \int f \mathrm{d} \mu = \lim_{\ell \to \infty} \left( \sup \left\{ \operatorname{Avg}_{F_{k_\ell}} f(x) : x \in X \right\} \right)$. Since we know this holds true for \emph{any} weak*-convergent subsequence $\left(\mu_{k_\ell}\right)_{\ell = 1}^\infty$, and $\left( \mu_k \right)_{k = 1}^\infty$ takes values in the weak*-compact space $\mathcal{M}(X)$, we can conclude that $\overline{d}(f)$ is well-defined and equal to $\overline{a}(f)$.
		
		It follows immediately from the definitions that $\overline{b}(f) \leq \overline{c}(f)$, since
		\begin{align*}
			\overline{b}(f)	& = \sup \left\{ \lim_{k \to \infty} \operatorname{Avg}_{F_k} f(x) : x \in \operatorname{Reg}(f) \right\} \\
			& = \sup \left\{ \limsup_{k \to \infty} \operatorname{Avg}_{F_k} f(x) : x \in \operatorname{Reg}(f) \right\} \\
			& \leq \sup \left\{ \limsup_{k \to \infty} \operatorname{Avg}_{F_k} f(x) : x \in X \right\}	& = \overline{c}(f) .
		\end{align*}
		It similarly follows from definitions that $\overline{c}(f) \leq \overline{d}(f)$, since
		\begin{align*}
			\overline{c}(f)	& = \sup \left\{ \limsup_{k \to \infty} \operatorname{Avg}_{F_k} f(x) : x \in X \right\} \\
			& \leq \sup \left\{ \limsup_{k \to \infty} \operatorname{Avg}_{F_k} f(x_k) : (x_k)_{k = 1}^\infty \in X^\mathbb{N} \right\} \\
			& \leq \sup \left\{ \limsup_{k \to \infty} \left( \sup \left\{ \operatorname{Avg}_{F_k} f(x) : x \in X \right\} \right) : (x_k)_{k = 1}^\infty \in X^\mathbb{N} \right\} \\
			& = \limsup_{k \to \infty} \left( \sup \left\{ \operatorname{Avg}_{F_k} f(x) : x \in X \right\} \right)	& = \overline{d}(f) .
		\end{align*}
		
		Next we show that $\overline{a}(f) \leq \overline{c}(f)$. Let $k_1 < k_2 < \cdots$ such that $\left( F_{k_\ell} \right)_{\ell = 1}^\infty$ is a \emph{tempered} \Folner \space subsequence, a subsequence which exists by Lemma \ref{Tempered subsequence}. Let $\theta \in \partial_e \mathcal{M}_T(X)$. Then by the Lindenstrauss Ergodic Theorem, there exists $x \in X$ such that $\lim_{\ell \to \infty} \operatorname{Avg}_{F_{k_\ell}} f(x) = \int f \mathrm{d} \theta$. Therefore
		$$
		\int f \mathrm{d} \theta = \lim_{\ell \to \infty} \operatorname{Avg}_{F_{k_\ell}} f(x) \leq \limsup_{k \to \infty} \operatorname{Avg}_{F_k} f(x) \leq \overline{c}(f) .
		$$
		Suppose $\nu \in \mathcal{M}_T(X)$, and let $(\theta_x)_{x \in X}$ be the ergodic decomposition of $T : G \curvearrowright X$. Then
		$$
		\int f \mathrm{d} \nu = \int \left( \int f \mathrm{d} \theta_x \right) \mathrm{d} \nu(x) \leq \int \overline{c}(f) \mathrm{d} \nu(x) = \overline{c}(f) .
		$$
		Taking the supreumum over $\nu \in \mathcal{M}_T(X)$ confirms that $\overline{a}(f) \leq \overline{c}(f)$.
		
		Now assume that for every ergodic measure $\theta \in \partial_e \mathcal{M}_T(X)$ exists $x_\theta \in X$ such that $\int f \mathrm{d} \theta = \lim_{k \to \infty} \operatorname{Avg}_{F_k} f(x_\theta).$ We prove that $\overline{a}(f) \leq \overline{b}(f)$. To begin, we'll prove that $\int f \mathrm{d} \theta \leq \overline{b}(f)$ for all ergodic $\theta \in \partial_e \mathcal{M}_T(X)$, and then use the ergodic decomposition to extrapolate to the general case.
		
		First, consider the case where $\theta$ is an ergodic measure in $\mathcal{M}_T(X)$. Then there exists $x_\theta \in X$ such that
		$$\int f \mathrm{d} \theta = \lim_{k \to \infty} \operatorname{Avg}_{F_k} f\left(x_\theta\right) \leq \overline{b}(f) .$$
		Now suppose $\nu \in \mathcal{M}_T(X)$, and let $(\theta_x)_{x \in X}$ be the ergodic decomposition of $T : G \curvearrowright X$. Then
		$$\int f \mathrm{d} \nu = \int \left(\int f \mathrm{d} \theta_x\right) \mathrm{d} \nu(x) \leq \int \overline{b}(f) \mathrm{d} \nu(x) = \overline{b}(f) .$$
		Taking the supremum over $\nu \in \mathcal{M}_T(X)$ confirms that $\overline{a}(f) \leq \overline{b}(f)$.
	\end{proof}
	
	What remains unclear to us at this point is whether $\overline{a}(f) \leq \overline{b}(f), \; \underline{b}(f) \leq \underline{a}(f)$ in general. However, there are several general cases where we know the answer to be yes.
	\begin{itemize}
		\item If $\underline{a}(f) = \overline{a}(f)$, then every $x \in X$ is an $(f, \mathbf{F}, \nu)$-typical point for all $\nu \in \mathcal{M}_T(X)$. In particular, this will occur for all $f \in C_\mathbb{R}(X)$ if $T : G \curvearrowright X$ is uniquely ergodic.
		\item If $\mathbf{F}$ is tempered, then the Lindenstrauss Ergodic Theorem implies that the set of $(f, \mathbf{F}, \theta)$-typical points is of probability $1$ with respect to $\theta$ for ergodic $\theta$, and a fortiori, that the set is nonempty. This holds in particular if $G = \mathbb{Z}$ and $F_k = \{0, 1, \ldots, k - 1\}$ for all $k \in \mathbb{N}$, which is the setting of the classical Birkhoff Ergodic Theorem.
	\end{itemize}
	
	\begin{Cor}
		The values $\overline{c}(f), \underline{c}(f), \overline{d}(f), \underline{d}(f)$ are independent of the choice of \Folner \space sequence $\mathbf{F}$, and $\overline{b}(f) , \underline{b}(f)$ are independent of the choice of \emph{tempered} \Folner \space sequence.
	\end{Cor}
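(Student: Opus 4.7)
The approach is to reduce everything to the equalities $\overline{c}(f)=\overline{a}(f)=\overline{d}(f)$ and $\underline{c}(f)=\underline{a}(f)=\underline{d}(f)$ that were already established in Theorem \ref{Jenkinson's ergodic extrema}, since $\overline{a}(f)$ and $\underline{a}(f)$ are defined purely in terms of $\mathcal{M}_T(X)$, with no reference to any Følner sequence. So for $\overline{c},\underline{c},\overline{d},\underline{d}$, I would simply remark that both sides of the claimed independence collapse onto $\overline{a}(f)$ or $\underline{a}(f)$ by Theorem \ref{Jenkinson's ergodic extrema}, hence agree across any two choices of Følner sequence $\mathbf{F}$ and $\mathbf{F}'$.

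For $\overline{b}(f)$ and $\underline{b}(f)$, I would invoke the second half of Theorem \ref{Jenkinson's ergodic extrema}: whenever every ergodic $\theta\in\partial_e\mathcal{M}_T(X)$ admits an $(f,\mathbf{F},\theta)$-typical point, all four quantities in each chain coincide, in particular $\overline{b}(f)=\overline{a}(f)$ and $\underline{b}(f)=\underline{a}(f)$. The hypothesis that $\mathbf{F}$ is tempered is precisely what supplies these typical points: the Lindenstrauss Ergodic Theorem \cite[Theorem 3.3]{LindenstraussErgodicTheorem} says that for any ergodic $\theta$ the set of $x\in X$ with $\lim_{k\to\infty}\operatorname{Avg}_{F_k}f(x)=\int f\,\mathrm{d}\theta$ has full $\theta$-measure, and in particular is nonempty. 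Thus for any tempered Følner sequence $\mathbf{F}$, one has $\overline{b}(f)=\overline{a}(f)$ and $\underline{b}(f)=\underline{a}(f)$, and again the right-hand sides are independent of $\mathbf{F}$.

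I expect no genuine obstacle: the content is entirely bookkeeping, and the only nontrivial ingredient is the Lindenstrauss Ergodic Theorem, which is already in use. The one minor subtlety worth flagging in the write-up is that although the set $\operatorname{Reg}(f)$ used to define $\overline{b}(f)$ and $\underline{b}(f)$ genuinely depends on $\mathbf{F}$, the supremum and infimum of the limits over this set do not, once $\mathbf{F}$ is tempered. Concretely, if $\mathbf{F}$ and $\mathbf{F}'$ are two tempered Følner sequences, the corresponding sets $\operatorname{Reg}_{\mathbf{F}}(f)$ and $\operatorname{Reg}_{\mathbf{F}'}(f)$ may differ, but both give rise to the same extremal values $\overline{a}(f)$ and $\underline{a}(f)$, and the proof amounts to noting this common value.
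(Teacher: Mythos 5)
Your proposal is correct and follows essentially the same route as the paper: collapse $\overline{c},\underline{c},\overline{d},\underline{d}$ onto $\overline{a},\underline{a}$ via Theorem \ref{Jenkinson's ergodic extrema}, and for $\overline{b},\underline{b}$ use the Lindenstrauss Ergodic Theorem to guarantee $(f,\mathbf{F},\theta)$-typical points when $\mathbf{F}$ is tempered, then invoke the second half of the same theorem. Your closing remark about $\operatorname{Reg}(f)$ depending on $\mathbf{F}$ while the extremal values do not is a nice clarification but not a departure from the paper's argument.
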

	
	\begin{proof}
		The first claim follows from the fact that $\overline{a}(f), \underline{a}(f)$ are independent of $\mathbf{F}$, combined with Theorem \ref{Jenkinson's ergodic extrema}. The second claim follows from the fact that if $\mathbf{F}$ is a \emph{tempered} \Folner \space sequence, then by the Lindenstrauss Ergodic Theorem, every ergodic measure $\theta \in \partial_e \mathcal{M}_T(X)$ admits an $(f, \mathbf{F}, \theta)$-typical point, meaning Theorem \ref{Jenkinson's ergodic extrema} tells us that $\overline{b}(f) = \overline{a}(f) , \underline{b}(f) = \underline{a}(f)$.
	\end{proof}
	
	\section{Pathological multi-local temporo-spatial differentiations of individual functions}\label{Phasing problems}
	
	This section is motivated by the following question: Given a real-valued function $f \in C_\mathbb{R}(X)$, what possible sets $\mathcal{K}$ can be realized as
	$$
	\mathcal{K} = \left\{ \lim_{\ell \to \infty} \alpha_{B\left(x, y; r_{k_\ell}, s_{k_\ell}\right)} \left( \operatorname{Avg}_{F_{k_\ell}} f \right) : k_1 < k_2 < \cdots, \; \textrm{$\lim_{\ell \to \infty} \alpha_{B\left(x, y; r_{k_\ell}, s_{k_\ell}\right)} \left( \operatorname{Avg}_{F_{k_\ell}} f \right)$ exists} \right\}
	$$
	through judicious choices of $(\bar{x}; \bar{r}_k)_{k = 1}^\infty$? If $\mathcal{K}$ is non-singleton, then the temporo-spatial differentiation will of course be divergent.
	
	Before constructing these pathological temporo-spatial differentiations, we define a measure-theoretic property which will be important to us in this section.
	
	\begin{Def}
		Let $(X, \rho)$ be a compact metric space, and let $\mu$ be a Borel probability measure on $X$. We say that $\mu$ \emph{neglects shells} if
		\begin{align*}
			\mu \left( \left\{ y \in X : \rho(x, y) = r \right\} \right)	& = 0	& (\forall x \in X, r \in [0, \infty)) .
		\end{align*}
	\end{Def}
	
	A probability measure which neglects shells is automatically non-atomic, but the converse is false. Consider the case of $X = \left\{ (a, b) \in \mathbb{R}^2 : a^2 + b^2 \leq 2 \right\}$ with the standard Euclidean metric. Let $\mu$ be the Borel probability measure
	$$\mu(E) = \frac{1}{ \mathcal{H}^1(S) } \mathcal{H}^1(S \cap E) ,$$
	where $\mathcal{H}^1$ is the $1$-dimensional Hausdorff measure and $S = \left\{ (a, b) \in \mathbb{R}^2 : a^2 + b^2 = 1 \right\}$ is the unit circle in $\mathbb{R}^2$. Then this $\mu$ is non-atomic, but does not neglect shells.
	
	\begin{Thm}\label{Continuity for measures that neglect shells}
		The following conditions are equivalent.
		\begin{enumerate}[label=(\roman*)]
			\item The function $\phi : X \times [0, \infty) \to [0, 1]$ defined by
			$$\phi(x, r) = \mu(B(x; r))$$
			is continuous.
			\item $\mu$ neglects shells.
		\end{enumerate}
	\end{Thm}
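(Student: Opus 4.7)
The plan is to prove the two implications separately, using two standard continuity properties of the map $r \mapsto \mu(B(x_0; r))$ for $x_0$ fixed. Namely, applying continuity of measure to the increasing family $B(x_0; r') \uparrow B(x_0; r)$ as $r' \uparrow r$ shows this map is automatically left-continuous in $r$; applying it to the decreasing family $B(x_0; r') \downarrow \overline{B}(x_0; r)$ as $r' \downarrow r$ shows its right limit at $r$ equals $\mu(\overline{B}(x_0; r))$, so the size of the jump from the right is precisely $\mu(\{y \in X : \rho(x_0, y) = r\})$.

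For (i)$\Rightarrow$(ii), joint continuity of $\phi$ forces continuity in $r$ alone at every $(x_0, r_0)$, which in view of the preceding jump computation forces $\mu(\{y : \rho(x_0, y) = r_0\}) = 0$ for every $(x_0, r_0)$, and this is exactly the shell-neglecting condition. For (ii)$\Rightarrow$(i), I would fix $(x_0, r_0) \in X \times [0, \infty)$ and use the triangle inequality to establish the sandwich
\[
B(x_0;\, r_0 - 2\delta) \;\subseteq\; B(x;\, r) \;\subseteq\; B(x_0;\, r_0 + 2\delta)
\]
whenever $\rho(x, x_0) < \delta$ and $|r - r_0| < \delta$, with the convention that a ball of nonpositive radius is empty. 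This yields
\[
\mu\bigl(B(x_0;\, r_0 - 2\delta)\bigr) \;\leq\; \phi(x, r) \;\leq\; \mu\bigl(B(x_0;\, r_0 + 2\delta)\bigr) .
\]
As $\delta \to 0^+$, the left-hand side tends to $\mu(B(x_0; r_0)) = \phi(x_0, r_0)$ by left-continuity in $r$, and the right-hand side tends to $\mu(\overline{B}(x_0; r_0))$, which also equals $\phi(x_0, r_0)$ by the shell-neglecting hypothesis. Squeezing gives continuity of $\phi$ at $(x_0, r_0)$.

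The only delicate point is the boundary case $r_0 = 0$, where the lower half of the sandwich is vacuous; here one instead observes directly that $\bigcap_{r > 0} B(x_0; r) = \{x_0\} = \{y : \rho(x_0, y) = 0\}$ has $\mu$-measure zero by the shell-neglecting property at radius $0$, so the upper bound $\mu(\overline{B}(x_0; 0)) = \mu(\{x_0\}) = 0 = \phi(x_0, 0)$ still squeezes correctly. I do not anticipate a substantive obstacle beyond keeping careful track of this boundary case and of which monotone-limit argument applies to which side of the sandwich.
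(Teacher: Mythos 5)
Your proof is correct, and the (i)$\Rightarrow$(ii) direction matches the paper's: both compute the right-hand jump of $r \mapsto \mu(B(x_0; r))$ as $\mu(\{y : \rho(x_0,y) = r\})$ via a monotone limit of measures and then invoke continuity of $\phi$. Where you diverge is in (ii)$\Rightarrow$(i). The paper fixes a convergent sequence $(x_k, r_k) \to (x, r)$, shows that the indicators $\chi_{B(x_k; r_k)}$ converge pointwise to $\chi_{B(x;r)}$ off the shell $\{\rho(x,\cdot) = r\}$ (hence $\mu$-a.e.), and then applies the Dominated Convergence Theorem. You instead use the triangle inequality to sandwich $B(x; r)$ between two concentric balls around $x_0$ with radii $r_0 \pm 2\delta$, and then squeeze via the monotone limits already established in your preliminary observation. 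Both routes are sound; yours is slightly more elementary in that it bypasses DCT and instead reuses the same continuity-of-measure machinery from the forward direction, giving a more symmetric overall argument. Your treatment of the $r_0 = 0$ edge case is also correct, though strictly speaking it need not be separated out: the lower inclusion $\emptyset \subseteq B(x;r)$ is vacuously true and the lower bound $\mu(\emptyset) = 0 = \phi(x_0, 0)$ feeds into the squeeze without modification.
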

	
	\begin{proof}
		(i)$\Rightarrow$(ii): Suppose that $\phi$ is continuous, and fix $x \in X, r \in [0, \infty)$. Let $r_k = r + 1/k$ for all $k \in \mathbb{N}$. By downward continuity of measures, we know that
		$$\lim_{k \to \infty} \phi(x, r_k) = \mu \left( \{y \in X : \rho(x, y) \leq r \} \right) = \phi(x, r) + \mu(\left\{ y \in X : \rho(x, y) = r \right\}) .$$
		If $\lim_{k \to \infty} \phi(x, r_k) = \phi(x, r)$, then $\mu(\left\{ y \in X : \rho(x, y) = r \right\}) = 0$.
		
		(ii)$\Rightarrow$(i): Suppose that $\mu$ neglects shells, and let $(x_k, r_k)_{k = 1}^\infty$ be a sequence in $X \times [0, \infty)$ converging to $(x, r)$. Let $f_k, f \in L^\infty(X, \mu)$ be the functions
		\begin{align*}
			f_k	& = \chi_{B(x_k; r_k)} , \\
			f	& = \chi_{B(x; r)} .
		\end{align*}
		We claim that $f_k \to f$ pointwise on $\left\{ y \in X : \rho(x, y) \neq r \right\}$, which under the assumption that $\mu$ neglects shells constitutes convergence pointwise almost everywhere. If we can prove that, then we can appeal to the Dominated Convergence Theorem (using the constant function $1$ as a dominator) to conclude that $\phi(x_k, r_k) = \int f_k \mathrm{d} \mu \stackrel{k \to \infty}{\to} \int f \mathrm{d} \mu = \phi(x, r)$, i.e. that $\phi$ is (sequentially) continuous.
		
		First, consider the case where $\rho(x, y) < r$. Set $\epsilon = r - \rho(x, y)$. Then there exist $K_1, K_2 \in \mathbb{N}$ such that
		\begin{align*}
			k	& \geq K_1	& \Rightarrow |r_k - r|	& < \frac{\epsilon}{2} , \\
			k	& \geq K_2	& \Rightarrow \rho(x_k, x)	& < \frac{\epsilon}{2} .
		\end{align*}
		If $k \geq K_1$, then $r_k > r - \frac{\epsilon}{2}$. Set $K = \max \{K_1, K_2\}$, and suppose that $k \geq K$. Then
		\begin{align*}
			\rho(y, x_k)	& \leq \rho(y, x) + \rho(x, x_k) \\
			& < \rho(y, x) + \frac{\epsilon}{2} \\
			& = r - \frac{\epsilon}{2} \\
			& < r_k .
		\end{align*}
		Thus if $k \geq K$, then $f_k(y) = 1 = f(y)$. Therefore $\lim_{k \to \infty} f_k(y) = f(y)$ for $y \in B(x ; r)$.
		
		Second, consider the case where $\rho(x, z) > r$. Set $\delta = \min \left\{\rho(x, z) - r , \frac{\rho(x, z)}{2} \right\}$, and choose $L_1, L_2 \in \mathbb{N}$ such that
		\begin{align*}
			k	& \geq L_1	& \Rightarrow |r_k - r|	& < \frac{\delta}{2} , \\
			k	& \geq L_2	& \Rightarrow \rho(x_k, x)	& < \frac{\delta}{2} .
		\end{align*}
		Set $L = \max\{L_1, L_2\}$, and consider $k \geq L$. Then
		\begin{align*}
			\rho(z, x_k)	& \geq \left| \rho(z, x) - \rho(x, x_k) \right| \\
			& = \rho(z, x) - \rho(x, x_k) \\
			& > \rho(z, x) - \frac{\delta}{2} \\
			& > r + \delta - \frac{\delta}{2} \\
			& = r + \frac{\delta}{2} \\
			& > r_k .
		\end{align*}
		Thus if $k \geq L$, then $f_k(z) = 0 = f(z)$. Therefore $\lim_{k \to \infty} f_k(z) = f(z)$ for $\rho(z, x) > r$. This completes the proof.
	\end{proof}
	
	The property of neglecting shells is very important to us in this article because of Lemma \ref{Controlling relative measures}, which is a valuable tool for several constructions that will follow in this section and the next.
	
	\begin{Lem}\label{Controlling relative measures}
		Let $\mu$ be a Borel probability measure on $X$ that neglects shells, and let $x^{(1)}, \ldots, x^{(n)} \in \operatorname{supp}(\mu)$. Let $\delta^{(1)}, \ldots, \delta^{(n)} > 0$, and fix $\lambda^{(1)}, \ldots, \lambda^{(n)} \in (0, 1)$ such that $\lambda^{(1)} + \cdots + \lambda^{(n)} = 1$. Then there exist $r^{(1)}, \ldots, r^{(n)} > 0$ such that $0 < r^{(h)} < \delta^{(h)},$ and
		\begin{align*}
			\frac{\mu \left( B \left( x^{(h)} ; r^{(h)} \right) \right)}{ \mu \left( B \left( x^{(1)} ; r^{(1)} \right) \right) + \cdots + \mu \left( B \left( x^{(n)} ; r^{(n)} \right) \right)}	& = \lambda^{(h)}	& (h = 1, \ldots, n) .
		\end{align*}
	\end{Lem}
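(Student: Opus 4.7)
The plan is to reduce the system of $n$ ratio conditions to $n-1$ one-variable equations by normalizing against the first coordinate, and then invoke the intermediate value theorem using the continuity of $\phi(x,r) = \mu(B(x;r))$ guaranteed by Theorem \ref{Continuity for measures that neglect shells}. The key algebraic observation is that if we enforce
$$\mu\left( B\left( x^{(h)} ; r^{(h)} \right) \right) = \frac{\lambda^{(h)}}{\lambda^{(1)}} \mu\left( B \left( x^{(1)} ; r^{(1)} \right) \right) \quad (h = 2, \ldots, n),$$
then summing over $h$ and dividing yields exactly the ratio condition in the statement, since $\sum_{h=1}^n \lambda^{(h)} = 1$.

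So my first step will be to select the base radius $r^{(1)}$. I will pick $r^{(1)} \in (0, \delta^{(1)})$ small enough that
$$\frac{\lambda^{(h)}}{\lambda^{(1)}} \mu \left( B \left( x^{(1)} ; r^{(1)} \right) \right) < \mu \left( B \left( x^{(h)} ; \delta^{(h)} \right) \right) \quad (h = 2, \ldots, n).$$
This is possible because $\phi(x^{(1)}, \cdot)$ is continuous with $\phi(x^{(1)}, 0) = 0$, so $\mu(B(x^{(1)}; r))$ can be made arbitrarily small, while the right-hand sides are strictly positive since $x^{(h)} \in \operatorname{supp}(\mu)$ forces $\mu(B(x^{(h)}; \delta^{(h)})) > 0$.

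Next, for each $h \in \{2, \ldots, n\}$, I will apply the intermediate value theorem to the continuous map $r \mapsto \phi(x^{(h)}, r)$ on the interval $[0, \delta^{(h)}]$. Since this map is continuous, takes the value $0$ at $r = 0$, and takes a value strictly exceeding $\frac{\lambda^{(h)}}{\lambda^{(1)}} \mu(B(x^{(1)}; r^{(1)}))$ at $r = \delta^{(h)}$ by our choice above, there exists $r^{(h)} \in (0, \delta^{(h)})$ with $\mu(B(x^{(h)}; r^{(h)})) = \frac{\lambda^{(h)}}{\lambda^{(1)}} \mu(B(x^{(1)}; r^{(1)}))$. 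A direct summation then gives $\sum_{u=1}^n \mu(B(x^{(u)}; r^{(u)})) = \mu(B(x^{(1)}; r^{(1)}))/\lambda^{(1)}$, from which the desired identity follows immediately upon dividing.

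There is no genuine obstacle; the substance of the lemma is already in Theorem \ref{Continuity for measures that neglect shells}, which supplies the continuity of $\phi$ that makes the intermediate value theorem applicable. The only mild subtlety is the interplay between the constraints $r^{(h)} < \delta^{(h)}$ and the need for the target values $\frac{\lambda^{(h)}}{\lambda^{(1)}} \mu(B(x^{(1)}; r^{(1)}))$ to be achievable inside $(0, \delta^{(h)})$, which is handled by choosing $r^{(1)}$ sufficiently small at the outset.
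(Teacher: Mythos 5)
Your proposal is correct and follows essentially the same approach as the paper: both reduce the problem to hitting prescribed target values for $\mu(B(x^{(h)}; r^{(h)}))$ by applying the intermediate value theorem to the continuous function $\phi$ from Theorem \ref{Continuity for measures that neglect shells}. The paper parametrizes the targets as $(t\lambda^{(1)},\ldots,t\lambda^{(n)})$ for small $t$, while you fix $r^{(1)}$ first and set the other targets proportionally, but these are the same construction up to the change of variable $t = \mu(B(x^{(1)};r^{(1)}))/\lambda^{(1)}$.
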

	
	\begin{proof}
		Assume without loss of generality that
		$$
		\delta^{(h)} < \min_{1 \leq i < j \leq n} \rho \left( x^{(i)} , x^{(j)} \right) ,
		$$
		otherwise we can replace each $\delta^{(h)}$ with $\min \left\{ \delta^{(h)} , \frac{1}{4} \min_{1 \leq i < j \leq n} \rho \left( x^{(i)} , x^{(j)} \right) \right\}$.
		
		Choose real numbers $a^{(1)}, \ldots, a^{(h)} \in (0, 1)$ such that
		\begin{align*}
			\frac{a^{(h)}}{a^{(1)} + \cdots + a^{(n)}}	& = \lambda^{(h)} , \\
			a^{(1)}	& < \mu\left(B\left(x^{(h)}; \delta^{(h)}\right)\right) \\
		\end{align*}
		for all $h = 1, \ldots, n$. The tuple $\left(a^{(1)},\ldots, a^{(n)}\right) \in (0, 1)^n$ can be found along the line segment $\left\{ \left( t \lambda^{(1)}, \ldots, t \lambda^{(n)} \right) : t \in (0, 1) \right\}$. We know that $\mu\left(B\left(x^{(h)}; \delta^{(h)}\right)\right) > 0$ because we assumed that $x^{(h)} \in \operatorname{supp}(\mu)$. Then by Theorem \ref{Continuity for measures that neglect shells} and the Intermediate Value Theorem, there exist $r^{(h)} \in \left( 0 , \delta^{(h)} \right)$ such that
		\begin{align*}
			\mu \left( B \left( x^{(h)} ; r^{(h)} \right) \right)	& = a^{(h)}	& (h = 1, \ldots, n) ,
		\end{align*}
		and therefore
		\begin{align*}
			\frac{\mu \left( B \left( x^{(h)} ; r^{(h)} \right) \right)}{ \mu \left( B \left( x^{(1)} ; r^{(1)} \right) \right) + \cdots + \mu \left( B \left( x^{(n)} ; r^{(n)} \right) \right)}	& = \lambda^{(h)}	& (h = 1, \ldots, n) .
		\end{align*}
	\end{proof}
	
	\begin{Thm}\label{Sandwiching compact sets}
		Let $x, y \in X$ such that
		\begin{align*}
			u	& = \lim_{k \to \infty} \operatorname{Avg}_{F_k} f(x) , \\
			v	& = \lim_{k \to \infty} \operatorname{Avg}_{F_k} f(y)
		\end{align*}
		exist, where $u \leq v$. Suppose $\mathcal{K} \subseteq [u, v]$ is a nonempty compact subset. Let $\mu$ be a fully supported Borel probability measure on $X$ that neglects shells. Then there exist sequences $(r_k)_{k = 1}^\infty , (s_k)_{k = 1}^\infty$ of positive numbers such that
		$$
		\mathcal{K} = \operatorname{LS} \left( \left( \alpha_{B\left(x, y; r_{k}, s_{k}\right)} \left( \operatorname{Avg}_{F_{k}} f \right) \right)_{k = 1}^\infty \right)
		$$
	\end{Thm}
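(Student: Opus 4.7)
The plan is to reduce the multi-ball temporo-spatial average to a convex combination of two single-ball averages via Lemma \ref{Phasing for multi-balls}, replace each single-ball term by its Birkhoff average at the center using Lemma \ref{Singly local pointwise reduction}, and then engineer the convex weights so as to realize the prescribed limit set $\mathcal{K}$. Concretely, if we set
\[\lambda_k := \frac{\mu(B(x; r_k))}{\mu(B(x; r_k)) + \mu(B(y; s_k))},\]
then Lemmas \ref{Phasing for multi-balls} and \ref{Singly local pointwise reduction}, combined with the hypotheses $\operatorname{Avg}_{F_k} f(x) \to u$ and $\operatorname{Avg}_{F_k} f(y) \to v$, yield
\[\alpha_{B(x, y; r_k, s_k)} \bigl(\operatorname{Avg}_{F_k} f\bigr) = \lambda_k u + (1 - \lambda_k) v + o(1).\]
Assuming $u < v$ (the case $u = v$ forces $\mathcal{K} = \{u\}$ and follows immediately from Theorem \ref{Multi-local convergence}), the problem reduces to choosing $\lambda_k \in (0, 1)$ so that, under the affine bijection $\psi : [0, 1] \to [u, v]$ defined by $\psi(\lambda) = \lambda u + (1 - \lambda) v$, the limit set $\operatorname{LS}((\lambda_k))$ satisfies $\psi(\operatorname{LS}((\lambda_k))) = \mathcal{K}$.

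To carry this out, first fix an auxiliary sequence $(\tilde{r}_k)$ of positive numbers decaying $(X, \rho, H, L, \mathbf{F})$-fast, which exists by the remark following Definition \ref{Rapid decay}. Let $\Lambda := \psi^{-1}(\mathcal{K}) \subseteq [0, 1]$, which is a nonempty compact set. Build a sequence $(\lambda_k)_{k=1}^\infty$ in $(0, 1)$ with $\operatorname{LS}((\lambda_k)) = \Lambda$ by taking a countable dense subset $\{q_n\}$ of $\Lambda$, listing them in an order in which each $q_n$ recurs infinitely often, and at indices $k$ where the listed value equals $0$ setting $\lambda_k := 1/k$ (and similarly $\lambda_k := 1 - 1/k$ when the listed value is $1$); closedness of $\Lambda$ together with this perturbation ensures the limit set is exactly $\Lambda$ while every $\lambda_k$ lies strictly between $0$ and $1$. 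Now, because $\mu$ is fully supported and neglects shells, for each $k$ we invoke Lemma \ref{Controlling relative measures} with $n = 2$, centers $(x, y)$, target weights $(\lambda_k, 1 - \lambda_k)$, and upper bounds $\delta^{(1)} = \delta^{(2)} = \tilde{r}_k$, producing $r_k, s_k \in (0, \tilde{r}_k)$ that realize the ratio $\lambda_k$ exactly.

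Finally, since $r_k, s_k \leq \tilde{r}_k$, the pair sequence $(r_k, s_k)$ inherits $(X, \rho, H, L, \mathbf{F})$-fast decay from the monotonicity observation following Definition \ref{Rapid decay}. Hence Lemma \ref{Singly local pointwise reduction} gives $\alpha_{B(x;r_k)}(\operatorname{Avg}_{F_k} f) \to u$ and $\alpha_{B(y;s_k)}(\operatorname{Avg}_{F_k} f) \to v$, and combining with Lemma \ref{Phasing for multi-balls} produces the asymptotic identity displayed above. Any convergent subsequence of the temporo-spatial averages forces, by compactness of $[0, 1]$, a further convergent subsequence of $(\lambda_k)$ with limit some $\lambda \in \Lambda$, and the corresponding temporo-spatial limit is $\psi(\lambda) \in \mathcal{K}$; conversely, for each $w \in \mathcal{K}$ any subsequence of $(\lambda_k)$ converging to $\psi^{-1}(w)$ yields a temporo-spatial subsequence converging to $w$. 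The only delicate point is coordinating the three simultaneous demands on $r_k, s_k$, namely realizing the measure ratio $\lambda_k$, keeping the two balls disjoint, and decaying rapidly enough for Lemma \ref{Singly local pointwise reduction} to apply; this is precisely why Lemma \ref{Controlling relative measures} is stated with a free upper bound $\delta$, which we tie to the $k$-th term of the pre-chosen rapidly decaying majorant $\tilde{r}_k$.
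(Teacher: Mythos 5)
Your approach mirrors the paper's in its core mechanics — both use Lemma \ref{Controlling relative measures} to realize prescribed measure ratios and both engineer a sequence of convex weights whose limit set pulls back to $\mathcal{K}$ — but it imports a hypothesis that Theorem \ref{Sandwiching compact sets} does not carry. To make $\alpha_{B(x;r_k)}(\operatorname{Avg}_{F_k} f) - \operatorname{Avg}_{F_k} f(x) \to 0$, you pick $r_k \leq \tilde r_k$ for a $(X,\rho,H,L,\mathbf{F})$-fast sequence and invoke Lemma \ref{Singly local pointwise reduction}; you also dispatch the $u = v$ case via Theorem \ref{Multi-local convergence}. Both tools — and the very notion of $(X,\rho,H,L,\mathbf{F})$-fast decay — live under the standing hypothesis of Section \ref{Convergence} that $T$ is a H\"older action equipped with a MoH\"oC $(H,L)$. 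Section \ref{Phasing problems} drops that hypothesis, and the theorem as stated requires only continuity.

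The paper's proof dodges this by an elementary observation you can substitute directly: for each fixed $k$, $\operatorname{Avg}_{F_k} f$ is a finite average of continuous functions on the compact space $X$, hence uniformly continuous, so there exists $\delta_k > 0$ such that $\rho(w,z) \le \delta_k$ forces $|\operatorname{Avg}_{F_k} f(w) - \operatorname{Avg}_{F_k} f(z)| \le 1/k$. Feeding $\delta^{(1)} = \delta^{(2)} = \delta_k$ (in place of your $\tilde r_k$) into Lemma \ref{Controlling relative measures} then yields the same error bound $\left|\alpha_{B(x;r_k)}(\operatorname{Avg}_{F_k} f) - \operatorname{Avg}_{F_k} f(x)\right| \le 1/k$ with no rate-of-decay or H\"older condition whatsoever. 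With that replacement — and using the same uniform-continuity bound to handle $u=v$ — the remainder of your argument (constructing $(\lambda_k)$ with $\operatorname{LS}((\lambda_k)) = \psi^{-1}(\mathcal{K})$, the asymptotic identity, and the two-inclusion limit-set computation) is sound and yields the theorem in its stated generality. You should also note that your pushforward $\psi(\lambda) = \lambda u + (1-\lambda)v$ is decreasing on $[0,1]$, which is harmless but worth a word when you write $\psi^{-1}(\mathcal{K})$.
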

	
	\begin{proof}
		Let $P = \{p_i : i \in I\} \subseteq \mathcal{K}$ be a countable dense subset of $\mathcal{K}$ enumerated by the countable indexing set $I$, and let $\mathscr{N} = \left\{ \mathcal{N}_i : i \in I \right\}$ be a partition of $\mathbb{N}$ into countably many infinite subsets, also enumerated by $I$. For convenience, write $i(k)$ for the $i \in I$ such that $k \in \mathcal{N}_i$.
		
		For each $i \in I$, choose $\lambda_i \in [0, 1]$ such that
		$$p_i = \lambda_i u + (1- \lambda_i) v .$$
		For each $k \in \mathbb{N}$, choose $t_k \in (0, 1)$ such that
		$$
		|t_k - \lambda_{i(k)}|	\leq 1/k .
		$$
		
		Using the uniform continuity of $\operatorname{Avg}_{F_k} f$ and Lemma \ref{Controlling relative measures}, choose $(r_k, s_k)_{k = 1}^\infty$ such that
		\begin{align*}
			\rho\left(w, z\right) \leq \max \{ r_k , s_k \}	& \Rightarrow \left| \operatorname{Avg}_{F_k} f\left( w \right) - \operatorname{Avg}_{F_k} f \left( z \right) \right| \leq 1/k	& \left( \forall w, z \in X \right) , \\
			\frac{\mu(B(x; r_k))}{\mu(B(x; r_k)) + \mu(B(y; s_k))}	& = t_k
		\end{align*}
		for all $k \in \mathbb{N}$.
		
		For $k \in \mathbb{N}$, we have that
		\begin{align*}
			& \left| p_{i(k)} - \alpha_{B(x, y; r_k, s_k)}\left( \operatorname{Avg}_{F_k} f \right) \right| \\
			=	& \left| p_{i(k)} - \left( t_k \alpha_{B(x; r_k)}\left( \operatorname{Avg}_{F_k} f \right) + (1 - t_k) \alpha_{B(y; s_k)}\left( \operatorname{Avg}_{F_k} f \right)\right)\right| \\
			\leq	& \left| p_{i(k)} - \left( t_k \left( \operatorname{Avg}_{F_k} f(x) \right) + (1 - t_k) \left( \operatorname{Avg}_{F_k} f(y) \right)\right)\right| \\
			& + t_k \left| \alpha_{B(x; r_k)}\operatorname{Avg}_{F_k} (f(x) - f) \right| + (1 - t_k) \left| \alpha_{B(y; s_k)}\operatorname{Avg}_{F_k} (f(y) - f) \right| \\
			\leq	& \left| p_{i(k)} - \left( t_k \left( \operatorname{Avg}_{F_k} f(x) \right) + (1 - t_k) \left( \operatorname{Avg}_{F_k} f(y) \right)\right)\right| + \frac{1}{k} \\
			=	& \left| p_{i(k)} - \left( t_k u + (1 - t_k) v\right)\right| + t_k \left| u - \operatorname{Avg}_{F_k} f(x) \right| + (1 - t_k) \left| v - \operatorname{Avg}_{F_k} f(y) \right| + \frac{1}{k} \\
			=	& \left| \lambda_{i(k)} u + (1 - \lambda_{i(k)}) v - \left( t_k u + (1 - t_k) v\right)\right| \\
			& + t_k \left| u - \operatorname{Avg}_{F_k} f(x) \right| + (1 - t_k) \left| v - \operatorname{Avg}_{F_k} f(y) \right| + \frac{1}{k} \\
			\leq	& \left| \left(\lambda_{i(k)} - t_k \right) u \right| + \left| \left((1 -\lambda_{i(k)}) - (1 - t_k) \right) v \right| \\
			& + t_k \left| u - \operatorname{Avg}_{F_k} f(x) \right| + (1 - t_k) \left| v - \operatorname{Avg}_{F_k} f(y) \right| + \frac{1}{k} \\
			=	&\left| \left(\lambda_{i(k)} - t_k \right) u \right| + \left| \left( \lambda_{i(k)} - t_{k} \right) v \right| \\
			& + t_{k} \left| u - \operatorname{Avg}_{F_k} f(x) \right| + (1 - t_{k}) \left| v - \operatorname{Avg}_{F_k} f(y) \right| + \frac{1}{k} \\
			\leq	& \frac{1}{k} |u| + \frac{1}{k} |v| + t_{k} \left| u - \operatorname{Avg}_{F_k} f(x) \right| + (1 - t_{k}) \left| v - \operatorname{Avg}_{F_k} f(y) \right| + \frac{1}{k} \\
			\leq	& \frac{|u|}{k} + \frac{|v|}{k} + \left| u - \operatorname{Avg}_{F_k} f(x) \right| + \left| v - \operatorname{Avg}_{F_k} f(y) \right| + \frac{1}{k}
		\end{align*}
		
		We now claim that
		\begin{align*}
			& \mathcal{K} \\
			& = \left\{ \lim_{\ell \to \infty} \alpha_{B\left(x, y; r_{k_\ell}, s_{k_\ell}\right)} \left( \operatorname{Avg}_{F_{k_\ell}} f \right) : k_1 < k_2 < \cdots, \; \textrm{$\lim_{\ell \to \infty} \alpha_{B\left(x, y; r_{k_\ell}, s_{k_\ell}\right)} \left( \operatorname{Avg}_{F_{k_\ell}} f \right)$ exists} \right\} .
		\end{align*}
		
		We will prove the two sets contain each other, and thus are equal. First, let $q \in \mathcal{K}$, and choose a sequence $\left( p_{i_\ell} \right)_{\ell = 1}^\infty$ in $S$ such that $\left| q - p_{i_\ell} \right| < 1 / \ell$ for all $\ell \in \mathbb{N}$. For each $\ell \in \mathbb{N}$, recursively choose $k_\ell > \max \{k_1, \ldots, k_{\ell - 1}\}$ such that
		\begin{align*}
			\left| u - \operatorname{Avg}_{F_{k_\ell}} f(x) \right|	& < 1/\ell , \\
			\left| v - \operatorname{Avg}_{F_{k_\ell}} f(y) \right|	& < 1/\ell , \\
			k_\ell	& \in \mathcal{N}_{i_\ell} .
		\end{align*}
		Then
		\begin{align*}
			\left| q - \alpha_{B\left(x, y; r_{k_\ell}, s_{k_\ell} \right)}\left( \operatorname{Avg}_{F_{k_\ell}} f \right) \right|	& \leq \left| q - p_{i_\ell} \right| + \left| p_{i_\ell} - \alpha_{B\left(x, y; r_{k_\ell}, s_{k_\ell}\right)}\left( \operatorname{Avg}_{F_{k_\ell}} f \right) \right| \\
			& \leq \frac{1}{\ell} + \frac{1}{k_\ell} |u| + \frac{1}{k_\ell} |v| + \left| u - \operatorname{Avg}_{F_{k_\ell}} f(x) \right| \\
				& + \left| v - \operatorname{Avg}_{F_{k_\ell}} f(y) \right| + \frac{1}{k_\ell} \\
			& \leq \frac{1}{\ell} + \frac{|u|}{\ell} + \frac{|v|}{\ell} + \frac{1}{\ell} + \frac{1}{\ell} + \frac{1}{\ell} \\
			& = \frac{4 + |u| + |v|}{\ell} \\
			& \stackrel{\ell \to \infty}{\to} 0 .
		\end{align*}
		Therefore
		$$q \in \operatorname{LS} \left( \left( \alpha_{B\left(x, y; r_{k}, s_{k}\right)} \left( \operatorname{Avg}_{F_{k}} f \right) \right)_{k = 1}^\infty \right) .$$
		
		Conversely, let $k_1 < k_2 < \cdots$ be an increasing sequence of natural numbers such that $q = \lim_{\ell \to \infty} \alpha_{B\left(x_{k_\ell}, y_{k_\ell}; r_{k_\ell}, y_{k_\ell}\right)} \left( \operatorname{Avg}_{F_{k_\ell}} f \right)$ exists. Fix $\epsilon > 0$, and choose $K \in \mathbb{N}$ sufficiently large that
		\begin{align*}
			\Rightarrow \left| u - \operatorname{Avg}_{F_k} f(x) \right|	& < \epsilon	&(\forall k \geq K) , \\
			\Rightarrow \left| v - \operatorname{Avg}_{F_k} f(y) \right|	& < \epsilon	& (\forall k \geq K) , \\
			\frac{\max \{|u|, |v|, 1\}}{K}	& < \epsilon , \\
			\left| q - \alpha_{B\left(x, y; r_{k_\ell}, s_{k_\ell}\right)}\left( \operatorname{Avg}_{F_{k_\ell}} f \right) \right|	& < \epsilon	& (\forall \ell \geq K) .
		\end{align*}
		Then if $\ell \geq K$, we have
		\begin{align*}
			\left| p_{i \left( k_\ell \right)} - q \right|	& \leq \left| p_{i\left( k_\ell \right)} - \alpha_{B\left(x, y; r_{k_\ell}, s_{k_\ell}\right)}\left( \operatorname{Avg}_{F_{k_\ell}} f \right) \right| + \left| \alpha_{B\left(x, y; r_{k_\ell}, s_{k_\ell}\right)}\left( \operatorname{Avg}_{F_{k_\ell}} f \right) - q \right| \\
			& \leq \frac{1}{k} |u| + \frac{1}{k} |v| + \left| u - \operatorname{Avg}_{F_k} f(x) \right| + \left| v - \operatorname{Avg}_{F_k} f(y) \right| + \frac{1}{k} + \epsilon \\
			& < 6 \epsilon .
		\end{align*}
		Therefore $\inf_{p \in \mathcal{K}} |p - q| < 6 \epsilon$. Since our choice of $\epsilon > 0$ was arbitrary, it follows that \linebreak$\inf_{p \in \mathcal{K}} |p - q| = 0$, and since $\mathcal{K}$ is compact, this implies that $q \in \mathcal{K}$.
	\end{proof}
	
	\begin{Cor}\label{Targeting compact sets between extremes}
		Suppose $G$ is an amenable group, and $\mathbf{F} = (F_k)_{k = 1}^\infty$ is a right \Folner \space sequence for $G$. Let $f \in C_\mathbb{R}(X)$ such that for every ergodic $\theta \in \partial_e \mathcal{M}_T(X)$ exists an $(f, \mathbf{F}, \theta)$-typical point. Let $\mathcal{K}$ be a compact subset of the compact interval
		$$\left[ \underline{a}(f) , \; \overline{a}(f) \right] .$$
		Let $\mu$ be a fully supported Borel probability measure on $X$ that neglects shells. Then there exist points $x, y \in X$ and sequences $(r_k)_{k = 1}^\infty , (s_k)_{k = 1}^\infty$ of positive numbers such that
		$$
		\mathcal{K} = \operatorname{LS} \left( \left( \alpha_{B\left(x, y; r_{k}, s_{k}\right)} \left( \operatorname{Avg}_{F_{k}} f \right) \right)_{k = 1}^\infty \right) .
		$$
	\end{Cor}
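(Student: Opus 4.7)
The plan is to reduce this corollary directly to Theorem \ref{Sandwiching compact sets}. The only work is to exhibit two points whose pointwise Birkhoff averages along $\mathbf{F}$ converge to the extreme values $\underline{a}(f)$ and $\overline{a}(f)$ respectively, so that they bracket the given compact set $\mathcal{K}$.

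First, I would observe that $\Phi : \mathcal{M}_T(X) \to \mathbb{R}$ defined by $\Phi(\nu) = \int f \, \mathrm{d}\nu$ is weak*-continuous and affine. Since $\mathcal{M}_T(X)$ is a weak*-compact Choquet simplex and $\Phi$ is continuous affine, the Bauer minimum/maximum principle gives extreme points $\theta_-, \theta_+ \in \partial_e \mathcal{M}_T(X)$ with $\int f \, \mathrm{d}\theta_- = \underline{a}(f)$ and $\int f \, \mathrm{d}\theta_+ = \overline{a}(f)$. These extreme points are, by definition, ergodic $T$-invariant measures.

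Second, I would invoke the standing hypothesis that every ergodic measure admits an $(f, \mathbf{F}, \cdot)$-typical point, applied to $\theta_-$ and $\theta_+$. This yields points $x, y \in X$ with
\begin{align*}
    \lim_{k \to \infty} \operatorname{Avg}_{F_k} f(x) &= \int f \, \mathrm{d}\theta_- = \underline{a}(f), \\
    \lim_{k \to \infty} \operatorname{Avg}_{F_k} f(y) &= \int f \, \mathrm{d}\theta_+ = \overline{a}(f).
\end{align*}
Set $u = \underline{a}(f)$ and $v = \overline{a}(f)$; then $u \leq v$ and, by hypothesis, $\mathcal{K} \subseteq [u, v]$.

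Finally, Theorem \ref{Sandwiching compact sets} applies verbatim to $x$, $y$, $\mathcal{K}$, and $\mu$ (which is fully supported and neglects shells by hypothesis), producing sequences $(r_k)_{k=1}^\infty, (s_k)_{k=1}^\infty$ of positive numbers with
\[
\mathcal{K} = \operatorname{LS}\left( \left( \alpha_{B(x, y; r_k, s_k)} \left( \operatorname{Avg}_{F_k} f \right) \right)_{k=1}^\infty \right).
\]
There is no real obstacle: once Bauer's principle supplies the two extreme ergodic measures and the typicality hypothesis converts them into pointwise-regular points realising the endpoints of $[\underline{a}(f), \overline{a}(f)]$, the corollary is an immediate specialization of Theorem \ref{Sandwiching compact sets}.
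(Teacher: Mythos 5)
Your proposal is correct and matches the paper's proof in structure: produce ergodic measures $\theta_-, \theta_+$ realizing $\underline{a}(f)$ and $\overline{a}(f)$, use the typicality hypothesis to obtain points $x, y$, then apply Theorem \ref{Sandwiching compact sets}. The only cosmetic difference is sourcing: you invoke the Bauer maximum principle to extract extreme-point maximizers/minimizers of the affine continuous functional $\nu \mapsto \int f \, \mathrm{d}\nu$, whereas the paper cites Jenkinson's Proposition 2.4(iii), which records the same fact in the ergodic optimization setting.
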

	
	\begin{proof}
		By \cite[Proposition 2.4-(iii)]{Jenkinson}, there exist ergodic Borel probability measures $\theta_1, \theta_2$ such that
		\begin{align*}
			\int f \mathrm{d} \theta_1	& = \underline{a}(f) , \\
			\int f \mathrm{d} \theta_2	& = \overline{a}(f) .
		\end{align*}	
		By hypothesis, there exist $x, y \in X$ such that
		\begin{align*}
			\lim_{\ell \to \infty} \operatorname{Avg}_{F_{k_\ell}} f(x)	& = \int f \mathrm{d} \theta_1 , \\
			\lim_{\ell \to \infty} \operatorname{Avg}_{F_{k_\ell}} f(y)	& = \int f \mathrm{d} \theta_2 .
		\end{align*}
		Apply Theorem \ref{Sandwiching compact sets}.
	\end{proof}
	
	\begin{Cor}
		Suppose $G$ is an amenable group, and $\mathbf{F} = (F_k)_{k = 1}^\infty$ is a right \Folner \space sequence for $G$. Let $f \in C_\mathbb{R}(X)$, and let $\mathcal{K}$ be a compact subset of the compact interval
		$$\left[ \underline{a}(f) , \; \overline{a}(f) \right] .$$
		Let $\mu$ be a fully supported Borel probability measure on $X$ that neglects shells. Then there exist points $x, y \in X$ and sequences $(r_k)_{k = 1}^\infty , (s_k)_{k = 1}^\infty$ of positive numbers such that
		$$
		\mathcal{K} \subseteq \operatorname{LS} \left( \left( \alpha_{B\left(x, y; r_{k}, s_{k}\right)} \left( \operatorname{Avg}_{F_{k}} f \right) \right)_{k = 1}^\infty \right) .
		$$
	\end{Cor}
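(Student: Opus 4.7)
The plan is to reduce to Corollary \ref{Targeting compact sets between extremes} by passing to a tempered \Folner{} subsequence of $\mathbf{F}$, at the price of weakening the conclusion from equality to containment. The point is that the typicality hypothesis in the previous corollary, which is what we must finesse, is automatic for tempered \Folner{} sequences via Lindenstrauss, while the bounds $\underline{a}(f), \overline{a}(f)$ depend only on $\mathcal{M}_T(X)$ and thus survive any change of \Folner{} sequence.

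First, I would invoke Lemma \ref{Tempered subsequence} to extract indices $k_1 < k_2 < \cdots$ so that $\mathbf{F}' := (F_{k_\ell})_{\ell = 1}^\infty$ is a tempered \Folner{} sequence. By the Lindenstrauss Ergodic Theorem (\cite[Theorem 3.3]{LindenstraussErgodicTheorem}), for each ergodic $\theta \in \partial_e \mathcal{M}_T(X)$ the set of $(f, \mathbf{F}', \theta)$-typical points has full $\theta$-measure, hence is nonempty; so $\mathbf{F}'$ satisfies the hypothesis of Corollary \ref{Targeting compact sets between extremes}. Since $\underline{a}(f)$ and $\overline{a}(f)$ are defined purely in terms of $\mathcal{M}_T(X)$, the compact set $\mathcal{K}$ is still contained in $[\underline{a}(f), \overline{a}(f)]$ when we view things through $\mathbf{F}'$.

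Next, apply Corollary \ref{Targeting compact sets between extremes} to the data $(X, \rho, T, \mathbf{F}', \mu, f, \mathcal{K})$ to obtain points $x, y \in X$ and sequences $(\tilde r_\ell)_{\ell = 1}^\infty, (\tilde s_\ell)_{\ell = 1}^\infty$ of positive numbers with
$$\mathcal{K} = \operatorname{LS}\left(\left(\alpha_{B(x, y; \tilde r_\ell, \tilde s_\ell)}\left(\operatorname{Avg}_{F_{k_\ell}} f\right)\right)_{\ell = 1}^\infty\right).$$
Then extend to a sequence indexed by all of $\mathbb{N}$: set $r_{k_\ell} := \tilde r_\ell$ and $s_{k_\ell} := \tilde s_\ell$, and for $k \notin \{k_\ell : \ell \in \mathbb{N}\}$ choose $r_k, s_k$ arbitrarily (say $r_k = s_k = 1$). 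Since the limit set of a sequence always contains the limit set of any subsequence, we obtain
$$\mathcal{K} = \operatorname{LS}\left(\left(\alpha_{B(x, y; r_{k_\ell}, s_{k_\ell})}\left(\operatorname{Avg}_{F_{k_\ell}} f\right)\right)_{\ell = 1}^\infty\right) \subseteq \operatorname{LS}\left(\left(\alpha_{B(x, y; r_k, s_k)}\left(\operatorname{Avg}_{F_k} f\right)\right)_{k = 1}^\infty\right),$$
which is the desired conclusion.

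There is no serious obstacle once the subsequence trick is seen; the only thing to verify carefully is that the typicality hypothesis of Corollary \ref{Targeting compact sets between extremes} is genuinely automatic for the tempered subsequence, and that the endpoints $\underline{a}(f), \overline{a}(f)$ do not shift when we replace $\mathbf{F}$ by $\mathbf{F}'$, both of which are already noted in the bulleted remarks following Theorem \ref{Jenkinson's ergodic extrema}.
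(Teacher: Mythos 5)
Your proposal is correct and takes essentially the same approach as the paper: extract a tempered \Folner{} subsequence, note that Lindenstrauss supplies the typical points (equivalently, that the typicality hypothesis is automatic), obtain equality of limit sets along the subsequence via the Sandwiching machinery, and pass to containment for the full sequence. The only cosmetic difference is that you invoke Corollary \ref{Targeting compact sets between extremes} as a black box, whereas the paper inlines that corollary's short argument (locating the extremal ergodic measures and applying Theorem \ref{Sandwiching compact sets} directly); your version is, if anything, slightly cleaner for reusing the already-proved statement.
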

	
	\begin{proof}
		Choose a tempered \Folner \space subsequence $\left( F_{k_\ell} \right)_{\ell = 1}^\infty$ of $\mathbf{F}$. By \cite[Proposition 2.4-(iii)]{Jenkinson}, there exist ergodic Borel probability measures $\theta_1, \theta_2$ such that
		\begin{align*}
			\int f \mathrm{d} \theta_1	& = \underline{a}(f) , \\
			\int f \mathrm{d} \theta_2	& = \overline{a}(f) .
		\end{align*}
		By the Lindestrauss Ergodic Theorem, there exist $x, y \in X$ such that
		\begin{align*}
			\lim_{\ell \to \infty} \operatorname{Avg}_{F_{k_\ell}} f(x)	& = \int f \mathrm{d} \theta_1 , \\
			\lim_{\ell \to \infty} \operatorname{Avg}_{F_{k_\ell}} f(y)	& = \int f \mathrm{d} \theta_2 .
		\end{align*}
		By Theorem \ref{Sandwiching compact sets}, there exist $\left(r_{k}\right)_{k = 1}^\infty, \left( s_k \right)_{k = 1}^\infty \in (0, \infty)^\mathbb{N}$ such that
		$$\mathcal{K} = \operatorname{LS} \left( \left( \alpha_{B\left(x, y; r_{k_\ell}, s_{k_\ell}\right)} \left( \operatorname{Avg}_{F_{k_\ell}} f \right) \right)_{\ell = 1}^\infty \right) .$$
		Then
		$$
		\mathcal{K} \subseteq \operatorname{LS} \left( \left( \alpha_{B\left(x, y; r_{k}, s_{k}\right)} \left( \operatorname{Avg}_{F_{k}} f \right) \right)_{k = 1}^\infty \right) .
		$$
	\end{proof}

	\begin{Thm}
		Suppose $G$ is an amenable group, and $\mathbf{F} = (F_k)_{k = 1}^\infty$ is a tempered \Folner \space sequence for $G$. Let $f \in C_\mathbb{R}(X)$, and let $\mathcal{K}$ be a compact subset of the compact interval
		$$\left[ \underline{a}(f) , \; \overline{a}(f) \right] .$$
		Let $\mu$ be a fully supported Borel probability measure on $X$ that neglects shells. Then there exist points $x, y \in X$ and sequences $(r_k)_{k = 1}^\infty , (s_k)_{k = 1}^\infty$ of positive numbers such that
		$$
		\mathcal{K} = \operatorname{LS} \left( \left( \alpha_{B\left(x, y; r_{k}, s_{k}\right)} \left( \operatorname{Avg}_{F_{k}} f \right) \right)_{k = 1}^\infty \right) .
		$$
	\end{Thm}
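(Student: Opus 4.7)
The plan is to reduce this directly to Theorem \ref{Sandwiching compact sets} by producing two points $x, y \in X$ whose full-sequence Birkhoff averages along $\mathbf{F}$ converge to the two endpoints $\underline{a}(f)$ and $\overline{a}(f)$ of the interval containing $\mathcal{K}$. The only reason the preceding corollary gave just an inclusion $\mathcal{K} \subseteq \operatorname{LS}(\cdots)$ rather than equality is that the typical points there could only be extracted along a tempered Følner subsequence; since our current $\mathbf{F}$ is itself tempered, the Lindenstrauss Ergodic Theorem applies to $\mathbf{F}$ directly, so we get genuine two-sided control and hence the equality version of the conclusion.

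First, by \cite[Proposition 2.4-(iii)]{Jenkinson}, choose ergodic measures $\theta_1, \theta_2 \in \partial_e \mathcal{M}_T(X)$ with $\int f \, \mathrm{d}\theta_1 = \underline{a}(f)$ and $\int f \, \mathrm{d}\theta_2 = \overline{a}(f)$. Second, since $\mathbf{F}$ is a tempered Følner sequence, the Lindenstrauss Ergodic Theorem produces $(f, \mathbf{F}, \theta_1)$-typical and $(f, \mathbf{F}, \theta_2)$-typical points $x, y \in X$, meaning
\begin{align*}
\lim_{k \to \infty} \operatorname{Avg}_{F_k} f(x) &= \underline{a}(f), \\
\lim_{k \to \infty} \operatorname{Avg}_{F_k} f(y) &= \overline{a}(f).
\end{align*}
Setting $u = \underline{a}(f)$ and $v = \overline{a}(f)$, we have $\mathcal{K} \subseteq [u, v]$ with $u \leq v$, and both limits above exist along the full index sequence.

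Third, apply Theorem \ref{Sandwiching compact sets} to the points $x, y$ (using the fully supported shell-neglecting measure $\mu$) to obtain sequences $(r_k)_{k=1}^\infty, (s_k)_{k=1}^\infty$ of positive numbers such that
$$\mathcal{K} = \operatorname{LS}\left(\left(\alpha_{B(x, y; r_k, s_k)}(\operatorname{Avg}_{F_k} f)\right)_{k = 1}^\infty\right),$$
which is exactly the desired conclusion. There is no genuine obstacle here beyond unpacking the hypotheses: the substantive work was already done in Theorem \ref{Sandwiching compact sets} (the phasing construction using Lemma \ref{Controlling relative measures}) and in the Lindenstrauss theorem; the temperedness assumption on $\mathbf{F}$ is exactly what is needed to upgrade the previous corollary's inclusion to equality, by making the subsequence extraction unnecessary.
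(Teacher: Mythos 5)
Your proof is correct and is essentially the paper's argument: the paper proves this by noting that temperedness of $\mathbf{F}$ plus the Lindenstrauss Ergodic Theorem yields $(f, \mathbf{F}, \theta)$-typical points for every ergodic $\theta$, and then invokes Corollary \ref{Targeting compact sets between extremes}, whose proof is precisely the Jenkinson-plus-typical-points-plus-Theorem-\ref{Sandwiching compact sets} reduction you carry out explicitly. You have simply inlined the corollary rather than citing it.
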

	
	\begin{proof}
		The Lindenstrauss Ergodic Theorem implies that for every ergodic $\theta \in \partial_e \mathcal{M}_T(X)$ exists an $(f, \mathbf{F}, \theta)$-typical point. Apply Corollary \ref{Targeting compact sets between extremes}.
	\end{proof}
	
	\section{Pathological multi-local temporo-spatial differentiations on $C(X)$}\label{Chasing measures}
	
	In this section, we consider a temporo-spatial differentiation $\left( \alpha_{C_k} \circ \operatorname{Avg}_{F_k} \right)_{k = 1}^\infty$ as a sequence in $\mathcal{M}_T(X)$. If $G$ is a discrete amenable group, and $\mathbf{F} = (F_k)_{k = 1}^\infty$ is a \Folner \space sequence, then Lemma \ref{Invariance} tells us that
	$$
	\operatorname{LS} \left( \left( \alpha_{C_k} \circ \operatorname{Avg}_{F_k} \right)_{k = 1}^\infty \right) \subseteq \mathcal{M}_T(X) 
	$$
	for all sequences $(C_k)_{k = 1}^\infty$ of measurable subsets of $X$ with positive measure.
	
	We are motivated here by the following question: Consider an action $T : G \curvearrowright X$ of a discrete amenable group $G$ on a compact metrizable space by $X$, where $X$ is endowed with a Borel probability measure $\mu$. Given a \Folner \space sequence $\mathbf{F} = (F_k)_{k = 1}^\infty$ for $G$, can we choose a sequence $(C_k)_{k = 1}^\infty$ of measurable subsets of $X$ with $\mu(C_k) > 0$ such that
	$$
	\operatorname{LS} \left( \left( \alpha_{C_k} \circ \operatorname{Avg}_{F_k} \right)_{k = 1}^\infty \right) = \mathcal{C} ,
	$$
	where $\mathcal{C}$ is some prescribed compact subset of $\mathcal{M}_T(X)$? If so, then can the $(C_k)_{k = 1}^\infty$ be chosen to fit some prescribed constraints?
	
	In this section, we provide positive answers for certain classes of $\mathcal{C}$. Throughout this section, assume that $G$ is a discrete amenable group and $\mathbf{F}$ is a \Folner \space sequence for $G$. We also assume that $T : G \curvearrowright X$ is a H\"older action with MoH\"oC $(H, L)$.
	
	\begin{Lem}\label{1-Lipschitz functions dense span}
		Let $\mathcal{L}  \subseteq C(X)$ denote the family of functions $f \in C(X)$ for which
		\begin{align*}
			|f(x) - f(y)|	& \leq \rho(x, y)	& (\forall x, y \in X) .
		\end{align*}
		Then $\mathcal{L}$ has dense span in $C(X)$.
	\end{Lem}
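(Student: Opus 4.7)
The plan is to identify the linear span of $\mathcal{L}$ with the algebra of all Lipschitz continuous functions on $X$, and then invoke the Stone--Weierstrass theorem.

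First, I would observe that $\operatorname{span}(\mathcal{L})$ coincides with the set $\operatorname{Lip}(X)$ of all Lipschitz continuous functions $f : X \to \mathbb{C}$. One inclusion is clear, since any element of $\operatorname{span}(\mathcal{L})$ is a finite linear combination of $1$-Lipschitz functions and hence is Lipschitz. Conversely, if $f \in C(X)$ satisfies $|f(x) - f(y)| \leq c \rho(x, y)$ for some $c > 0$, then $f/c \in \mathcal{L}$, so $f \in \operatorname{span}(\mathcal{L})$; constant functions are vacuously in $\mathcal{L}$.

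Next, I would verify that $\operatorname{Lip}(X)$ is a conjugation-closed unital subalgebra of $C(X)$ that separates the points of $X$. Closure under addition, scalar multiplication, and complex conjugation is immediate from the triangle inequality together with the identity $|\overline{f(x)} - \overline{f(y)}| = |f(x) - f(y)|$. Closure under pointwise multiplication follows from
$$|f(x) g(x) - f(y) g(y)| \leq \|f\|_{C(X)} \cdot c_g \cdot \rho(x, y) + \|g\|_{C(X)} \cdot c_f \cdot \rho(x, y),$$
where $c_f, c_g$ denote Lipschitz constants of $f, g$ and the sup norms are finite since $X$ is compact. The algebra contains the constants (which are $0$-Lipschitz). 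To separate points, for any $x_0 \in X$ the function $z \mapsto \rho(z, x_0)$ is $1$-Lipschitz by the reverse triangle inequality, and for any distinct $x, y \in X$ it takes distinct values at $x$ and $y$.

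Finally, I would apply the complex form of the Stone--Weierstrass theorem to conclude that $\operatorname{Lip}(X) = \operatorname{span}(\mathcal{L})$ is dense in $C(X)$. There is no substantive obstacle here: the proof is a routine application of Stone--Weierstrass, and the only content beyond invoking that theorem is the elementary scaling observation that allows one to pass between ``$1$-Lipschitz'' and ``Lipschitz'' without leaving the linear span.
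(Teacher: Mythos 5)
Your proposal is correct and takes essentially the same route as the paper: both rely on Stone--Weierstrass, with the distance functions $z \mapsto \rho(z, x_0)$ providing point separation and the estimate $|fg(x) - fg(y)| \le (\|f\|_{C(X)} c_g + \|g\|_{C(X)} c_f)\rho(x,y)$ showing that products stay Lipschitz. The only organizational difference is that you verify the Stone--Weierstrass hypotheses directly for $\operatorname{Lip}(X) = \operatorname{span}(\mathcal{L})$, whereas the paper applies Stone--Weierstrass to the subalgebra generated by the distance functions and then separately checks that this subalgebra sits inside $\operatorname{span}(\mathcal{L})$ via the same product scaling argument.
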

	
	\begin{proof}
		For $x_0 \in X$, set $\phi_{x_0} (x) = \rho(x, x_0)$. If $x, y \in X$, then by the Reverse Triangle Inequality we know
		$$
		|\phi_{x_0}(x) - \phi_{x_0}(y)| = |\rho(x, x_0) - \rho(y, x_0)| \leq \rho(x, y) .
		$$
		Thus the functions $\phi_{x_0}$ satisfy the prescribed Lipschitz condition, as does the constant function $1$. Furthermore, we know that $\left\{ \phi_{x_0} : x_0 \in X \right\}$ separates points, since if $x, y \in X, x \neq y$, then $0 = \phi_x(x) \neq \phi_x(y)$. Therefore by the Stone-Weierstrass Theorem, we know that $C(X)$ is densely spanned by finite products of elements in $\left\{ \phi_{x_0} : x \in X \right\} \cup \{1\} \subseteq \mathcal{L}$. We claim, however, that a product of elements in $\mathcal{L}$ is a scalar multiple of an element in $\mathcal{L}$. Let $f_1, f_2 \in \mathcal{L}$. Then
		\begin{align*}
			|f_1(x) f_2(x) - f_1(y) f_2(y)|	& = |f_1(x) f_2(x) - f_1(x) f_2(y) + f_1(x) f_2(y) - f_1(y) f_2(y)| \\
			& \leq |f_1(x)| \cdot |f_2(x) - f_2(y)| + |f_1(x) - f_1(y)| \cdot |f_2(y)| \\
			& \leq \|f_1\|_{C(X)} \cdot |f_2(x) - f_2(y)| + |f_1(x) - f_1(y)| \cdot \|f_2\|_{C(X)} \\
			& \leq \left( \|f_1\|_{C(X)} + \|f_2\|_{C(X)} \right) \rho(x, y) .
		\end{align*}
		Let $h = \frac{f_1 f_2} { \|f_1\|_{C(X)} + \|f_2\|_{C(X)} + 1 }$. Then $h \in \mathcal{L}$, so $f_1 f_2 = \left( \|f_1\|_{C(X)} + \|f_2\|_{C(X) } + 1 \right) h \in \mathbb{C} \mathcal{L}$. By an inductive argument, we can show that any finite product of elements of $\mathcal{L}$ is an element of $\mathbb{C} \mathcal{L}$. Therefore, the Stone-Weierstrass Theorem tells us that $C(X)$ is densely spanned by $\mathcal{L}$.
	\end{proof}
	
	\begin{Thm}\label{Targeting finite-dimensional subsets}
		Let $\theta^{(1)}, \ldots, \theta^{(n)} \in \partial_e \mathcal{M}_T(X)$ be a finite collection of ergodic measures on $X$, and let $\mathcal{C}$ be a compact subset of the convex hull of $\left\{ \theta^{(1)}, \ldots, \theta^{(n)} \right\}$. Suppose $\mathbf{F}$ is a tempered \Folner \space sequence, and that $\mu$ is a Borel probability measure on $X$ that neglects shells. Then there exist points $x^{(1)}, \ldots, x^{(n)}$ and sequences of radii $\left( r_k^{(1)} \right)_{k = 1}^\infty , \ldots, \left( r_k^{(n)} \right)_{k = 1}^\infty$ such that
		$$
		\operatorname{LS} \left( \left( \alpha_{B \left( x^{(1)} , \ldots, x^{(n)} ; r_k^{(1)} , \ldots, r_k^{(n)} \right)} \circ \operatorname{Avg}_{F_k} \right)_{k = 1}^\infty \right) = \mathcal{C} .
		$$
		Moreover, the set of $n$-tuples $\left( x^{(1)}, \ldots, x^{(n)} \right) \in X^n$ which admit such sequences $\left( r_k^{(1)}, \ldots, r_k^{(n)} \right)_{k = 1}^\infty$ is of full probability with respect to the product measure $\theta^{(1)} \times \cdots \times \theta^{(n)}$.
	\end{Thm}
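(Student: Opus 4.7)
The plan is to combine four ingredients: the Lindenstrauss ergodic theorem (to find typical centers $x^{(h)}$ for the $\theta^{(h)}$), Lemma \ref{Phasing for multi-balls} (to express the multi-ball average as a convex combination of single-ball averages), Lemma \ref{Singly local pointwise reduction} (to replace those single-ball averages by pointwise Birkhoff averages at the centers), and Lemma \ref{Controlling relative measures} (to tune the convex coefficients by manipulating the radii). Because $\theta^{(1)}, \ldots, \theta^{(n)}$ are ergodic and hence extreme in $\mathcal{M}_T(X)$, the continuous affine map $\Phi : \bar{\lambda} \mapsto \sum_{h = 1}^n \lambda^{(h)} \theta^{(h)}$ is a homeomorphism from the simplex $\Delta = \{\bar{\lambda} \in [0,1]^n : \sum_h \lambda^{(h)} = 1\}$ onto $\operatorname{conv}\{\theta^{(1)}, \ldots, \theta^{(n)}\}$, so $\widetilde{\mathcal{C}} := \Phi^{-1}(\mathcal{C})$ is a compact subset of $\Delta$ that parametrizes $\mathcal{C}$.

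First I would secure the typical centers. Since $C(X)$ is separable and $\mathbf{F}$ is tempered, the Lindenstrauss ergodic theorem (applied on a countable dense subset of $C(X)$ and extended to all of $C(X)$ by uniform approximation, as in the proof of Corollary \ref{Random TSD's, Lindenstrauss}) yields for each $h$ a Borel set $X_h \subseteq \operatorname{supp}(\theta^{(h)}) \cap \operatorname{supp}(\mu)$ with $\theta^{(h)}(X_h) = 1$ such that $\operatorname{Avg}_{F_k} f(x) \to \int f \mathrm{d} \theta^{(h)}$ for every $x \in X_h$ and every $f \in C(X)$. The product $X_1 \times \cdots \times X_n$ then has full probability with respect to $\theta^{(1)} \times \cdots \times \theta^{(n)}$, and we fix any $\bar{x} = (x^{(1)}, \ldots, x^{(n)})$ in this product set.

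Next, I would choose a countable dense subset $\{\bar{\lambda}_i\}_{i \in I} \subseteq \widetilde{\mathcal{C}}$, partition $\mathbb{N}$ into infinite sets $\{\mathcal{N}_i\}_{i \in I}$, and write $i(k)$ for the unique index with $k \in \mathcal{N}_i$. Fix an auxiliary positive sequence $(\rho_k)_{k = 1}^\infty$ that decays $(X, \rho, H, L, \mathbf{F})$-fast. Applying Lemma \ref{Controlling relative measures} with target weights $\bar{\lambda}_{i(k)}$ and upper bounds $\delta^{(h)} = \rho_k$ delivers radii $r_k^{(1)}, \ldots, r_k^{(n)} \in (0, \rho_k)$ satisfying
\begin{equation*}
\frac{\mu(B(x^{(h)}; r_k^{(h)}))}{\sum_{j = 1}^n \mu(B(x^{(j)}; r_k^{(j)}))} = \lambda_{i(k)}^{(h)} \qquad (h = 1, \ldots, n).
\end{equation*}
Because $r_k^{(h)} < \rho_k$ for all $k$ and $h$, each $(r_k^{(h)})_{k = 1}^\infty$ inherits rapid decay in the sense of Definition \ref{Rapid decay}.

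Finally, to identify the limit set I would fix $f \in C(X)$ and apply Lemma \ref{Phasing for multi-balls} to write
\begin{equation*}
\alpha_{B(\bar{x}; \bar{r}_k)}(\operatorname{Avg}_{F_k} f) = \sum_{h = 1}^n \lambda_{i(k)}^{(h)} \alpha_{B(x^{(h)}; r_k^{(h)})}(\operatorname{Avg}_{F_k} f),
\end{equation*}
then invoke Lemma \ref{Singly local pointwise reduction} together with the typicality of each $x^{(h)}$ to obtain $\alpha_{B(x^{(h)}; r_k^{(h)})}(\operatorname{Avg}_{F_k} f) \to \int f \mathrm{d} \theta^{(h)}$. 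Restricting $k$ to $\mathcal{N}_i$ freezes the convex weights at $\bar{\lambda}_i$, so the temporo-spatial average converges weak* to $\Phi(\bar{\lambda}_i) \in \mathcal{C}$; density of $\{\bar{\lambda}_i\}$ in $\widetilde{\mathcal{C}}$ combined with the closedness of $\operatorname{LS}$ in the compact metric space $\mathcal{M}(X)$ then gives $\mathcal{C} \subseteq \operatorname{LS}$. Conversely, any weak*-convergent subsequence can be refined so that $\bar{\lambda}_{i(k_\ell)}$ converges in the compact set $\widetilde{\mathcal{C}}$ to some $\bar{\lambda}_*$, and the weak*-limit is then $\Phi(\bar{\lambda}_*) \in \mathcal{C}$, yielding $\operatorname{LS} \subseteq \mathcal{C}$. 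The main obstacle is juggling three constraints on the radii simultaneously: exact ratio control to hit a dense family of prescribed weights, rapid decay to enable Lemma \ref{Singly local pointwise reduction}, and compatibility with centers drawn from possibly distinct ergodic components. The shell-neglecting hypothesis handles the first via the intermediate-value argument in Lemma \ref{Controlling relative measures}; interleaving with a preselected $(X, \rho, H, L, \mathbf{F})$-fast sequence handles the second; and the tempered-Følner/separability pairing, via Lindenstrauss, handles the third.
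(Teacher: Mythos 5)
Your proposal follows the same four-ingredient strategy as the paper (Lindenstrauss for typical centers, Lemma \ref{Phasing for multi-balls}, Lemma \ref{Singly local pointwise reduction}, Lemma \ref{Controlling relative measures} with a partition of $\mathbb{N}$), and the parametrization via the homeomorphism $\Phi : \Delta \to \operatorname{conv}\{\theta^{(h)}\}$ is a clean framing of the reverse inclusion. However, there is a genuine gap in the step where you write ``Applying Lemma \ref{Controlling relative measures} with target weights $\bar{\lambda}_{i(k)}$.'' That lemma requires the target weights $\lambda^{(1)}, \ldots, \lambda^{(n)}$ to lie in the \emph{open} interval $(0,1)$, and this is not a removable technicality: since each $x^{(h)} \in \operatorname{supp}(\mu)$, every ball $B(x^{(h)}; r)$ with $r > 0$ has strictly positive $\mu$-measure, so the ratio $\mu(B(x^{(h)}; r_k^{(h)}))/\sum_j \mu(B(x^{(j)}; r_k^{(j)}))$ can never equal $0$ or $1$. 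Yet the theorem allows $\mathcal{C}$ to sit on the boundary of the simplex --- e.g.\ $\mathcal{C} = \{\theta^{(1)}\}$, so $\widetilde{\mathcal{C}} = \{(1, 0, \ldots, 0)\}$ --- in which case every $\bar{\lambda}_i$ in your dense subset of $\widetilde{\mathcal{C}}$ has a zero component and the lemma simply cannot be invoked.

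The paper closes this gap by inserting a perturbation step you omitted: for each $k$, it replaces $\bar{\lambda}_{i(k)}$ by a nearby $\bar{t}_k \in (0,1)^n$ with $\sum_h t_k^{(h)} = 1$ and $\sum_h |t_k^{(h)} - \lambda_{i(k)}^{(h)}| < 1/k$, applies Lemma \ref{Controlling relative measures} to $\bar{t}_k$, and then absorbs the $O(1/k)$ discrepancy into the final error estimate. With that modification your argument goes through (the forward inclusion still hits each $\nu_i$ along $\mathcal{N}_i$ because $\bar{t}_k \to \bar{\lambda}_{i(k)}$ pointwise along $k \in \mathcal{N}_i$, and your sequential-compactness argument for $\operatorname{LS} \subseteq \mathcal{C}$ survives after passing to a subsequence along which both $\bar{t}_{k_\ell}$ and $\bar{\lambda}_{i(k_\ell)}$ converge to the same limit in $\widetilde{\mathcal{C}}$). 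The rest of the proposal is sound and matches the paper's proof in substance.
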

	
	\begin{proof}
		Assume without loss of generality that $\theta^{(1)} , \ldots, \theta^{(n)}$ are distinct. By the Lindenstrauss Ergodic Theorem, there exist points $x^{(1)}, \ldots, x^{(n)} \in \operatorname{supp}(\mu)$ such that
		\begin{align*}
			\lim_{k \to \infty} \operatorname{Avg}_{F_k} f \left( x^{(h)} \right)	& = \int f \mathrm{d} \theta^{(h)}	& (h = 1, \ldots, n) .
		\end{align*}
		In fact, the Lindenstrauss Ergodic Theorem tells us that the set of such $\left( x^{(1)}, \ldots, x^{(n)} \right) \in X^n$ is of full measure with respect to $\theta^{(1)} \times \cdots \times \theta^{(n)}$. For the remainder of this proof, let $\bar{x} = \left( x^{(1)} , \ldots, x^{(n)} \right) \in X^n$ be such an $n$-tuple.
		
		For each $i \in I$, let $\bar{\lambda}_i = \left( \lambda_i^{(1)} , \ldots, \lambda_i^{(n)} \right) \in [0, 1]^n$ be such that $\nu_i = \sum_{h = 1}^n \lambda_i^{(h)} \theta^{(h)}$.
		
		Let $\mathscr{N} = \left\{ \mathcal{N}_i : i \in I \right\}$ be a partition of $\mathbb{N}$ into infinite subsets. For each $k \in \mathbb{N}$, set $i(k) \in I$ such that $k \in \mathcal{N}_{i(k)}$. For each $k \in \mathbb{N}$, choose $\bar{t}_k = \left( t_k^{(1)} , \ldots, t_k^{(n)} \right) \in (0, 1)^n$ such that
		\begin{align*}
			\sum_{h = 1}^n \left| t_k^{(h)} - \lambda_{i(k)}^{(h)} \right|	& < 1 / k , \\
			\sum_{h = 1}^n t_k^{(h)}	& = 1 .
		\end{align*}
		For each $k \in \mathbb{N}$, choose $\delta_k > 0$ such that
		$$
		\max_{g \in F_k} \left( L(g) \cdot \delta_k^{H(g)} \right) < 1 / k .
		$$
		Now for each $k \in \mathbb{N}$, use Lemma \ref{Controlling relative measures} to choose $\bar{r}_k = \left( r_k^{(1)} , \ldots, r_k^{(n)} \right) \in (0, 1)^n$ such that
		\begin{align*}
			\frac{\mu \left( B \left( x^{(h)} ; r_k^{(h)} \right) \right)}{ \mu \left( B \left( x^{(1)} ; r_k^{(1)} \right) \right) + \cdots + \mu \left( B \left( x^{(n)} ; r_k^{(n)} \right) \right)}	& = t_k^{(h)} , \\
			r_k^{(h)}	& < \delta_k , \\
			r_k^{(h)}	& < \frac{1}{3} \min \left\{ \rho \left( x^{(h_1)} , x^{(h_2)} \right) : 1 \leq h_1 < h_2 \leq n \right\} .
		\end{align*}
		The last condition ensures that the balls $\left\{ B \left( x^{(h)} ; r_k^{(h)} \right) : h = 1, \ldots, n \right\}$ are pairwise disjoint. Since the points $x^{(h)}$ each satisfy
		$$
		\lim_{k \to \infty} \operatorname{Avg}_{F_k} f \left( x^{(h)} \right) = \int f \mathrm{d} \theta^{(h)} ,
		$$
		for all $f \in C(X)$, and the measures $\theta^{(1)} , \ldots, \theta^{(h)}$ are distinct, it follows that the $x^{(1)} , \ldots, x^{(n)}$ are also distinct, meaning that $\min \left\{ \rho \left( x^{(h_1)} , x^{(h_2)} \right) : 1 \leq h_1 < h_2 \leq n \right\} > 0$.
		
		Let $\mathcal{L} \subseteq C(X)$ denote the family of all continuous functions $f$ on $X$ such that
		\begin{align*}
			|f(x) - f(y)|	& \leq \rho(x, y)	& (\forall x, y \in X),
		\end{align*}
		i.e. the $1$-Lipschitz functions $X \to \mathbb{C}$, and let $f \in \mathcal{L}$. Then
		\begin{align*}
			& \left| \alpha_{B \left( \bar{x}, \bar{r}_k \right)} \left( \operatorname{Avg}_{F_k} f \right) - \int f \mathrm{d} \nu_{i(k)} \right| \\
			[\textrm{Lem. \ref{Phasing for multi-balls}}]=	& \left| \left[ \sum_{h = 1}^n \frac{\mu\left( B \left( x^{(h)} ; r_k^{(h)} \right) \right)}{\sum_{u = 1}^n \mu \left( B \left( x^{(u)} ; r^{(u)} \right) \right)} \alpha_{B \left( x^{(h)} ; r_k^{(h)} \right)} \left( \operatorname{Avg}_{F_k} f \right) \right] - \int f \mathrm{d} \nu_{i(k)} \right| \\
			=	& \left| \left[ \sum_{h = 1}^n t_k^{(h)} \alpha_{B \left( x^{(h)} ; r_k^{(h)} \right)} \left( \operatorname{Avg}_{F_k} f \right) \right] - \int f \mathrm{d} \nu_{i(k)} \right| \\
			=	& \left| \sum_{h = 1}^n \left( t_k^{(h)} \alpha_{B \left( x^{(h)} ; r_k^{(h)} \right)} \left( \operatorname{Avg}_{F_k} f\right) - \lambda_{i(k)}^{(h)} \int f \mathrm{d} \theta^{(h)} \right) \right| \\
			\leq	& \sum_{h = 1}^n \left| t_k^{(h)} \alpha_{B \left( x^{(h)} ; r_k^{(h)} \right)} \left( \operatorname{Avg}_{F_k} f\right) - \lambda_{i(k)}^{(h)} \int f \mathrm{d} \theta^{(h)} \right| \\
			\leq	& \sum_{h = 1}^n \left[ \left| t_k^{(h)} \alpha_{B \left( x^{(h)} ; r_k^{(h)} \right)} \left( \operatorname{Avg}_{F_k} f\right) - t_k^{(h)} \int f \mathrm{d} \theta^{(h)} \right| + \left| \left( t_k^{(h)} - \lambda_{i(k)}^{(h)} \right) \int f \mathrm{d} \theta^{(h)} \right| \right] \\
			\leq	& \left[ \sum_{h = 1}^n t_k^{(h)} \left| \alpha_{B \left( x^{(h)} ; r_k^{(h)} \right)} \left( \operatorname{Avg}_{F_k} f\right) - \int f \mathrm{d} \theta^{(h)} \right| \right] + \frac{\| f\|_{C(X)}}{k} .
		\end{align*}
		We can then estimate
		\begin{align*}
			& \left| \alpha_{B \left( x^{(h)} ; r_k^{(h)} \right)} \left( \operatorname{Avg}_{F_k} f\right) - \int f \mathrm{d} \theta^{(h)} \right| \\
			\leq	& \left| \alpha_{B \left( x^{(h)} ; r_k^{(h)} \right)} \left( \operatorname{Avg}_{F_k} f \right) - \operatorname{Avg}_{F_k} f \left( x^{(h)} \right) \right| + \left| \operatorname{Avg}_{F_k} f \left( x^{(h)} \right) - \int f \mathrm{d} \theta^{(h)} \right|
		\end{align*}
		Since $r_k^{(h)} < \delta_k$ for all $k \in \mathbb{N}$, it follows that if $\rho \left( x^{(h)} , y \right) < r_k^{(h)}$, then $\rho \left( T_g x^{(h)} , T_g y \right) < 1 / k$ for $g \in F_k$. Since $f$ is $1$-Lipschitz, it follows that $\left| f\left( T_g x^{(h)} \right) - f \left( T_g y \right) \right| < 1 / k$ for all $g \in F_k$. Thus
		\begin{align*}
			& \left| \alpha_{B \left( x^{(h)} ; r_k^{(h)} \right)} \left( \operatorname{Avg}_{F_k} f \right) - \operatorname{Avg}_{F_k} f \left( x^{(h)} \right) \right| \\
			=	& \left| \frac{1}{\mu \left( B \left( x^{(h)} ; r_k^{(h)} \right) \right)} \int_{B \left( x^{(h)} ; r_k^{(h)} \right)} \frac{1}{|F_k|} \sum_{g \in F_k} \left( f(T_g y) - f \left( T_g x^{(h)} \right) \right) \mathrm{d} \mu(y) \right| \\
			\leq	& \frac{1}{\mu \left( B \left( x^{(h)} ; r_k^{(h)} \right) \right)} \int_{B \left( x^{(h)} ; r_k^{(h)} \right)} \frac{1}{|F_k|} \sum_{g \in F_k} \left| f(T_g y) - f \left( T_g x^{(h)} \right) \right| \mathrm{d} \mu(y) \\
			<	& \frac{1}{\mu \left( B \left( x^{(h)} ; r_k^{(h)} \right) \right)} \int_{B \left( x^{(h)} ; r_k^{(h)} \right)} \frac{1}{|F_k|} \sum_{g \in F_k} \frac{1}{k} \mathrm{d} \mu(y) \\
			=	& \frac{1}{k} .
		\end{align*}
		Therefore
		\begin{align*}
			& \left| \alpha_{B \left( \bar{x}, \bar{r}_k \right)} \left( \operatorname{Avg}_{F_k} f \right) - \int f \mathrm{d} \nu_{i(k)} \right| \\
			\leq	& \left[ \sum_{h = 1}^n t_k^{(h)} \left| \alpha_{B \left( x^{(h)} ; r_k^{(h)} \right)} \left( \operatorname{Avg}_{F_k} f\right) - \int f \mathrm{d} \theta^{(h)} \right| \right] + \frac{\| f\|_{C(X)}}{k} \\
			\leq	& \left[ \sum_{h = 1}^n t_k^{(h)} \left( \left| \alpha_{B \left( x^{(h)} ; r_k^{(h)} \right)} \left( \operatorname{Avg}_{F_k} f \right) - \operatorname{Avg}_{F_k} f \left( x^{(h)} \right) \right| + \left| \operatorname{Avg}_{F_k} f \left( x^{(h)} \right) - \int f \mathrm{d} \theta^{(h)} \right| \right) \right] \\
				& + \frac{\| f\|_{C(X)}}{k} \\
			=	& \left[ \sum_{h = 1}^n t_k^{(h)} \left( \frac{1}{k} + \left| \operatorname{Avg}_{F_k} f \left( x^{(h)} \right) - \int f \mathrm{d} \theta^{(h)} \right| \right) \right] + \frac{\| f\|_{C(X)}}{k} \\
			=	& \frac{1}{k} + \left[ \sum_{h = 1}^n t_k^{(h)} \left| \operatorname{Avg}_{F_k} f \left( x^{(h)} \right) - \int f \mathrm{d} \theta^{(h)} \right| \right] + \frac{\|f\|_{C(X)}}{k} .
		\end{align*}
		
		Let $\left\{ f_m : m \in \mathbb{N} \right\}$ be a countable family of functions in $\mathcal{L}$ that densely span $C(X)$, and let $\operatorname{dist} : \mathcal{M}(X) \times \mathcal{M}(X) \to [0, 1]$ be the metric
		$$\operatorname{dist}(\beta_1, \beta_2) = \sum_{m = 1}^\infty 2^{-m} \min \left\{ \left| \int f_m \mathrm{d} (\beta_1 - \beta_2) \right| , 1 \right\} .$$
		This $\operatorname{dist}$ metric is compatible with the weak*-topology on $\mathcal{M}(X)$. We can also say that for all $M \in \mathbb{N}$, we have
		\begin{align*}
			& \operatorname{dist} \left( \alpha_{B \left( \bar{x} , \bar{r}_k \right)} \circ \operatorname{Avg}_{F_k} , \nu_{i(k)} \right) \\
			\leq	& \left[ \sum_{m = 1}^M 2^{-m} \left| \alpha_{B \left( \bar{x}, \bar{r}_k \right)} \left( \operatorname{Avg}_{F_k} f_m \right) - \int f_m \mathrm{d} \nu_{i(k)} \right| \right] + \sum_{m = M + 1}^\infty 2^{-m} \\
			\leq	& \left[ \sum_{m = 1}^M 2^{-m} \left[ \frac{1}{k} + \left[ \sum_{h = 1}^n t_k^{(h)} \left| \operatorname{Avg}_{F_k} f_m \left( x^{(h)} \right) - \int f_m \mathrm{d} \theta^{(h)} \right| \right] + \frac{\|f_m\|_{C(X)}}{k} \right] \right] + 2^{-M} \\
			\leq	& \frac{1 + \max_{1 \leq m \leq M} \|f_m\|_{C(X)}}{k} + 2^{-M} + \max_{1 \leq m \leq M} \max_{1 \leq h \leq n} \left| \operatorname{Avg}_{F_k} f_m \left( x^{(h)} \right) - \int f_m \mathrm{d} \theta^{(h)} \right|
		\end{align*}
		
		We claim that $\operatorname{LS} \left( \left( \alpha_{B \left( \bar{x}, \bar{r}_k \right)} \circ \operatorname{Avg}_{F_k} \right)_{k = 1}^\infty \right) = \mathcal{C}$.
		
		First, let $\nu \in \mathcal{C}$. Choose a sequence $\left( \nu_{i_\ell} \right)_{\ell = 1}^\infty$ such that $\operatorname{dist} \left( \nu, \nu_{i_\ell} \right) < 1 / \ell$. Choose $k_1 < k_2 < \cdots$ such that
		\begin{align*}
			k \geq k_\ell	& \Rightarrow \left| \operatorname{Avg}_{F_k} f_m \left( x^{(h)} \right) - \int f_m \mathrm{d} \theta^{(h)} \right| \leq \frac{1}{\ell}	& (m = 1, \ldots, \ell; h = 1, \ldots, n) , \\
			k_\ell	& \geq \ell \left( 1 + \max_{1 \leq m \leq \ell} \|f_m\|_{C(X)} \right) , \\
			k_\ell	& \in \mathcal{N}_{i_\ell}
		\end{align*}
		for all $\ell \in \mathbb{N}$. Then
		\begin{align*}
			& \operatorname{dist} \left( \alpha_{B \left( \bar{x} , \bar{r}_{k_\ell} \right)} \circ \operatorname{Avg}_{F_{k_\ell}} , \nu \right) \\
			\leq	& \operatorname{dist} \left( \alpha_{B \left( \bar{x} , \bar{r}_{k_\ell} \right)} \circ \operatorname{Avg}_{F_{k_\ell}} , \nu_{i \left( k_\ell \right)} \right) + \operatorname{dist} \left( \nu_{i_\ell} , \nu \right) \\
			\leq	& \left[ \frac{1 + \max_{1 \leq m \leq \ell} \|f_m\|_{C(X)}}{k_\ell} + 2^{-M} + \max_{1 \leq m \leq \ell} \max_{1 \leq h \leq n} \left| \operatorname{Avg}_{F_{k_\ell}} f_m \left( x^{(h)} \right) - \int f_m \mathrm{d} \theta^{(h)} \right| \right] \\
				& + \frac{1}{\ell} \\
			\leq	& \frac{1}{\ell} + 2^{-\ell} + \frac{1}{\ell} + \frac{1}{\ell} \\
			\stackrel{\ell \to \infty}{\to}	& 0 .
		\end{align*}
		Therefore $\nu \in \operatorname{LS} \left( \left( \alpha_{B \left( \bar{x}, \bar{r}_k \right)} \circ \operatorname{Avg}_{F_k} \right)_{k = 1}^\infty \right)$, meaning that $\mathcal{C} \subseteq \operatorname{LS} \left( \left( \alpha_{B \left( \bar{x}, \bar{r}_k \right)} \circ \operatorname{Avg}_{F_k} \right)_{k = 1}^\infty \right)$.
		
		To prove the opposite containment, suppose $\gamma \in \operatorname{LS} \left( \left( \alpha_{B \left( \bar{x}, \bar{r}_k \right)} \circ \operatorname{Avg}_{F_k} \right)_{k = 1}^\infty \right)$, and let $k_1 < k_2 < \cdots$ such that $\gamma = \lim_{\ell \to \infty} \alpha_{B \left( \bar{x}, \bar{r}_{k_\ell} \right)} \circ \operatorname{Avg}_{F_{k_\ell}}$. Fix $f \in \mathcal{L}$. Then
		\begin{align*}
			& \left| \int f \mathrm{d} \gamma - \int f \mathrm{d} \nu_{i \left( k_\ell \right)} \right| \\
			\leq	& \left| \int f \mathrm{d} \gamma - \alpha_{B \left( \bar{x}, \bar{r}_{k_\ell} \right)} \left( \operatorname{Avg}_{F_{k_\ell}} f \right) \right| + \left| \alpha_{B \left( \bar{x}, \bar{r}_{k_\ell} \right)} \left( \operatorname{Avg}_{F_{k_\ell}} f \right) - \int f \mathrm{d} \nu_{i \left( k_\ell \right)} \right| \\
			\leq	& \left| \int f \mathrm{d} \gamma - \alpha_{B \left( \bar{x}, \bar{r}_{k_\ell} \right)} \left( \operatorname{Avg}_{F_{k_\ell}} f \right) \right| + \frac{1}{k_\ell} \\
				& + \left[ \sum_{h = 1}^n t_{k_\ell}^{(h)} \left| \operatorname{Avg}_{F_{k_\ell}} f \left( x^{(h)} \right) - \int f \mathrm{d} \theta^{(h)} \right| \right] + \frac{\|f\|_{C(X)}}{k_\ell} \\
			\leq	& \left| \int f \mathrm{d} \gamma - \alpha_{B \left( \bar{x}, \bar{r}_{k_\ell} \right)} \left( \operatorname{Avg}_{F_{k_\ell}} f \right) \right| + \frac{1}{k_\ell} \\
				& + \left[ \max_{1 \leq h \leq n} \left| \operatorname{Avg}_{F_{k_\ell}} f \left( x^{(h)} \right) - \int f \mathrm{d} \theta^{(h)} \right| \right] + \frac{\|f\|_{C(X)}}{k_\ell} \\
			\stackrel{\ell \to \infty}{\to}	& 0 .
		\end{align*}
		Therefore $\gamma = \lim_{\ell \to \infty} \nu_{i\left( k_\ell \right)}$, meaning that $\gamma \in \mathcal{C}$. Thus $\operatorname{LS} \left( \left( \alpha_{B \left( \bar{x}, \bar{r}_k \right)} \circ \operatorname{Avg}_{F_k} \right)_{k = 1}^\infty \right) \subseteq \mathcal{C}$.
	\end{proof}
	
	In Theorem \ref{Targeting finite-dimensional subsets}, our assumption that $\mathcal{C}$ live in a finite-dimensional subset of $\mathcal{M}_T(X)$ helps us place an upper bound on $\operatorname{LS} \left( \alpha_{B(\bar{x}; \bar{r}_k)} \circ \operatorname{Avg}_{F_k} \right)_{k = 1}^\infty$, i.e. show that $\operatorname{LS} \left( \alpha_{B(\bar{x}; \bar{r}_k)} \circ \operatorname{Avg}_{F_k} \right)_{k = 1}^\infty \subseteq \mathcal{C}$. In general, it is possible to construct $(C_k)_{k = 1}^\infty$ for which $\operatorname{LS} \left( \alpha_{B(\bar{x}; \bar{r}_k)} \circ \operatorname{Avg}_{F_k} \right)_{k = 1}^\infty$ is ``maximally large," as the following theorem shows.
	
	\begin{Thm}\label{Go big or go home}
		Suppose $\mu$ is a Borel probability measure on $X$. Then there exists a sequence $(C_k)_{k = 1}^\infty$ of multi-balls in $X$ such that
		$$
		\operatorname{LS} \left( \left( \alpha_{C_k} \circ \operatorname{Avg}_{F_k} \right)_{k = 1}^\infty \right) = \mathcal{M}_T(X) .
		$$
	\end{Thm}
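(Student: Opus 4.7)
The plan is to invoke Theorem \ref{Targeting finite-dimensional subsets} repeatedly, once for each element of a carefully chosen countable dense subset of $\mathcal{M}_T(X)$, and then interleave the resulting multi-ball sequences via a diagonal construction so that each target measure is approached along some subsequence.

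First I would note that $\mathcal{M}_T(X)$ is weak*-compact and metrizable (since $C(X)$ is separable), and is nonempty by Lemma \ref{Invariance}. Since $\mathcal{M}_T(X)$ is a Choquet simplex, the Krein--Milman theorem implies that the convex hull of the ergodic measures $\partial_e \mathcal{M}_T(X)$ is weak*-dense in $\mathcal{M}_T(X)$. Combining this density with metrizability, I would choose a countable weak*-dense collection $\{\nu_i\}_{i \in \mathbb{N}} \subseteq \mathcal{M}_T(X)$ in which every $\nu_i$ has the form
$$\nu_i = \sum_{h = 1}^{n_i} \lambda_i^{(h)} \theta_i^{(h)},$$
where $\theta_i^{(1)}, \ldots, \theta_i^{(n_i)} \in \partial_e \mathcal{M}_T(X)$ and $\lambda_i^{(h)} \in (0, 1]$ with $\sum_h \lambda_i^{(h)} = 1$. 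Concretely, I would take a countable dense subset of $\mathcal{M}_T(X)$ and use Krein--Milman to approximate each member to within $1/i$ by a finite convex combination of ergodics.

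Next, for each $i \in \mathbb{N}$, I would apply Theorem \ref{Targeting finite-dimensional subsets} with the singleton target $\mathcal{C} = \{\nu_i\}$ in the convex hull of $\bigl\{ \theta_i^{(1)}, \ldots, \theta_i^{(n_i)} \bigr\}$. This yields an $n_i$-tuple $\bar{x}_i = \bigl( x_i^{(1)}, \ldots, x_i^{(n_i)} \bigr) \in X^{n_i}$ and a sequence of multi-radii $(\bar{r}_{i,k})_{k = 1}^\infty$ such that
$$\operatorname{LS}\!\left(\left( \alpha_{B(\bar{x}_i;\, \bar{r}_{i,k})} \circ \operatorname{Avg}_{F_k} \right)_{k = 1}^\infty \right) = \{\nu_i\}.$$
By weak*-compactness of $\mathcal{M}(X)$, this forces full convergence $\alpha_{B(\bar{x}_i;\, \bar{r}_{i,k})} \circ \operatorname{Avg}_{F_k} \to \nu_i$ as $k \to \infty$. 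Now I partition $\mathbb{N} = \bigsqcup_{i = 1}^\infty \mathcal{N}_i$ into pairwise disjoint infinite subsets, and define the multi-ball $C_k := B(\bar{x}_i;\, \bar{r}_{i,k})$ whenever $k \in \mathcal{N}_i$.

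To finish, I would verify the two containments. For $\supseteq$: for each $i$, the subsequence $(C_k)_{k \in \mathcal{N}_i}$ satisfies $\alpha_{C_k} \circ \operatorname{Avg}_{F_k} \to \nu_i$ (as a subsequence of a convergent sequence), so every $\nu_i$ lies in $\operatorname{LS}\bigl((\alpha_{C_k} \circ \operatorname{Avg}_{F_k})_{k = 1}^\infty\bigr)$. Since the limit set is weak*-closed and $\{\nu_i\}$ is dense in $\mathcal{M}_T(X)$, we obtain $\mathcal{M}_T(X) \subseteq \operatorname{LS}$. The containment $\operatorname{LS} \subseteq \mathcal{M}_T(X)$ is immediate from Lemma \ref{Invariance}, because $\mathbf{F}$ is a \Folner{} sequence.

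The main obstacle is the first step: producing a \emph{countable} dense family of finite convex combinations of ergodic measures, given that $\partial_e \mathcal{M}_T(X)$ itself may be uncountable. This is handled by the Krein--Milman density argument sketched above, but requires care to ensure each $\nu_i$ is of the exact form needed to apply Theorem \ref{Targeting finite-dimensional subsets}. A secondary concern is that Theorem \ref{Targeting finite-dimensional subsets} was stated for tempered $\mathbf{F}$ and for $\mu$ neglecting shells; I would proceed assuming these hypotheses are in force (either implicitly in this theorem's statement or as additional assumptions to add), and otherwise the same interleaving strategy applies to any construction that targets single measures from $\mathcal{M}_T(X)$ by multi-balls.
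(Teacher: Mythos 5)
Your overall strategy---take a countable dense family of finite convex combinations of ergodic measures, partition $\mathbb{N}$ into infinite blocks $\mathcal{N}_i$, and build multi-balls so that along each block the averages converge to $\nu_i$---is exactly the paper's plan, and your verification of the two containments at the end (density plus closedness for $\supseteq$, Lemma~\ref{Invariance} for $\subseteq$) matches the paper. You are also right that it is natural to organize the argument by invoking Theorem~\ref{Targeting finite-dimensional subsets} with singleton targets, and the compactness observation that a singleton limit set forces full convergence is correct.

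However, there is a genuine gap that you flag but do not resolve, and it is not a cosmetic one. Theorem~\ref{Targeting finite-dimensional subsets} hypothesizes that $\mathbf{F}$ is a \emph{tempered} \Folner{} sequence, whereas the section-level standing assumption for this theorem is only that $\mathbf{F}$ is \Folner. The paper's own proof deliberately avoids assuming temperedness of $\mathbf{F}$: it uses Lemma~\ref{Tempered subsequence} to extract, within each block $\mathcal{N}_i$, a tempered subsequence $\left(F_{\kappa(i,\ell)}\right)_{\ell=1}^\infty$, applies the Lindenstrauss Ergodic Theorem only along those subsequences to find typical points $x^{(h)}$, and then constructs the radii $r_k^{(h)}$ directly for \emph{every} $k$ (so that $C_k$ is defined everywhere, with the convergence $\alpha_{C_{\kappa(i,\ell)}} \circ \operatorname{Avg}_{F_{\kappa(i,\ell)}} \to \nu_i$ guaranteed along each subsequence $\kappa(i,\cdot)$). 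Your black-box invocation of Theorem~\ref{Targeting finite-dimensional subsets} would give convergence of $\alpha_{B(\bar{x}_i;\bar{r}_{i,k})} \circ \operatorname{Avg}_{F_k}$ along the \emph{full} index set $k \in \mathbb{N}$, which really does require $\mathbf{F}$ tempered; so as written you prove the theorem only under that additional hypothesis. To recover the general \Folner{} case you would essentially be forced to reproduce the paper's $\kappa$-subsequence trick, at which point the black-box modularity mostly disappears. (The concern you raise about $\mu$ neglecting shells is, on the other hand, an omission shared with the paper: the paper's proof also invokes Lemma~\ref{Controlling relative measures}, which needs $\mu$ to neglect shells and the chosen points to lie in $\operatorname{supp}(\mu)$, even though the theorem statement does not say so.)
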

	
	\begin{proof}
		Since $\operatorname{LS} \left( \left( \alpha_{C_k} \circ \operatorname{Avg}_{F_k} \right)_{k = 1}^\infty \right)$ is always a closed subset of $\mathcal{M}_T(X)$, it will suffice to construct $(C_k)_{k = 1}^\infty$ such that $\operatorname{LS} \left( \left( \alpha_{C_k} \circ \operatorname{Avg}_{F_k} \right)_{k = 1}^\infty \right)$ is dense in $\mathcal{M}_T(X)$.
		
		Let $\mathcal{E} = \left\{ \theta^{(h)} : h \in \mathbb{N} \right\} \subseteq \partial_e \mathcal{M}_T(X)$ be a countable dense subset of $\partial_e \mathcal{M}_T(X)$, and set
		$$\mathcal{F} = \left\{ \sum_{h = 1}^{n} \lambda^{(h)} \theta^{(h)} : n \in \mathbb{N} , \bar{\lambda} \in [0, 1]^n \cap \mathbb{Q}^n, \sum_{h = 1}^n \lambda^{(h)} = 1 \right\} ,$$
		i.e. $\mathcal{F}$ is the set of all rational convex combinations of elements of $\mathcal{E}$. Assume that the $\theta^{(h)} , h \in \mathbb{N}$ are distinct. By the Krein-Millman Theorem, the set $\mathcal{F}$ is a countable dense subset of $\mathcal{M}_T(X)$. Let $\left\{ \nu_i : i \in I \right\}$ be an enumeration of $\mathcal{F}$, where $I$ is some countable indexing set, and let $\mathscr{N} = \left\{ \mathcal{N}_i : i \in I \right\}$ be a partition of $\mathbb{N}$ into countably infinitely many infinite subsets.
		
		For each $i \in I$, let $\left( \kappa(i, \ell) \right)_{\ell = 1}^\infty$ be a strictly increasing sequence such that
		\begin{align*}
			\kappa(i, \ell)	& \in \mathcal{N}_i , \\
			\left( F_{\kappa(i, \ell)} \right)_{\ell = 1}^\infty	& \textrm{is tempered},
		\end{align*}
		which exists by Lemma \ref{Tempered subsequence}.
		
		We are going to construct $(C_k)_{k = 1}^\infty$ such that $\lim_{\ell \to \infty} \alpha_{C_{\kappa(i, \ell)}} \circ \operatorname{Avg}_{F_{\kappa(i, \ell)}} = \nu_i$ for all $i \in I$. For each $k \in \mathbb{N}$, set $i(k) \in I$ such that $k \in \mathcal{N}_{i(k)}$.
		
		For each $i \in I$, choose $\bar{\lambda}_i \in \left( [0, 1] \cap \mathbb{Q} \right)^\mathbb{N}$ and $n_i \in \mathbb{N}$ such that
		\begin{align*}
			\sum_{h = 1}^{n_i} \lambda_i^{(h)} \theta^{(h)}	& = \nu_i , \\
			\sum_{h = 1}^{n_i} \lambda_i^{(h)}	& = 1 , \\
			\lambda_{i}^{(h)}	& = 0 & \textrm{for all $h > n_i$}.
		\end{align*}
		By the Lindenstrauss Ergodic Theorem, there exists for each $\theta^{(h)}$ a point $x^{(h)} \in X$ such that
		\begin{align*}
			\lim_{\ell \to \infty} \operatorname{Avg}_{F_{\kappa(i, \ell)}} f \left( x^{(h)} \right)	& = \int f \mathrm{d} \theta^{(h)}	& (\forall f \in C(X) , \; \forall i \in I) .
		\end{align*}
		
		For each $k \in \mathbb{N}$, choose $\bar{t}_k = \left( t_k^{(1)} , \ldots, t_k^{\left( n_{i(k)} \right)} \right) \in (0, 1)^{n_{i(k)}}$ such that
		\begin{align*}
			\sum_{h = 1}^{n_{i(k)}} \left| t_k^{(h)} - \lambda_{i(k)}^{(h)} \right|	& < 1 / k , \\
			\sum_{h = 1}^{n_{i(k)}} t_k^{(h)}	& = 1 .
		\end{align*}
		For each $k \in \mathbb{N}$, choose $\delta_k > 0$ such that
		$$
		\max_{g \in F_k} \left( L(g) \cdot \delta_k^{H(g)} \right) < 1 / k .
		$$
		Now for each $k \in \mathbb{N}$, use Lemma \ref{Controlling relative measures} to choose $r_k^{(1)} , \ldots, r_k^{\left(n_{i(k)} \right)} \in (0, 1)$ such that
		\begin{align*}
			t_k^{(j)}	& = \frac{\mu \left( B \left( x^{(h)} ; r_k^{(h)} \right) \right)}{ \mu \left( B \left( x^{(1)} ; r_k^{(1)} \right) \right) + \cdots + \mu \left( B \left( x^{\left( n_{i(k)} \right)} ; r_k^{\left(n_{i(k)}\right)} \right) \right)} , \\
			r_k^{(h)}	& < \delta_k , \\
			r_k^{(h)}	& < \frac{1}{3} \min \left\{ \rho \left( x^{(h_1)} , x^{(h_2)} \right) : 1 \leq h_1 < h_2 \leq n_{i(k)} \right\} .
		\end{align*}
		The last condition ensures that the balls $\left\{ B \left( x^{(h)} ; r_k^{(h)} \right) : h = 1, \ldots, n_{i(k)} \right\}$ are pairwise disjoint. Since the points $x^{(h)}$ each satisfy
		$$
		\lim_{\ell \to \infty} \operatorname{Avg}_{F_{\kappa(i, \ell)}} f \left( x^{(h)} \right) = \int f \mathrm{d} \theta^{(h)}
		$$
		for all $f \in C(X) , i \in I$, and the measures $\theta^{(h)}$ are distinct, it follows that the $x^{(h)}$ are also distinct, meaning that $\min \left\{ \rho \left( x^{(h_1)} , x^{(h_2)} \right) : 1 \leq h_1 < h_2 \leq n_{i(k)} \right\} > 0$.
		
		For each $k \in \mathbb{N}$, set
		$$C_k = B \left( x^{(1)} , \ldots , x^{\left( n_{i(k)} \right)} ; r_k^{(1)} , \ldots, r_k^{\left( n_{i(k)} \right)} \right) .$$
		We now show that
		\begin{align*}
			\lim_{\ell \to \infty} \alpha_{ C_{\kappa(i, \ell)} } \left( \operatorname{Avg}_{F_{\kappa(i, \ell)}} f \right)	& = \int f \mathrm{d} \nu_i	& (\forall f \in C(X) , \; \forall i \in I) .
		\end{align*}
		In light of Lemma \ref{1-Lipschitz functions dense span}, it will suffice to prove the convergence for $f \in \mathcal{L}$, where
		$$\mathcal{L} = \left\{ \phi \in C(X) : \forall x \in X \; \forall y \in X \; \left( \left| \phi(x) - \phi(y) \right| \leq \rho(x, y) \right) \right\} $$
		is the family of all $1$-Lipschitz functions. We see
		\begin{align*}
			& \left| \alpha_{C_k} \left( \operatorname{Avg}_{F_k} f \right) - \int f \mathrm{d} \nu_{i(k)} \right| \\
			[\textrm{Lem. \ref{Phasing for multi-balls}}]=	& \left| \left[ \sum_{h = 1}^{n_{i(k)}} \frac{\mu\left( B \left( x^{(h)} ; r_k^{(h)} \right) \right)}{\sum_{u = 1}^n \mu \left( B \left( x^{(u)} ; r^{(u)} \right) \right)} \alpha_{B \left( x^{(h)} ; r_k^{(h)} \right)} \left( \operatorname{Avg}_{F_k} f \right) \right] - \int f \mathrm{d} \nu_{i(k)} \right| \\
			=	& \left| \left[ \sum_{h = 1}^{n_{i(k)}} t_k^{(h)} \alpha_{B \left( x^{(h)} ; r_k^{(h)} \right)} \left( \operatorname{Avg}_{F_k} f \right) \right] - \int f \mathrm{d} \nu_{i(k)} \right| \\
			=	& \left| \sum_{h = 1}^{n_{i(k)}} \left( t_k^{(h)} \alpha_{B \left( x^{(h)} ; r_k^{(h)} \right)} \left( \operatorname{Avg}_{F_k} f\right) - \lambda_{i(k)}^{(h)} \int f \mathrm{d} \theta^{(h)} \right) \right| \\
			\leq	& \sum_{h = 1}^{n_{i(k)}} \left| t_k^{(h)} \alpha_{B \left( x^{(h)} ; r_k^{(h)} \right)} \left( \operatorname{Avg}_{F_k} f\right) - \lambda_{i(k)}^{(h)} \int f \mathrm{d} \theta^{(h)} \right| \\
			\leq	& \sum_{h = 1}^{n_{i(k)}} \left[ \left| t_k^{(h)} \alpha_{B \left( x^{(h)} ; r_k^{(h)} \right)} \left( \operatorname{Avg}_{F_k} f\right) - t_k^{(h)} \int f \mathrm{d} \theta^{(h)} \right| + \left| \left( t_k^{(h)} - \lambda_{i(k)}^{(h)} \right) \int f \mathrm{d} \theta^{(h)} \right| \right] \\
			\leq	& \left[ \sum_{h = 1}^{n_{i(k)}} t_k^{(h)} \left| \alpha_{B \left( x^{(h)} ; r_k^{(h)} \right)} \left( \operatorname{Avg}_{F_k} f\right) - \int f \mathrm{d} \theta^{(h)} \right| \right] + \frac{\| f \|_{C(X)}}{k} .
		\end{align*}
		We can then estimate
		\begin{align*}
			& \left| \alpha_{B \left( x^{(h)} ; r_k^{(h)} \right)} \left( \operatorname{Avg}_{F_k} f\right) - \int f \mathrm{d} \theta^{(h)} \right| \\
			\leq	& \left| \alpha_{B \left( x^{(h)} ; r_k^{(h)} \right)} \left( \operatorname{Avg}_{F_k} f \right) - \operatorname{Avg}_{F_k} f \left( x^{(h)} \right) \right| + \left| \operatorname{Avg}_{F_k} f \left( x^{(h)} \right) - \int f \mathrm{d} \theta^{(h)} \right|
		\end{align*}
		Since $r_k^{(h)} < \delta_k$ for all $k \in \mathbb{N}$, it follows that if $\rho \left( x^{(h)} , y \right) < r_k^{(h)}$, then $\rho \left( T_g x^{(h)} , T_g y \right) < 1 / k$ for $g \in F_k$. Since $f$ is $1$-Lipschitz, it follows that $\left| f\left( T_g x^{(h)} \right) - f \left( T_g y \right) \right| < 1 / k$ for all $g \in F_k$. Thus
		\begin{align*}
			& \left| \alpha_{B \left( x^{(h)} ; r_k^{(h)} \right)} \left( \operatorname{Avg}_{F_k} f \right) - \operatorname{Avg}_{F_k} f \left( x^{(h)} \right) \right| \\
			=	& \left| \frac{1}{\mu \left( B \left( x^{(h)} ; r_k^{(h)} \right) \right)} \int_{B \left( x^{(h)} ; r_k^{(h)} \right)} \frac{1}{|F_k|} \sum_{g \in F_k} \left( f(T_g y) - f \left( T_g x^{(h)} \right) \right) \mathrm{d} \mu(y) \right| \\
			\leq	& \frac{1}{\mu \left( B \left( x^{(h)} ; r_k^{(h)} \right) \right)} \int_{B \left( x^{(h)} ; r_k^{(h)} \right)} \frac{1}{|F_k|} \sum_{g \in F_k} \left| f(T_g y) - f \left( T_g x^{(h)} \right) \right| \mathrm{d} \mu(y) \\
			<	& \frac{1}{\mu \left( B \left( x^{(h)} ; r_k^{(h)} \right) \right)} \int_{B \left( x^{(h)} ; r_k^{(h)} \right)} \frac{1}{|F_k|} \sum_{g \in F_k} \frac{1}{k} \mathrm{d} \mu(y) \\
			=	& \frac{1}{k} .
		\end{align*}
		Therefore
		\begin{align*}
			& \left| \alpha_{C_k} \left( \operatorname{Avg}_{F_k} f \right) - \int f \mathrm{d} \nu_{i(k)} \right| \\
			\leq	& \left[ \sum_{h = 1}^{n_{i(k)}} t_k^{(h)} \left| \alpha_{B \left( x^{(h)} ; r_k^{(h)} \right)} \left( \operatorname{Avg}_{F_k} f\right) - \int f \mathrm{d} \theta^{(h)} \right| \right] + \frac{\| f\|_{C(X)}}{k} \\
			\leq	& \left[ \sum_{h = 1}^{n_{i(k)}} t_k^{(h)} \left( \left| \alpha_{B \left( x^{(h)} ; r_k^{(h)} \right)} \left( \operatorname{Avg}_{F_k} f \right) - \operatorname{Avg}_{F_k} f \left( x^{(h)} \right) \right| + \left| \operatorname{Avg}_{F_k} f \left( x^{(h)} \right) - \int f \mathrm{d} \theta^{(h)} \right| \right) \right] \\
				& + \frac{\| f\|_{C(X)}}{k} \\
			=	& \left[ \sum_{h = 1}^{n_{i(k)}} t_k^{(h)} \left( \frac{1}{k} + \left| \operatorname{Avg}_{F_k} f \left( x^{(h)} \right) - \int f \mathrm{d} \theta^{(h)} \right| \right) \right] + \frac{\| f\|_{C(X)}}{k} \\
			=	& \frac{1}{k} + \left[ \sum_{h = 1}^{n_{i(k)}} t_k^{(h)} \left| \operatorname{Avg}_{F_k} f \left( x^{(h)} \right) - \int f \mathrm{d} \theta^{(h)} \right| \right] + \frac{\|f\|_{C(X)}}{k} .
		\end{align*}
		
		In particular, this tells us that for fixed $i \in I$, we have
		\begin{align*}
			& \left| \alpha_{C_{\kappa(i, \ell)}} \left( \operatorname{Avg}_{F_{\kappa(i, \ell)}} f \right) - \int f \mathrm{d} \nu_{i(k)} \right| \\
			=	& \left| \alpha_{C_{\kappa(i, \ell)}} \left( \operatorname{Avg}_{F_{\kappa(i, \ell)}} f \right) - \int f \mathrm{d} \nu_{i(\kappa(i, \ell))} \right| \\
			\leq	& \frac{1}{\kappa(i, \ell)} + \left[ \sum_{h = 1}^{n_{i}} t_{\kappa(i, \ell)}^{(h)} \left| \operatorname{Avg}_{F_{\kappa(i, \ell)}} f \left( x^{(h)} \right) - \int f \mathrm{d} \theta^{(h)} \right| \right] + \frac{\|f\|_{C(X)}}{\kappa(i, \ell)} \\
			\leq	& \frac{1}{\ell} + \left[ \max_{1 \leq h \leq n_i} \left| \operatorname{Avg}_{F_{\kappa(i, \ell)}} f \left( x^{(h)} \right) - \int f \mathrm{d} \theta^{(h)} \right| \right] + \frac{\|f\|_{C(X)}}{\ell} \\
			\stackrel{\ell \to \infty}{\to}	& 0 .
		\end{align*}
		Therefore $\nu_i = \lim_{\ell \to \infty} \alpha_{ C_{\kappa(i, \ell)}} \operatorname{Avg}_{F_{\kappa(i, \ell)}}$. Thus $\operatorname{LS} \left( \left( \alpha_{C_k} \circ \operatorname{Avg}_{F_k} \right)_{k = 1}^\infty \right) \supseteq \mathcal{F}$ is dense in $\mathcal{M}_T(X)$, and since $\operatorname{LS} \left( \left( \alpha_{C_k} \circ \operatorname{Avg}_{F_k} \right)_{k = 1}^\infty \right)$ is a closed subset of $\mathcal{M}_T(X)$, it follows that
		$$\operatorname{LS} \left( \left( \alpha_{C_k} \circ \operatorname{Avg}_{F_k} \right)_{k = 1}^\infty \right) = \mathcal{M}_T(X) .$$
	\end{proof}
	
	We conclude this section by proving a result that does not rely on the measure $\mu$ neglecting shells.
	
	\begin{Prop}
		There exists a sequence $(x_k)_{k = 1}^\infty$ of points in $X$ and a sequence $(r_k)_{k = 1}^\infty$ of radii such that
		$$
		\operatorname{LS} \left( \left( \alpha_{B(x_k; r_k)} \circ \operatorname{Avg}_{F_k} \right)_{k = 1}^\infty \right) \supseteq \partial_e \mathcal{M}_T(X) .
		$$
	\end{Prop}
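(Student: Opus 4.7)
The plan is to combine Lemma \ref{Singly local pointwise reduction} with the Lindenstrauss Ergodic Theorem, applied along a partition of the index set $\mathbb{N}$. Since $\mathcal{M}(X)$ is separable and metrizable, the subspace $\partial_e \mathcal{M}_T(X)$ admits a countable dense subset $\mathcal{E}_0 = \{\theta^{(h)}\}_{h = 1}^\infty$. Because the limit set of any sequence is automatically closed in $\mathcal{M}(X)$, it will suffice to construct $(x_k, r_k)_{k = 1}^\infty$ for which each $\theta^{(h)}$ lies in $\operatorname{LS}\left( \left( \alpha_{B(x_k; r_k)} \circ \operatorname{Avg}_{F_k} \right)_{k = 1}^\infty \right)$; the closure of $\mathcal{E}_0$ then engulfs all of $\partial_e \mathcal{M}_T(X)$.

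First I would invoke Lemma \ref{Tempered subsequence} to pass to a tempered \Folner \space subsequence of $\mathbf{F}$, and then partition its index set into countably many pairwise disjoint infinite subsets $\{\mathcal{N}_h\}_{h = 1}^\infty$. Every further subsequence of a tempered \Folner \space sequence is again tempered \Folner \space (both conditions are inherited verbatim), so $(F_k)_{k \in \mathcal{N}_h}$ is a tempered \Folner \space sequence for each $h$. For each $h$, the Lindenstrauss Ergodic Theorem applied to $\theta^{(h)}$ along $(F_k)_{k \in \mathcal{N}_h}$, together with separability of $C(X)$ (handled by the familiar $\epsilon/3$ density argument used in the proof of Corollary \ref{Random TSD's, Lindenstrauss}), produces a point $x^{(h)} \in X$ such that $\lim_{\mathcal{N}_h \ni k \to \infty} \operatorname{Avg}_{F_k} f(x^{(h)}) = \int f \, \mathrm{d} \theta^{(h)}$ simultaneously for every $f \in C(X)$.

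Next I would set $x_k := x^{(h(k))}$, where $h(k)$ is the unique index with $k \in \mathcal{N}_{h(k)}$ (choosing $x_k$ arbitrarily on the complement $\mathbb{N} \setminus \bigcup_h \mathcal{N}_h$, if nonempty), and let $(r_k)_{k = 1}^\infty$ be any sequence of positive reals decaying $(X, \rho, H, L, \mathbf{F})$-fast. For each fixed $h$, Lemma \ref{Singly local pointwise reduction} applied with center $x^{(h)}$ yields $\alpha_{B(x^{(h)}; r_k)}(\operatorname{Avg}_{F_k} f) - \operatorname{Avg}_{F_k} f(x^{(h)}) \to 0$ as $k \to \infty$, and hence also along the subsequence $\mathcal{N}_h$. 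Combining with the typicality of $x^{(h)}$ gives weak*-convergence $\alpha_{B(x_k; r_k)} \circ \operatorname{Avg}_{F_k} \to \theta^{(h)}$ along $\mathcal{N}_h$, so every $\theta^{(h)}$ is a limit point, and taking closures completes the desired inclusion.

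The main subtlety, though not really an obstacle, is that Lemma \ref{Singly local pointwise reduction} is stated for a fixed spatial center, while in our construction $x_k$ varies with $k$. This is reconciled by the observation that $x_k$ is \emph{constant} on each block $\mathcal{N}_h$ (equal to $x^{(h)}$), so the lemma applies block-by-block; the behavior of the sequence outside $\bigcup_h \mathcal{N}_h$ is immaterial for controlling which $\theta^{(h)}$ appear as limit points.
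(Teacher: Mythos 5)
Your proposal is correct and mirrors the paper's argument: enumerate a countable dense subset of $\partial_e \mathcal{M}_T(X)$, split the indices into infinitely many infinite blocks, use the Lindenstrauss theorem to pick a typical point for each block's target measure, set $x_k$ block-by-block, and match the temporo-spatial average to the pointwise Birkhoff average via rapidly vanishing radii. The only cosmetic differences are (a) you pass to a single tempered \Folner{} subsequence up front and then partition it (relying on the easy observation that temperedness passes to subsequences), whereas the paper partitions all of $\mathbb{N}$ first and extracts a tempered subsequence inside each block via Lemma \ref{Tempered subsequence}, and (b) you invoke Lemma \ref{Singly local pointwise reduction} directly for $f \in C(X)$, whereas the paper re-derives a quantitative $1/k$ bound for $1$-Lipschitz $f$ via a choice of $\delta_k$ and appeals to Lemma \ref{1-Lipschitz functions dense span} for the dense span; both routes are sound and essentially equivalent.
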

	
	\begin{proof}
		Let $\left\{ \nu_i : i \in I \right\}$ be a countable dense subset of $\partial_e \mathcal{M}_T(X)$, where $I$ is some countable indexing set, and let $\mathscr{N} = \left\{ \mathcal{N}_i : i \in I \right\}$ be a partition of $\mathbb{N}$ into countably infinitely many infinite subsets. For each $k \in \mathbb{N}$, set $i(k) \in I$ such that $k \in \mathcal{N}_{i(k)}$.
		
		For each $i \in I$, let $\left( \kappa(i, \ell) \right)_{\ell = 1}^\infty$ be a strictly increasing sequence such that
		\begin{align*}
			\kappa(i, \ell)	& \in \mathcal{N}_i , \\
			\left( F_{\kappa(i, \ell)} \right)_{\ell = 1}^\infty	& \textrm{is tempered},
		\end{align*}
		which exists by Lemma \ref{Tempered subsequence}. By the Lindenstrauss Ergodic Theorem, for each $i \in I$ exists $y_i \in X$ such that
		\begin{align*}
			\lim_{\ell \to \infty} \operatorname{Avg}_{F_{\kappa(i, \ell)}} f(y_i)	& = \int f \mathrm{d} \nu_i	& (\forall f \in C(X)) .
		\end{align*}
		Set $x_k = y_{i(k)}$.
		
		For each $k \in \mathbb{N}$, choose $\delta_k > 0$ such that
		$$
		\max_{g \in F_k} \left( L(g) \cdot \delta_k^{H(g)} \right) < 1 / k ,
		$$
		and let $r_k \in (0, \delta_k)$ for all $k \in \mathbb{N}$. If $f \in \mathcal{L}(X)$, then
		\begin{align*}
			& \left| \alpha_{B\left( x_{\kappa(i, \ell)}; r_{\kappa(i, \ell)} \right)} \left( \operatorname{Avg}_{\kappa(i, \ell)} f \right) - \int f \mathrm{d} \nu_i \right| \\
			\leq	&  \left| \alpha_{B\left( x_{\kappa(i, \ell)}; r_{\kappa(i, \ell)} \right)} \left( \operatorname{Avg}_{\kappa(i, \ell)} f \right) - \operatorname{Avg}_{\kappa(i, \ell)} f\left( x_{\kappa(i, \ell)} \right) \right| + \left| \operatorname{Avg}_{\kappa(i, \ell)} f\left( x_{\kappa(i, \ell)} \right) - \int f \mathrm{d} \nu_i \right| \\
			=	& \left| \alpha_{B\left( x_{\kappa(i, \ell)}; r_{\kappa(i, \ell)} \right)} \left( \operatorname{Avg}_{\kappa(i, \ell)} f \right) - \operatorname{Avg}_{\kappa(i, \ell)} f\left( x_{\kappa(i, \ell)} \right) \right| + \left| \operatorname{Avg}_{\kappa(i, \ell)} f\left( y_i \right) - \int f \mathrm{d} \nu_i \right| \\
			\leq	& \frac{1}{\kappa(i, \ell)} + \left| \operatorname{Avg}_{\kappa(i, \ell)} f\left( y_i \right) - \int f \mathrm{d} \nu_i \right| \\
			\stackrel{\ell \to \infty}{\to}	& 0 .
		\end{align*}
		
		Therefore $\nu_i \in \operatorname{LS} \left( \left( \alpha_{B(x_k; r_k)} \circ \operatorname{Avg}_{F_k} \right)_{k = 1}^\infty \right)$ for all $i \in I$. Since $\left\{ \nu_i : i \in I \right\}$ is dense in $\partial_e \mathcal{M}_T(X)$, and $\operatorname{LS} \left( \left( \alpha_{B(x_k; r_k)} \circ \operatorname{Avg}_{F_k} \right)_{k = 1}^\infty \right)$ is always closed, it follows that
		$$\partial_e \mathcal{M}_T(X) \subseteq \operatorname{LS} \left( \left( \alpha_{B(x_k; r_k)} \circ \operatorname{Avg}_{F_k} \right)_{k = 1}^\infty \right) .$$
	\end{proof}
	
	\section{Weak specification and maximal oscillation}\label{Specification}
	
	Specification properties were initially introduced by R. Bowen in \cite{BowenSpec} in the course of studying Axiom A diffeomorphisms. In the intervening decades, a considerable amount of effort has been put into the study of other specification-like properties -typically weaker than the Specification Property considered by Bowen- and the connections between them. For a broad overview of these specification-like properties and the relations between them, we refer the reader to \cite{Panorama}, whose terminology we will be following.
	
	Throughout this section, let $(X, \rho)$ be a compact metric space, and let $T : \mathbb{N}_0 \curvearrowright X$ be an action of $\mathbb{N}_0$ on $X$ by continuous (not necessarily invertible) maps. For $x \in X , k \in \mathbb{N}$, we define the \emph{$k$th empirical measure of $x$} to be the Borel probability measure
	$$\mu_{x, k} : = \sum_{j = 0}^{k - 1} \delta_{T_j x} ,$$
	where $\delta_y$ denotes the point mass at $y$, i.e. $\delta_y(A) = \chi_A(y)$. In light of Lemma \ref{Singly local pointwise reduction}, the study of local temporo-spatial differentiations is closely tied to the study of pointwise ergodic averages.
	
	A point $x \in X$ is said to have \emph{maximal oscillation} with respect to $T : \mathbb{N}_0 \curvearrowright X$ if
	$$
	\operatorname{LS} \left( \left( \mu_{x, k} \right)_{k = 1}^\infty \right) = \mathcal{M}_T(X) .
	$$
	This could be understood as the worst possible divergence for the sequence $\left( \mu_{x, k} \right)_{k = 1}^\infty$. M. Denker, C. Grillenberger, and	K. Sigmund demonstrated the following prevalence result for points of maximal oscillation. Recall that a subset $S$ of $X$ is called \emph{residual} if $S$ contains a dense $G_\delta$ set.
	
	\begin{Thm}\label{DGS}\cite[Proposition 21.18]{DGSonCompactSpaces}
		If $T$ has the Periodic Specification Property, then the set of points $x \in X$ with maximal oscillation is residual in $X$.
	\end{Thm}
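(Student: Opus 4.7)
The plan is a Baire category argument in $X$. I would first fix a metric $\operatorname{dist}$ on $\mathcal{M}(X)$ compatible with the weak$^*$-topology, and exploit two standard consequences of the Periodic Specification Property: that the set of periodic orbit measures is weak$^*$-dense in $\mathcal{M}_T(X)$, and that long specification orbits deposit their empirical measures near the periodic measures being shadowed. Using this density, fix a countable weak$^*$-dense subset $\{\sigma_m\}_{m \in \mathbb{N}} \subseteq \mathcal{M}_T(X)$ with each $\sigma_m = \mu_{p_m, N_m}$ for some periodic point $p_m$ of period $N_m$.

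For each $m, j \in \mathbb{N}$, define
$$U_{m, j} = \bigcup_{k \geq j} \left\{ x \in X : \operatorname{dist}\left( \mu_{x, k}, \sigma_m \right) < 1/j \right\} .$$
Each $U_{m, j}$ is open because for fixed $k$ the map $x \mapsto \mu_{x, k}$ is continuous into $\mathcal{M}(X)$, being a finite average of point masses at continuous iterates of $x$. I would then show each $U_{m, j}$ is dense: given $y \in X$ and $\delta > 0$, I would apply the Periodic Specification Property to the concatenated orbit specification consisting of $y$ (as a length-one segment) followed, after the specification gap for tolerance $\delta$, by a long forward iterate of the orbit of $p_m$. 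The property yields a periodic point $x$ which $\delta$-shadows $y$ at time $0$ (so $\rho(x, y) < \delta$) and $\delta$-shadows the periodic orbit of $p_m$ for $L$ consecutive revolutions thereafter. By uniform continuity applied to a countable determining family in $C(X)$, the empirical measure $\mu_{x, k}$ for $k$ of the form $L N_m + O(1)$ can be made within $1/j$ of $\sigma_m$ by choosing $L$ sufficiently large; this places $x$ in $U_{m, j}$.

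By the Baire Category Theorem, the intersection $R := \bigcap_{m, j \in \mathbb{N}} U_{m, j}$ is a dense $G_\delta$, hence residual. For $x \in R$, every $\sigma_m$ belongs to $\operatorname{LS}\left( \left( \mu_{x, k} \right)_{k = 1}^\infty \right)$; density of $\{\sigma_m\}$ and closedness of $\operatorname{LS}$ then yield $\mathcal{M}_T(X) \subseteq \operatorname{LS}\left( \left( \mu_{x, k} \right)_{k = 1}^\infty \right)$. The reverse containment follows from Lemma \ref{Invariance} applied with the \Folner{} sequence $F_k = \{0, 1, \ldots, k - 1\}$ and $\beta_k = \delta_x$, since weak$^*$-limits of empirical averages are automatically $T$-invariant. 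Thus $R$ is a residual set of points of maximal oscillation.

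The principal obstacle is the density step: one must verify carefully that the Periodic Specification Property delivers the required concatenated shadowing, and that the empirical measure of the resulting periodic point can be quantitatively controlled on the appropriate time scale. This is where the full force of the specification hypothesis enters; the openness step, the Baire step, and the final inclusions are essentially formal once density is in hand.
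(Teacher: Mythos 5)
Your proposal is correct, and it is essentially the classical Denker--Grillenberger--Sigmund argument; the paper does not re-prove this theorem but cites it directly, so there is no internal proof to compare against. The structure of your sketch (Baire category on the sets $U_{m,j}$, openness via continuity of $x \mapsto \mu_{x,k}$, density via a specification-tracing argument, and the two $\operatorname{LS}$ inclusions) is sound. Two small remarks. First, your appeal to Lemma \ref{Invariance} for the reverse containment $\operatorname{LS}\left( (\mu_{x,k})_{k=1}^\infty \right) \subseteq \mathcal{M}_T(X)$ is not quite on target as stated, since that lemma is formulated for discrete \emph{group} actions with \Folner{} sequences, while Theorem \ref{DGS} concerns an $\mathbb{N}_0$-action; the desired containment is nonetheless standard for Ces\`aro averages over $\{0,\ldots,k-1\}$ and follows from the elementary telescoping bound $\left| \int f \, \mathrm{d} \mu_{x,k} - \int T_1 f \, \mathrm{d} \mu_{x,k} \right| \leq 2\|f\|_{C(X)}/k$. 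Second, the existence of the countable weak$^*$-dense family $\{\sigma_m\}$ of periodic orbit measures is itself a nontrivial consequence of the Periodic Specification Property, and this should be flagged as an invoked fact (it is proved earlier in the same chapter of the DGS reference).

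It is worth contrasting your route with the paper's proof of the strictly more general Theorem \ref{Weak Spec Theorem with sampling}. You anchor the Baire argument on periodic orbit measures, which uses the Periodic Specification Property twice: once to know that periodic measures are dense in $\mathcal{M}_T(X)$, and once to obtain a periodic tracing point. The paper instead fixes a countable dense set $\mathcal{E}$ of ergodic measures, takes the set $\mathcal{F}$ of rational convex combinations (dense in $\mathcal{M}_T(X)$ by Krein--Milman), and uses the Birkhoff Ergodic Theorem to supply a generic point for each ergodic constituent; the density step then concatenates \emph{several} orbit segments, one per ergodic constituent with length proportional to its rational weight, via specification. That approach needs only the Very Weak Specification Property (no periodicity, no $o(n)$ constraint on the gap function $\mathbf{M}_\delta$) and simultaneously handles every member of a countable sampling family $\Pi$. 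Your approach is shorter when the periodic specification hypothesis is available, at the cost of needing one segment type per target measure; the paper's buys generality in both the specification hypothesis and the sampling and dispenses with periodic points entirely.
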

	
	\begin{Rmk}
		In \cite{DGSonCompactSpaces}, what the authors call the Specification Property (defined there as Definition 21.1) is what \cite{Panorama} calls the Periodic Specification Property, which is slightly stronger than what \cite{Panorama} -and consequently we- call the Specification Property in Definition \ref{Spec Properties}.
	\end{Rmk}
	
	We introduce here a variation on and strengthening of the definition of maximal oscillation.
	
	\begin{Def}
		A \emph{sampling family} is a family $\Pi$ of functions $\mathbb{N} \to \mathbb{N}$ such that $\lim_{k \to \infty} \pi(k) = + \infty$ for all $k \in \mathbb{N}$. Given a sampling family $\Pi$, we say that a point $x \in X$ has \emph{maximal oscillation relative to $\Pi$} if for every $\pi \in \Pi$, we have that
		$$\operatorname{LS} \left( \left( \mu_{x, \pi(k)} \right)_{k = 1}^\infty \right) = \mathcal{M}_T (X) .$$
	\end{Def}
	
	Maximal oscillation can then be recovered as the case where $\Pi = \left\{ k \mapsto k \right\}$ consists solely of the identity function on $\mathbb{N}$.
	
	Maximal oscillation describes the situation where not only does the sequence $\left( \mu_{x, k} \right)_{k = 1}^\infty$ diverge, but it diverges to the greatest extent possible. However, because $\left( \mu_{x, k} \right)_{k = 1}^\infty$ takes values in the compact space $\mathcal{M}(X)$, we know it will always have convergent subsequences, meaning this divergence will always ``disappear" if we restrict our attention to an appropriate subsequence. Our notion of maximal oscillation relative to a sampling family allows us to strengthen the notion of maximal oscillation by prescribing the ``worst-case scenario" divergence along a family of subsequences.
	
	We now define a hierarchy of specification-like properties.
	
	\begin{Def}\label{Defining Spec}
		A \emph{specification} is a finite sequence $\xi = \left\{ \left( [a_j, b_j] , x_j \right) \right\}_{j = 1}^n$ of finite subintervals $[a_j, b_j]$ of $\mathbb{N}$ and points $x_j \in X$. Given a function $\mathbf{M} : \mathbb{N} \to \mathbb{N}$, we say that the specification $\xi = \left\{ \left( [a_j, b_j] , x_j \right) \right\}_{j = 1}^n$ is \emph{$\mathbf{M}$-spaced} if $a_j - b_{j - 1} \geq \mathbf{M}(j)$ for all $j = 2, \ldots, n$. If $\mathbf{M}$ is the constant function $N \in \mathbb{N}$, then we say an $\mathbf{M}$-spaced specification is \emph{$N$-spaced}.
	\end{Def}
	
	\begin{Def}\label{Spec Properties}
		Let $\xi = \left\{ \left( [a_j, b_j] , x_j \right) \right\}_{j = 1}^n$ be a specification, and let $\delta > 0$. We call a point $y \in X$ a \emph{$\delta$-tracing} of $\xi$ if
		\begin{align*}
			\rho \left( T_i x_j , T_{a_j + i} y \right)	& < \delta	& \left( \forall j = 1, \ldots, n ; i = 0 , 1, \ldots, b_j - a_j \right) .
		\end{align*}
		\begin{enumerate}[label=(\Roman*)]
			\item We call a family of functions $\left( \mathbf{M}_\delta : \mathbb{N} \to \mathbb{N} \right)_{\delta \in (0, 1)}$ a \emph{modulus of specification} for $(X, T)$ if every $\mathbf{M}_\delta$-spaced specification $\xi$ admits a $\delta$-tracing, and say that $T : \mathbb{N}_0 \curvearrowright X$ has the \emph{Very Weak Specification Property}.
			\item If $T$ admits a modulus of specification $\left( \mathbf{M}_\delta \right)_{\delta \in (0, 1)}$ with the additional property that
			\begin{align*}
				\lim_{n \to \infty} \frac{\mathbf{M}_\delta(n)}{n}	& = 0	& (\forall \delta \in (0, 1)),
			\end{align*}
			then we say that $T$ has the \emph{Weak Specification Property}.
			\item If $T$ admits a modulus of specification $\left( \mathbf{M}_\delta \right)_{\delta \in (0, 1)}$ with the additional property that each $\mathbf{M}_\delta$ is a constant function, then we say that $T$ has the \emph{Specification Property}.
		\end{enumerate}
	\end{Def}
	
	Intuitively, these specification-like properties mean that if we have some orbit segments that we want to approximate within $\delta$, then we can find a point whose orbits are close to those segments as long as the segments are spaced far enough apart from each other. Clearly these specification properties are listed in ascending order of strength.
	
	What we call the Weak Specification Property and Specification Property both have precedents in the literature. The Specification Property goes back to R. Bowen's original work \cite{BowenSpec}, and what we call here the Weak Specification Property can be found in \cite{MarcusWeakSpec}. See \cite{Panorama} for a fuller historical discussion. However, to our knowledge, there is no precedent for what we term here the Very Weak Specification Property in the literature. Regardless, our results in this section do not rely on a modulus of specification $\left( \mathbf{M}_\delta \right)_{\delta \in (0, 1)}$ satisfying the condition that $\mathbf{M}_\delta(n) = o(n)$ for all $\delta \in (0, 1)$, so we see fit to introduce this weaker specification-like property.
	
	Our main theorem of this section is the following.
	
	\begin{Thm}\label{Weak Spec Theorem with sampling}
		Let $\Pi$ be a countable sampling family. Suppose $T : \mathbb{N}_0 \curvearrowright X$ has the Very Weak Specification Property. Then the set
		$$X^\Pi = \left\{ x \in X : \operatorname{LS} \left( \left( \mu_{x, \pi(k)} \right)_{k = 1}^\infty \right) = \mathcal{M}_T(X) \textrm{ for all $\pi \in \Pi$} \right\} .$$
		is residual.
	\end{Thm}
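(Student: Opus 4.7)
The plan is to apply the Baire Category Theorem: since $\Pi$ is countable, it suffices to show that for each fixed $\pi \in \Pi$ the set $X^\pi := \{x \in X : \operatorname{LS}((\mu_{x, \pi(k)})_{k = 1}^\infty) = \mathcal{M}_T(X)\}$ is residual. The inclusion $\operatorname{LS}((\mu_{x, \pi(k)})_k) \subseteq \mathcal{M}_T(X)$ is automatic: by Lemma \ref{Invariance} applied to $\beta_k = \delta_x$ with the \Folner\space sequence $(\{0, 1, \ldots, \pi(k) - 1\})_{k = 1}^\infty$ (\Folner\space because $\pi(k) \to \infty$), every subsequential limit of $(\mu_{x, \pi(k)})_k$ is $T$-invariant. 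Fixing a countable weak*-dense subset $\{\nu_j\}_{j \in \mathbb{N}}$ of $\mathcal{M}_T(X)$ and letting $\operatorname{dist}$ be the metric on $\mathcal{M}(X)$ built from a countable dense family of $1$-Lipschitz functions (as in the proof of Theorem \ref{Targeting finite-dimensional subsets}), one writes
\[
X^\pi = \bigcap_{j, m, K \in \mathbb{N}} U_{j, m, K}^\pi, \qquad U_{j, m, K}^\pi := \bigcup_{k \geq K} \{x \in X : \operatorname{dist}(\mu_{x, \pi(k)}, \nu_j) < 1/m\},
\]
and each $U_{j, m, K}^\pi$ is open by continuity of $x \mapsto \mu_{x, \pi(k)}$. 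The task reduces to proving density of each $U_{j, m, K}^\pi$.

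To prove density, fix $x_0 \in X$, $\eta > 0$, and $j, m, K \in \mathbb{N}$; I construct $x \in B(x_0, \eta)$ and $k \geq K$ with $\operatorname{dist}(\mu_{x, \pi(k)}, \nu_j) < 1/m$. First, approximate $\nu_j$ by a rational convex combination $\nu' = \sum_{i = 1}^r \lambda_i \theta_i$ of ergodic measures $\theta_i \in \partial_e \mathcal{M}_T(X)$ with $\operatorname{dist}(\nu_j, \nu') < 1/(6m)$, using Krein-Milman together with Krylov-Bogolyubov to guarantee $\partial_e \mathcal{M}_T(X) \neq \emptyset$. For each ergodic $\theta_i$, the classical Birkhoff Ergodic Theorem furnishes $z_i \in X$ with $\mu_{z_i, N} \to \theta_i$, and one chooses $N_1$ so that $\operatorname{dist}(\mu_{z_i, N}, \theta_i) < 1/(6m)$ for all $N \geq N_1$ and all $i$. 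Choose $\delta \in (0, \eta)$ with $\delta < 1/(6m)$; the structural feature of this metric is that any $\delta$-tracing of an orbit segment of length $N$ (in the sense of Definition \ref{Spec Properties}) forces the corresponding empirical measures to be within $\delta$ in $\operatorname{dist}$, uniformly in $N$, since each $1$-Lipschitz test function $f$ satisfies $|\int f \, d\mu_{u, N} - \int f \, d\mu_{v, N}| \leq \delta$. Let $(\mathbf{M}_{\delta'})_{\delta' \in (0, 1)}$ be a modulus of specification supplied by the Very Weak Specification Property, and set $\Gamma := \sum_{i = 2}^{r + 1} \mathbf{M}_\delta(i)$.

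Since $\pi(k) \to \infty$, pick $k \geq K$ large enough that $M := \pi(k)$ satisfies $(\Gamma + r + 1)/M < 1/(6m)$ and $\lfloor \lambda_i (M - \Gamma - 1) \rfloor \geq N_1$ for each $i$. Setting $N := M - \Gamma - 1$, $n_i := \lfloor \lambda_i N \rfloor$ for $i < r$, and $n_r := N - \sum_{i < r} n_i$, I assemble the $\mathbf{M}_\delta$-spaced specification
\[
\xi = \{([0, 0], x_0)\} \cup \{([\alpha_i, \alpha_i + n_i - 1], z_i) : i = 1, \ldots, r\},
\]
with $\alpha_1 = \mathbf{M}_\delta(2)$ and $\alpha_{i + 1} = \alpha_i + n_i + \mathbf{M}_\delta(i + 2) - 1$, so that $\xi$ fits inside $[0, M - 1]$. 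The Very Weak Specification Property then produces $x \in X$ that $\delta$-traces $\xi$; in particular $\rho(x, x_0) < \delta < \eta$. Decomposing $\mu_{x, M}$ into contributions from the $r$ shadowing blocks (each within $\delta$ of $(n_i/M) \mu_{z_i, n_i}$ in $\operatorname{dist}$) and at most $\Gamma + r + 1$ uncontrolled positions, and combining with $|n_i/M - \lambda_i| \leq (\Gamma + 2)/M$, $\operatorname{dist}(\mu_{z_i, n_i}, \theta_i) < 1/(6m)$, and $\operatorname{dist}(\nu', \nu_j) < 1/(6m)$, a triangle-inequality calculation yields $\operatorname{dist}(\mu_{x, M}, \nu_j) < 1/m$. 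The main obstacle is the order in which constants are fixed: $\delta$ (hence $\mathbf{M}_\delta$, hence $\Gamma$) must be chosen before any threshold on $\pi(k)$, and the total error splits into four sources (rational approximation of $\nu_j$, Birkhoff convergence speed for each $z_i$, $\delta$-tracing in the metric on measures, and the gap-to-length ratio $\Gamma/M$) that each must be controlled below a constant multiple of $1/m$; the conceptual content---that Very Weak Specification suffices to shadow a concatenation of ergodic-generic orbit segments realizing any prescribed rational convex combination---is the natural adaptation of the reasoning behind Theorem \ref{DGS}.
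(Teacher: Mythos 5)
Your proposal is correct and follows essentially the same path as the paper's own argument. The paper proves an auxiliary density lemma (Lemma \ref{Weak Spec Lemma with sampling}) by the identical mechanism: approximate a target measure by a rational convex combination of ergodic measures from a countable dense set, pick Birkhoff-generic points for each ergodic piece, assemble an $\mathbf{M}_\delta$-spaced specification whose first block is the singleton $[0,0]$ anchored at the point to be approximated, let $\pi(k)$ grow so that the shadowing blocks dominate the gaps, and control the error in the weak*-metric by the three-way triangle split (metric on empirical measures from the trace, Birkhoff convergence rate, gap-to-length ratio); it then wraps this in the Baire-category bookkeeping you describe. The only cosmetic differences are that the paper takes block lengths $Kp_i$ with rational $\lambda_i = p_i/q$ and a constant gap $N = \max_j \mathbf{M}_\delta(j)$ rather than your $\lfloor \lambda_i N \rfloor$ with variable gaps $\mathbf{M}_\delta(j)$, and it defines $\operatorname{dist}$ from an arbitrary dense family of continuous functions and uses uniform continuity rather than your choice of a $1$-Lipschitz family (which buys you the clean bound $\operatorname{dist}(\mu_{u,N}', \mu_{v,N}) \leq \delta$ at the cost of having to invoke Lemma \ref{1-Lipschitz functions dense span} to know such a family generates the weak*-topology); neither changes the substance.
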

	
	Let $\mathcal{E}$ denote a countable dense subset of $\partial_e \mathcal{M}_T(X)$, and let
	$$\mathcal{F} = \left\{ \sum_{i = 1}^n \lambda_i \theta_i : n \in \mathbb{N}, \theta_i \in \mathcal{E} , \lambda_i \in \mathbb{Q} \cap [0, 1], \sum_{i = 1}^n \lambda_i = 1 \right\} ,$$
	i.e. $\mathcal{F}$ is the set of all rational convex combinations of elements of $\mathcal{E}$. Then $\mathcal{F}$ is a countable dense subset of $\mathcal{M}_T(X)$ by the Krein-Millman Theorem. Further, let $\{f_h\}_{h = 1}^\infty$ be an enumerated dense subset of $C(X)$.
	
	\begin{Lem}\label{Weak Spec Lemma with sampling}
		Let $\Pi$ be a sampling family. For $\nu \in \mathcal{F}, \epsilon > 0 , H \in \mathbb{N}, k_0 \in \mathbb{N}, \pi \in \Pi$, set
		$$E(\nu, \epsilon, H, k_0, \pi) = \bigcap_{h = 1}^H \left\{ x \in X : \exists k \geq k_0 \; \left( \left| \left( \frac{1}{\pi(k)} \sum_{j = 0}^{\pi(k) - 1} T_j f_h(x) \right) - \int f_h \mathrm{d} \nu \right| < \epsilon \right) \right\} .$$
		If $T$ has the Very Weak Specification Property, then $E(\nu, \epsilon, H, k_0, \pi)$ is a dense open subset of $X$.
	\end{Lem}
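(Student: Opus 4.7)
\emph{Proof plan.} Openness of $E(\nu, \epsilon, H, k_0, \pi)$ is immediate: for each fixed $k \geq k_0$ and $h \in \{1, \ldots, H\}$, the map $x \mapsto \frac{1}{\pi(k)} \sum_{j=0}^{\pi(k) - 1} f_h(T_j x)$ is continuous, so the set on which it lies within $\epsilon$ of $\int f_h \, \mathrm{d} \nu$ is open; unions over $k \geq k_0$ and finite intersections over $h$ preserve openness.

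For density, I fix a nonempty open $U \subseteq X$ and decompose $\nu = \sum_{i=1}^n \lambda_i \theta_i$ as a rational convex combination of ergodic measures $\theta_1, \ldots, \theta_n \in \mathcal{E}$. Pick $x_0 \in U$ and $\delta_0 > 0$ with $B(x_0, \delta_0) \subseteq U$, then shrink $\delta \in (0, \delta_0)$ so that the uniform continuity moduli of $f_1, \ldots, f_H$ force $|f_h(z) - f_h(w)| < \epsilon/4$ whenever $\rho(z, w) < \delta$. Let $\mathbf{M}_\delta$ be the corresponding modulus of specification supplied by the Very Weak Specification Property, and set $G := \sum_{j=2}^{n+1} \mathbf{M}_\delta(j)$. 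Since $\mathbb{N}_0$ acts along the tempered \Folner\ sequence $\bigl( \{0, 1, \ldots, N-1\} \bigr)_{N \in \mathbb{N}}$, the Lindenstrauss Ergodic Theorem supplies, for each $i$, a point $y_i \in X$ such that $\frac{1}{N} \sum_{s=0}^{N-1} f_h(T_s y_i) \to \int f_h \, \mathrm{d} \theta_i$ as $N \to \infty$ simultaneously for every $h \leq H$ (intersect the countably many $\theta_i$-conull typical sets); fix $K^\ast$ so that for every $N \geq K^\ast$ and every $i, h$, this Birkhoff sum is within $\epsilon/4$ of $\int f_h \, \mathrm{d} \theta_i$.

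Next, invoking $\lim_k \pi(k) = \infty$, I choose $k \geq k_0$ with $N := \pi(k)$ so large that, setting $L_i := \lfloor \lambda_i (N - 1 - G) \rfloor$, we have $L_i \geq K^\ast$ for every $i$, we have $(1 + G + n) \|f_h\|_{C(X)} / N < \epsilon/4$ for every $h \leq H$, and the rounding discrepancy satisfies $\sum_i |L_i/N - \lambda_i| \cdot \|f_h\|_{C(X)} < \epsilon/4$ for every $h$. Build the specification
$$\xi = \bigl\{ ([0, 0], x_0), \; ([A_1, A_1 + L_1 - 1], y_1), \; \ldots, \; ([A_n, A_n + L_n - 1], y_n) \bigr\},$$
where the $A_i$ are chosen to meet $\mathbf{M}_\delta$-spacing, with the residue $N - 1 - \sum_i L_i - G \leq n$ absorbed into the final gap (which only enlarges a gap, so $\mathbf{M}_\delta$-spacing still holds). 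The Very Weak Specification Property then yields a $\delta$-tracing $y \in X$; the singleton block at time $0$ forces $\rho(y, x_0) < \delta < \delta_0$, so $y \in U$. Splitting $\frac{1}{N} \sum_{j=0}^{N-1} f_h(T_j y)$ into the contributions from the $y_i$-blocks and from the ``dead'' times (the singleton $\{0\}$, the prescribed gaps, and the residue), one bounds the dead-time contribution by $(1+G+n)\|f_h\|_{C(X)}/N < \epsilon/4$; on each $y_i$-block, $\delta$-tracing combined with uniform continuity replaces $T_{A_i + s} y$ by $T_s y_i$ at pointwise cost $\epsilon/4$, and the ergodic approximation for $y_i$ contributes another $\epsilon/4$; the rounding discrepancy contributes the final $\epsilon/4$. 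Summing, $\bigl| \frac{1}{N} \sum_{j=0}^{N-1} f_h(T_j y) - \int f_h \, \mathrm{d} \nu \bigr| < \epsilon$ for every $h \leq H$, witnessing $y \in U \cap E(\nu, \epsilon, H, k_0, \pi)$.

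The main obstacle is the combinatorial bookkeeping: I must fit the blocks and gaps of $\xi$ exactly within $[0, \pi(k) - 1]$, even though $N = \pi(k)$ is not free to choose. This is handled by using $\lim_k \pi(k) = \infty$ to take $N$ large, absorbing the $O(1)$ floor-rounding residue into the last gap (permissible because $\mathbf{M}_\delta$-spacing is a lower bound, not an equality), and compressing the fixed dead-time cost $(1 + G + n)$ to negligible relative weight. Notably, the argument never uses any sub-linear growth hypothesis on $\mathbf{M}_\delta$, so Very Weak Specification alone suffices, and no property of the sampling function $\pi$ beyond $\pi(k) \to \infty$ is invoked.
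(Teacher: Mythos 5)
Your proof is correct and takes essentially the same route as the paper: decompose $\nu$ as a rational convex combination of ergodic measures, take Birkhoff-generic points $y_i$, build a specification with a singleton block at the target point followed by long blocks over the $y_i$, invoke Very Weak Specification to get a $\delta$-tracing, and estimate the $\pi(k)$-term Birkhoff sum of the tracing. The differences are cosmetic bookkeeping: you use block lengths $\lfloor\lambda_i(N-1-G)\rfloor$ with the exact gaps $\mathbf{M}_\delta(j)$ and a four-way $\epsilon/4$ split (dead times, tracing error, ergodic approximation, rounding), while the paper uses block lengths $Kp_i$ with a uniform gap $\max_j\mathbf{M}_\delta(j)$ and a three-way $\epsilon/3$ split that absorbs the rounding into its first claim.
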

	
	\begin{proof}
		Fix $H \in \mathbb{N} , \nu \in \mathcal{M}_T(X), \epsilon > 0, \pi \in \Pi$. Set
		$$A_k = \left\{ x \in X : \left| \left( \frac{1}{\pi(k)} \sum_{j = 0}^{\pi(k) - 1} T_j f_h(x) \right) - \int f_h \mathrm{d} \nu \right| < \epsilon \textrm{ for $h = 1, \ldots, H$} \right\} .$$
		Then $E(\nu, \epsilon, H, k_0) = \bigcup_{k = k_0}^\infty A_k$. Clearly $\bigcup_{k = k_0}^\infty A_k$ is open, leaving us to show it is dense.
		
		Choose $\theta_0, \theta_1, \ldots, \theta_{I - 1} \in \mathcal{E}; \lambda_0, \lambda_1, \ldots, \lambda_{I - 1} \in [0, 1] \cap \mathbb{Q}$ such that
		$$\nu = \sum_{i = 0}^{I - 1} \lambda_i \theta_i ,$$
		where we can assume without loss of generality that $\lambda_i > 0$ for all $i = 1, \ldots, I$. Let \linebreak$p_0, p_1, \ldots, p_{I - 1}, q \in \mathbb{N}$ such that
		\begin{align*}
			\lambda_i	& = \frac{p_i}{q}	& (i = 0, 1, \ldots, I - 1) .
		\end{align*}
		Let $y_0, y_1, \ldots, y_{I - 1} \in X$ such that $\lim_{k \to \infty} \frac{1}{k} \sum_{j = 0}^{k - 1} T_j f_h(y_i) = \int f \mathrm{d} \theta_i$ for $i = 0, 1, \ldots, I - 1$, which exist by the Birkhoff Ergodic Theorem. Choose $k_0 \in \mathbb{N}$ such that
		\begin{align*}
			k \geq k_0	& \Rightarrow \left| \left( \frac{1}{k} \sum_{j = 0}^{k - 1} T_j f_h(y_i) \right) - \int f_h \mathrm{d} \theta_i \right| < \epsilon / 3	& (i = 0, 1, \ldots, I - 1; h = 1, \ldots, H) .
		\end{align*}
		
		Fix $x \in X, \eta > 0$. We will show that there exists $k \geq k_0$ and $y \in A_k$ such that
		$$\rho(x, y) \leq \eta .$$
		Since $f_1, \ldots, f_H$ are uniformly continuous, there exists $\delta > 0$ such that
		$$
		\forall z_1, z_2 \in X \; \forall h \in \{1, \ldots, H\} \;\left( \rho(z_1, z_2) < \delta \Rightarrow |f_h(z_1) - f_h(z_2)| < \epsilon / 3 \right) .
		$$
		Assume without loss of generality that $\delta < \eta$.
		
		Let $(\mathbf{M}_\delta)_{\delta \in (0, 1)}$ be a modulus of specification for $T : \mathbb{N}_0 \curvearrowright X$. Fix 
		$$N = \max \left\{ \mathbf{M}_\delta(1) , \ldots, \mathbf{M}_\delta(I + 1) \right\}.$$
		For $K \in \mathbb{N}$, define a sequence
		$$a_{-1}^{(K)} \leq b_{-1}^{(K)} < a_0^{(K)} \leq b_0^{(K)} < a_1^{(K)} \leq b_1^{(K)} < a_2^{(K)} \leq b_2^{(K)} < \cdots < a_{I - 1}^{(K)} \leq b_{I - 1}^{(K)}$$
		by
		\begin{align*}
			a_{- 1}^{(K)}	& = 0 ,	& b_{-1}^{(K)}	& = 0 , \\
			a_0^{(K)}	& = N ,	& b_0^{(K)}	& = a_0 + K p_0 - 1, \\
			a_1^{(K)}	& = b_0^{(K)} + N,	& b_1^{(K)}	& = a_1^{(K)} + K p_1 - 1 , \\
			a_2^{(K)}	& = b_1^{(K)} + N,	& b_2^{(K)}	& = a_2^{(K)} + K p_2 - 1 , \\
			\vdots \\
			a_{I - 1}^{(K)}	& = b_{I - 2}^{(K)} + N ,	& b_{I - 1}^{(K)}	& = a_{I - 1}^{(K)} + K p_{I - 1} - 1 .
		\end{align*}
		Written explicitly, we have
		\begin{align*}
			a_i^{(K)}	& = (i + 1) N + K \sum_{\ell = 0}^{i - 1} p_\ell , \\
			b_i^{(K)}	& = (i + 1) N - 1 + K \sum_{\ell = 0}^{i} p_\ell .
		\end{align*}
		Set
		$$x_i = \begin{cases}
			x	& \textrm{if $i = - 1$} , \\
			y_{i}	& \textrm{if $0 \leq i \leq I - 1$}
		\end{cases}
		$$
		Let $\xi^{(K)}$ be the specification
		$$\xi^{(K)} = \left\{ \left( \left[ a_i^{(K)}, b_i^{(K)} \right] , x_i \right) \right\}_{i = -1}^{I - 1} .$$
		Then $\xi^{(K)}$ is $\mathbf{M}_\delta$-spaced, so by the Weak Specification Property, for each $K \in \mathbb{N}$ exists $y = y^{(K)} \in X$ such that $y^{(K)}$ is a $\delta$-tracing of $\xi^{(K)}$. In particular, since $a_{-1}^{(K)} = 0 = b_{-1}^{(K)}, x_{-1} = x$, this means that $\rho(x, y) < \delta < \eta$. We claim that $y^{(K)} \in E(\nu, \epsilon, H, k_0, \pi)$ for sufficiently large $K$.
		
		For $k \in \mathbb{N}$, set
		$$K = K_k = \left \lfloor \frac{\pi(k) - IN - 1}{q} \right \rfloor ,$$
		so
		$$b_{I - 1}^{(K)} + 1 = IN + Kq + 1 \leq \pi(k) \leq IN + (K + 1)q.$$
		The following sketch of our argument motivates our definition of $y^{(K)}$. Let $f \in C(X)$. Then
		\begin{align*}
			& \frac{1}{\pi(k)} \sum_{j = 0}^{\pi(k) - 1} f_h \left( T_j y^{(K)} \right)	\\
			\approx	& \operatorname{Avg}_{\left[ a_0^{(K)}, b_0^{(K)}\right] \cup \left[ a_1^{(K)}, b_1^{(K)} \right] \cup \cdots \left[ a_{I - 1}^{(K)}, b_{I - 1}^{(K)} \right]} f_h \left( y^{(K)} \right) \\
			=	& \frac{1}{K p_0 + K p_1 + \cdots K p_{I - 1}} \sum_{i = 0}^{I - 1} \sum_{j = a_i^{(K)}}^{b_i^{(K)}} f_h \left( T_j y^{(K)} \right) \\
			=	& \frac{1}{K p_0 + K p_1 + \cdots K p_{I - 1}} \sum_{i = 0}^{I - 1} \left( b_i^{(K)} - a_i^{(K)} + 1 \right) \operatorname{Avg}_{\left[ a_i^{(K)} , b_i^{(K)} \right]} f_h \left( y^{(K)} \right) \\
			=	& \frac{1}{K q} \sum_{i = 0}^{I - 1} K p_i \operatorname{Avg}_{\left[ a_i^{(K)} , b_i^{(K)} \right]} f_h \left( y^{(K)} \right) \\
			=	& \sum_{i = 0}^{I - 1} \frac{p_i}{q} \operatorname{Avg}_{\left[ a_i^{(K)} , b_i^{(K)} \right]} f_h \left( y^{(K)} \right) \\
			\approx	& \sum_{i = 0}^{I - 1} \frac{p_i}{q} \operatorname{Avg}_{\left[ 0 , b_i^{(K)} - a_i^{(K)} \right]} f_h \left( x_i \right) \\
			\approx	& \sum_{i = 0}^{I - 1} \frac{p_i}{q} \int f_h \mathrm{d} \theta_i \\
			=	& \int f_h \mathrm{d} \nu ,
		\end{align*}
		where we write that $s(k) \approx t(k)$ if $|s(k) - t(k)| < \epsilon / 3$ for sufficiently large $k \in \mathbb{N}$. So it will suffice to verify those three claims.
		
			\textbf{Claim (i):} We first argue that
			$$\left| \left( \frac{1}{\pi(k)} \sum_{j = 0}^{\pi(k) - 1} f_h \left( T_j y^{(K)} \right) \right) - \operatorname{Avg}_{\left[ a_0^{(K)}, b_0^{(K)}\right] \cup \left[ a_1^{(K)}, b_1^{(K)} \right] \cup \cdots \left[ a_{I - 1}^{(K)}, b_{I - 1}^{(K)} \right]} f_h \left( y^{(K)} \right) \right| \leq \frac{\epsilon}{3}$$
			for sufficiently large $k \in \mathbb{N}$. We know that
			\begin{align*}
				& \frac{1}{\pi(k)} \sum_{j = 0}^{\pi(k) - 1} f_h \left( T_j y^{(K)} \right) \\
				=	& \frac{1}{\pi(k)} \left( \sum_{i = 0}^{I - 1} \sum_{j = a_i^{(K)}}^{b_i^{(K)}} f_h \left( T_j y^{(K)} \right) \right) + \frac{1}{\pi(k)}\left( \sum_{j = 0}^{a_0^{(K)}} f_h \left( T_j y^{(K)} \right) \right) \\
				& + \frac{1}{\pi(k)}\left( \sum_{i = 0}^{I - 2} \sum_{j = b_i^{(K)} + 1}^{a_{i + 1}^{(K)}} f_h \left( T_j y^{(K)} \right) \right) + \frac{1}{\pi(k)}\left( \sum_{j = b_{I - 1}^{(K)} + 1}^{\pi(k) - 1} f_h \left( T_j y^{(K)} \right) \right) \\
				=	& \frac{K q}{\pi(k)} \operatorname{Avg}_{\left[ a_0^{(K)}, b_0^{(K)}\right] \cup \left[ a_1^{(K)}, b_1^{(K)} \right] \cup \cdots \left[ a_{I - 1}^{(K)}, b_{I - 1}^{(K)} \right]} f_h \left( y^{(K)} \right) + \frac{1}{\pi(k)}\left( \sum_{j = 0}^{a_0^{(K)}} f_h \left( T_j y^{(K)} \right) \right) \\
				& + \frac{1}{\pi(k)}\left( \sum_{i = 0}^{I - 2} \sum_{j = b_i^{(K)} + 1}^{a_{i + 1}^{(K)}} f_h \left( T_j y^{(K)} \right) \right) + \frac{1}{\pi(k)}\left( \sum_{j = b_{I - 1}^{(K)} + 1}^{\pi(k) - 1} f_h \left( T_j y^{(K)} \right) \right) \\
				=	& \operatorname{Avg}_{\left[ a_0^{(K)}, b_0^{(K)}\right] \cup \left[ a_1^{(K)}, b_1^{(K)} \right] \cup \cdots \left[ a_{I - 1}^{(K)}, b_{I - 1}^{(K)} \right]} f_h \left( y^{(K)} \right) \\
				& + \frac{K q - \pi(k)}{\pi(k)} \operatorname{Avg}_{\left[ a_0^{(K)}, b_0^{(K)}\right] \cup \left[ a_1^{(K)}, b_1^{(K)} \right] \cup \cdots \left[ a_{I - 1}^{(K)}, b_{I - 1}^{(K)} \right]} f_h \left( y^{(K)} \right) \\
					&  + \frac{1}{\pi(k)}\left( \sum_{j = 0}^{a_0^{(K)}} f_h \left( T_j y^{(K)} \right) \right) + \frac{1}{\pi(k)}\left( \sum_{i = 0}^{I - 2} \sum_{j = b_i^{(K)} + 1}^{a_{i + 1}^{(K)}} f_h \left( T_j y^{(K)} \right) \right) \\
					& + \frac{1}{\pi(k)}\left( \sum_{j = b_{I - 1}^{(K)} + 1}^{\pi(k) - 1} f_h \left( T_j y^{(K)} \right) \right) .
			\end{align*}
			Therefore
			\begin{align*}
				& \left| \left( \frac{1}{\pi(k)} \sum_{j = 0}^{\pi(k) - 1} f_h \left( T_j y^{(K)} \right) \right) - \operatorname{Avg}_{\left[ a_0^{(K)}, b_0^{(K)}\right] \cup \left[ a_1^{(K)}, b_1^{(K)} \right] \cup \cdots \left[ a_{I - 1}^{(K)}, b_{I - 1}^{(K)} \right]} f_h \left( y^{(K)} \right) \right| \\
				\leq & \left| \frac{K q - \pi(k)}{\pi(k)} \operatorname{Avg}_{\left[ a_0^{(K)}, b_0^{(K)}\right] \cup \left[ a_1^{(K)}, b_1^{(K)} \right] \cup \cdots \left[ a_{I - 1}^{(K)}, b_{I - 1}^{(K)} \right]} f_h \left( y^{(K)} \right) \right| \\
				& + \left| \frac{1}{\pi(k)}\left( \sum_{j = 0}^{a_0^{(K)}} f_h \left( T_j y^{(K)} \right) \right) \right| \\
				& + \left| \frac{1}{\pi(k)}\left( \sum_{i = 0}^{I - 2} \sum_{j = b_i^{(K)} + 1}^{a_{i + 1}^{(K)}} f_h \left( T_j y^{(K)} \right) \right) \right| \\
					& + \left| \frac{1}{\pi(k)} \sum_{j = b_{I - 1}^{(K)} + 1}^{\pi(k) - 1} f_h \left( T_j y^{(K)} \right) \right| \\
				=	& \frac{\pi(k) - K q}{\pi(k)} \left| \operatorname{Avg}_{\left[ a_0^{(K)}, b_0^{(K)}\right] \cup \left[ a_1^{(K)}, b_1^{(K)} \right] \cup \cdots \left[ a_{I - 1}^{(K)}, b_{I - 1}^{(K)} \right]} f_h \left( y^{(K)} \right) \right| \\
				& + \left| \frac{1}{\pi(k)}\left( \sum_{j = 0}^{N} f_h \left( T_j y^{(K)} \right) \right) \right| \\
				& + \left| \frac{1}{\pi(k)}\left( \sum_{i = 0}^{I - 2} \sum_{j = b_i^{(K)} + 1}^{b_i^{(K)} + N} f_h \left( T_j y^{(K)} \right) \right) \right| + \left| \frac{1}{\pi(k)} \sum_{j = IN + K q + 1}^{\pi(k) - 1} f_h \left( T_j y^{(K)} \right) \right| \\
				\leq	& \frac{\pi(k) - K q}{\pi(k)} \left\| f_h \right\|_{C(X)} + \frac{N + 1}{\pi(k)} \left\|f_h\right\|_{C(X)} \\
				& + \frac{(I - 1) N}{\pi(k)} \left\| f_h \right\|_{C(X)} + \frac{\pi(k) - I N + K q + 1}{\pi(k)} \left\| f_h \right\|_{C(X)} \\
				\leq	& \left[ \frac{I N + 1}{\pi(k)} + \frac{N + 1}{\pi(k)} + \frac{(I - 1) N}{\pi(k)} + \frac{q}{\pi(k)} \right] \cdot \left\| f_h \right\|_{C(X)} \\
				\stackrel{k \to \infty}{\to}	& 0 .
			\end{align*}
			This establishes our estimate for large $k$.
			
			\textbf{Claim (ii):} We next argue that
			$$\left| \left( \sum_{i = 0}^{I - 1} \frac{p_i}{q} \operatorname{Avg}_{\left[ a_i^{(K)} , b_i^{(K)} \right]} f_h \left( y^{(K)} \right) \right) - \left( \sum_{i = 0}^{I - 1} \frac{p_i}{q} \operatorname{Avg}_{\left[ 0 , b_i^{(K)} - a_i^{(K)} \right]} f_h \left( x_i \right) \right) \right| < \frac{\epsilon}{3}$$
			for all $k \in \mathbb{N}$. To see this, we can note that
			\begin{align*}
				& \left| \left( \sum_{i = 0}^{I - 1} \frac{p_i}{q} \operatorname{Avg}_{\left[ a_i^{(K)} , b_i^{(K)} \right]} f_h \left( y^{(K)} \right) \right) - \left( \sum_{i = 0}^{I - 1} \frac{p_i}{q} \operatorname{Avg}_{\left[ 0 , b_i^{(K)} - a_i^{(K)} \right]} f_h \left( x_i \right) \right) \right| \\
				=	& \left| \sum_{i = 0}^{I - 1} \frac{p_i}{q} \; \frac{1}{b_i^{(K)} - a_i^{(K)} + 1} \sum_{j = 0}^{b_i^{(K)} - a_i^{(K)}} \left( f_h \left( T_{j + a_i^{(K)}} y^{(K)} \right) - f_h \left( T_j x_i \right) \right) \right| \\
				=	& \left| \sum_{i = 0}^{I - 1} \frac{p_i}{q} \; \frac{1}{K p_i} \sum_{j = 0}^{K p_i - 1} \left( f_h \left( T_{j + a_i^{(K)}} y^{(K)} \right) - f_h \left( T_j x_i \right) \right) \right| \\
				\leq	& \sum_{i = 0}^{I - 1} \frac{p_i}{q} \frac{1}{K p_i} \sum_{j = 0}^{K p_i - 1} \left| f_h \left( T_{j + a_i^{(K)}} y^{(K)} \right) - f_h \left( T_j x_i \right) \right| \\
				(\dagger) <	& \sum_{i = 0}^{I - 1} \frac{p_i}{q} \; \frac{1}{K p_i} \sum_{j = 0}^{K p_i - 1} \frac{\epsilon}{3} \\
				=	& \frac{\epsilon}{3} ,
			\end{align*}
			where the estimate $(\dagger)$ follows from the fact that $y$ is a $\delta$-tracing of $\xi^{(K)}$.
			
			\textbf{Claim (iii):} Our third step is to show that
			$$\left| \left( \sum_{i = 0}^{I - 1} \frac{p_i}{q} \operatorname{Avg}_{\left[ 0 , b_i^{(K)} - a_i^{(K)} \right]} f_h \left( x_i \right)
			\right) - \left( \sum_{i = 0}^{I - 1} \frac{p_i}{q} \int f_h \mathrm{d} \theta_i \right) \right| < \frac{\epsilon}{3}$$
			for sufficiently large $k \in \mathbb{N}$. This follows because
			\begin{align*}
				& \left| \left( \sum_{i = 0}^{I - 1} \frac{p_i}{q} \operatorname{Avg}_{\left[ 0 , b_i^{(K)} - a_i^{(K)} \right]} f_h \left( x_i \right)
				\right) - \left( \sum_{i = 0}^{I - 1} \frac{p_i}{q} \int f_h \mathrm{d} \theta_i \right) \right| \\
				=	& \left| \sum_{i = 0}^{I - 1} \frac{p_i}{q} \left( \operatorname{Avg}_{\left[ 0 , b_i^{(K)} - a_i^{(K)} \right]} f_h \left( x_i \right) - \int f_h \mathrm{d} \theta_i \right) \right| \\
				\leq	& \sum_{i = 0}^{I - 1} \frac{p_i}{q} \left| \operatorname{Avg}_{\left[ 0 , b_i^{(K)} - a_i^{(K)} \right]} f_h \left( x_i \right) - \int f_h \mathrm{d} \theta_i \right| \\
				= 	& \sum_{i = 0}^{I - 1} \frac{p_i}{q} \left| \left( \frac{1}{K p_i} \sum_{j = 0}^{K p_i - 1} f_h \left( T_j x_i \right) \right) - \int f_h \mathrm{d} \theta_i \right|
			\end{align*}
			If $k$ is sufficiently large that
			\begin{align*}
				\left| \left( \frac{1}{K p_i} \sum_{j = 0}^{K p_i - 1} f_h \left( T_j x_i \right) \right) - \int f_h \mathrm{d} \theta_i \right|	& < \frac{\epsilon}{3}	& \textrm{(for $i = 0 , 1, \ldots, I - 1$)} ,
			\end{align*}
			then
			$$\sum_{i = 0}^{I - 1} \frac{p_i}{q} \left| \left( \frac{1}{K p_i} \sum_{j = 0}^{K p_i - 1} f_h \left( T_j x_i \right) \right) - \int f_h \mathrm{d} \theta_i \right| < \sum_{i = 0}^{I - 1} \frac{p_i}{q} \; \frac{\epsilon}{3} = \frac{\epsilon}{3} .$$
		
		Taking these three claims together, we can say that
		\begin{align*}
			& \left| \left( \frac{1}{\pi(k)} \sum_{j = 0}^{\pi(k) - 1} f_h \left( T_j y^{(K)} \right) \right) - \int f \mathrm{d} \nu \right| \\
			\leq	& \left| \left( \frac{1}{\pi(k)} \sum_{j = 0}^{\pi(k) - 1} f_h \left( T_j y^{(K)} \right) \right) - \operatorname{Avg}_{\left[ a_0^{(K)}, b_0^{(K)}\right] \cup \left[ a_1^{(K)}, b_1^{(K)} \right] \cup \cdots \left[ a_{I - 1}^{(K)}, b_{I - 1}^{(K)} \right]} f_h \left( y^{(K)} \right) \right| \\
			& + \left| \left( \sum_{i = 0}^{I - 1} \frac{p_i}{q} \operatorname{Avg}_{\left[ a_i^{(K)} , b_i^{(K)} \right]} f_h \left( y^{(K)} \right) \right) - \left( \sum_{i = 0}^{I - 1} \frac{p_i}{q} \operatorname{Avg}_{\left[ 0 , b_i^{(K)} - a_i^{(K)} \right]} f_h \left( x_i \right) \right) \right| \\
			& + \left| \left( \sum_{i = 0}^{I - 1} \frac{p_i}{q} \operatorname{Avg}_{\left[ 0 , b_i^{(K)} - a_i^{(K)} \right]} f_h \left( x_i \right)
			\right) - \left( \sum_{i = 0}^{I - 1} \frac{p_i}{q} \int f_h \mathrm{d} \theta_i \right) \right| \\
			<	& \frac{\epsilon}{3} + \frac{\epsilon}{3} + \frac{\epsilon}{3} \\
			=	& \epsilon 
		\end{align*}
		for sufficiently large $k \in \mathbb{N}$.
		
		For each $h \in \{1, \ldots, H\}$, choose $k_h \in \mathbb{N}$ such that
		$$k \geq k_h \Rightarrow \left| \left( \frac{1}{\pi(k)} \sum_{j = 0}^{\pi(k) - 1} f_h \left( T_j y^{(K)} \right) \right) - \int f \mathrm{d} \nu \right| < \epsilon .$$
		Then if $k \geq \max \left\{ k_0, k_1 , \ldots, k_H \right\}$, it follows that $y^{(K)} \in E(\nu, \epsilon, H, k_0, \pi)$.
	\end{proof}
	
	\begin{proof}[Proof of Theorem \ref{Weak Spec Theorem with sampling}]
		We can metrize $\mathcal{M}(X)$ with the metric $\operatorname{dist} : \mathcal{M}(X) \times \mathcal{M}(X) \to [0, 1]$ defined by
		$$\operatorname{dist}(\beta_1, \beta_2) = \sum_{h = 1}^\infty \min 2^{-h} \left\{ \left| \int f_h \mathrm{d} (\beta_1 - \beta_2) \right| , 1 \right\} .$$
		For $\nu \in \mathcal{F}, k_0 \in \mathbb{N}, n \in \mathbb{N}, \pi \in \Pi$, write
		$$B(\nu, n, k_0) = \left\{ x \in X : \exists k \geq k_0 \; \left( \operatorname{dist}(\mu_{x, k}, \nu) < 1/n \right) \right\} .$$
		Choose $H_n \in \mathbb{N}$ such that $2^{-H_n} < 1/(2n)$. We claim that
		$$B(\nu, n, k_0, \pi) \supseteq E(\nu, 1/(2n), H_n, k_0) .$$
		If $x \in E(\nu, 1/(2n), H_n, k_0, \pi)$, then there exists $k \geq k_0$ such that
		\begin{align*}
			\operatorname{dist}(\mu_{x, \pi(k)}, \nu)	& = \sum_{h = 1}^\infty 2^{-h} \min \left\{ \int f_h \mathrm{d} \left( \mu_{x, \pi(j)} - \nu \right) , 1 \right\} \\
			& < 2^{-1} \frac{1}{2n} + 2^{-2} \frac{1}{2 n} + \cdots + 2^{-H_n} \frac{1}{2 n} + \sum_{h = H_\epsilon + 1}^\infty 2^{-h} \\
			& < \frac{1}{2n} + \frac{1}{2n} \\
			& = 1 / n.
		\end{align*}
		Thus $x \in B(\nu, n, k_0, \pi)$.
		
		We claim that $X ' \supseteq \bigcap_{\pi \in \Pi} \bigcap_{\nu \in \mathcal{F}} \bigcap_{n = 1}^\infty \bigcap_{k_0 = 1}^\infty B(\nu, n, k_0, \pi)$. Let \linebreak $x \in \bigcap_{\pi \in \Pi} \bigcap_{\nu \in \mathcal{F}} \bigcap_{n = 1}^\infty \bigcap_{k_0 = 1}^\infty B(\nu, n, k_0, \pi)$, and consider some $\nu \in \mathcal{M}_T(X)$. Choose a sequence $(\nu_\ell)_{\ell = 1}^\infty$ in $\mathcal{F}$ such that $\operatorname{dist}(\nu, \nu_\ell) < 1 / \ell$ for all $\ell \in \mathbb{N}$. Construct a sequence $(k_\ell)_{\ell = 1}^\infty$ in $\mathbb{N}$ recursively as follows:
		\begin{itemize}
			\item \textbf{Basis step:} Choose $k_1 \in \mathbb{N}$ such that $\operatorname{dist}\left( \mu_{x, \pi \left( k_1 \right)}, \nu_1 \right) < 1$, which exists because $x \in B(\nu_n, n, 1, \pi)$.
			
			\item \textbf{Recursive step:} Suppose we've chosen $k_1 < k_2 < \cdots < k_\ell$ such that $\operatorname{dist}\left(\mu_{x, \pi (k_n)} , \nu_n \right) < 1 / n$ for $n = 1, \ldots, \ell$. Chose $k_{\ell + 1} \geq k_\ell + 1$ such that $\operatorname{dist} \left( \mu_{x, \pi \left( k_{\ell + 1} \right)} , \nu_{\ell + 1} \right) < 1 / (\ell + 1)$, which exists because $x \in B(\nu_{\ell + 1}, \ell + 1, k_\ell + 1, \pi)$.
		\end{itemize}
		It follows then that
		$$\operatorname{dist}\left( \mu_{x, k_\ell} , \nu \right) \leq \operatorname{dist}\left( \mu_{x, k_\ell} , \nu_\ell \right) + \operatorname{dist} \left( \nu_\ell , \nu \right) < 2 / \ell \stackrel{\ell \to \infty}{\to} 0 ,$$
		i.e. $\nu \in \operatorname{LS} \left( \left( \mu_{x, k} \right)_{k = 1}^\infty \right)$.
		
		But $\bigcap_{\nu \in \mathcal{F}} \bigcap_{n = 1}^\infty \bigcap_{k_0 = 1}^\infty B(\nu, n, k_0)$ is a countable intersection of residual sets, and thus itself residual.
	\end{proof}
	
	\begin{Cor}
		Let $\mathbf{F} = \left( \{0, 1, \ldots, k - 1\} \right)_{k = 1}^\infty$, and suppose that $T : \mathbb{N}_0 \curvearrowright X$ is a H\"older action on $X$ that has the Very Weak Specification Property. Suppose $\Pi$ is a countable sampling family. Then the set of $x \in X$ such that $\operatorname{LS} \left( \left( \alpha_{B \left(x; r_{\pi(k)}\right)} \circ \operatorname{Avg}_{F_{\pi(k)}} \right)_{k = 1}^\infty \right) = \mathcal{M}_T(X)$ for all $(r_k)_{k = 1}^\infty$ that decay $(X, \rho, H, L, \mathbf{F})$-fast and $\pi \in \Pi$ is a residual subset of $X$.
	\end{Cor}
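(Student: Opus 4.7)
The plan is to combine Theorem \ref{Weak Spec Theorem with sampling} with Lemma \ref{Singly local pointwise reduction}: the former furnishes a residual set of points whose empirical measures have maximal oscillation relative to $\Pi$, while the latter reduces the temporo-spatial averages at a fixed center with rapidly shrinking radii to ordinary pointwise Birkhoff averages, thereby transferring the maximal oscillation from the latter to the former.

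First I would apply Theorem \ref{Weak Spec Theorem with sampling} to obtain the residual set
$$X^\Pi = \left\{ x \in X : \operatorname{LS} \left( (\mu_{x, \pi(k)})_{k=1}^\infty \right) = \mathcal{M}_T(X) \text{ for every } \pi \in \Pi \right\} .$$
Assuming $\mu$ has full support, so that $\alpha_{B(x;r)}$ is defined for every $x \in X$ and $r > 0$, I would argue that $X^\Pi$ is already contained in the set whose residuality is asserted by the corollary, which finishes the proof.

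To verify the containment, I fix $x \in X^\Pi$, a sequence $(r_k)_{k=1}^\infty$ decaying $(X, \rho, H, L, \mathbf{F})$-fast, and $\pi \in \Pi$. Lemma \ref{Singly local pointwise reduction} gives
$$\lim_{n \to \infty} \left( \alpha_{B(x; r_n)} \left( \operatorname{Avg}_{F_n} f \right) - \operatorname{Avg}_{F_n} f(x) \right) = 0$$
for every $f \in C(X)$. Restricting to the subsequence $n = \pi(k)$ (valid since $\pi(k) \to \infty$) and using $\operatorname{Avg}_{F_{\pi(k)}} f(x) = \int f \, \mathrm{d} \mu_{x, \pi(k)}$, the two sequences of measures $\alpha_{B(x; r_{\pi(k)})} \circ \operatorname{Avg}_{F_{\pi(k)}}$ and $\mu_{x, \pi(k)}$ satisfy $\int f \, \mathrm{d} (\alpha_{B(x; r_{\pi(k)})} \circ \operatorname{Avg}_{F_{\pi(k)}}) - \int f \, \mathrm{d} \mu_{x, \pi(k)} \to 0$ for every $f \in C(X)$. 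Since $\mathcal{M}(X)$ is weak-* metrizable, the triangle inequality in any compatible metric (for instance the metric $\operatorname{dist}$ from the proof of Theorem \ref{Weak Spec Theorem with sampling}) forces the two sequences to have identical weak-* limit sets, and by definition of $X^\Pi$ this common limit set is $\mathcal{M}_T(X)$.

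The argument is essentially bookkeeping; the only mild subtlety is justifying that pointwise-on-$C(X)$ closeness of the two measure sequences implies equality of their weak-* limit sets, which falls out of any compatible metric on $\mathcal{M}(X)$ together with the triangle inequality. The one hypothesis needed beyond what is printed is that $\mu$ has full support; without it, $B(x; r_k)$ may fail to have positive $\mu$-measure for $x \notin \operatorname{supp}(\mu)$, so one must intersect $X^\Pi$ with $\operatorname{supp}(\mu)$ and separately verify that this intersection is residual, which is automatic when $\operatorname{supp}(\mu) = X$.
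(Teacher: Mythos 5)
Your approach is exactly the paper's: the corollary is proved by combining Theorem \ref{Weak Spec Theorem with sampling} (residuality of $X^\Pi$) with Lemma \ref{Singly local pointwise reduction} (asymptotic equivalence of $\alpha_{B(x;r_k)}\circ\operatorname{Avg}_{F_k}$ with the empirical measures), and the paper's proof is the one-line remark that the lemma identifies the set in question with $X^\Pi$. Your more careful version — passing to the subsequence $n = \pi(k)$, using a compatible metric on $\mathcal{M}(X)$ to convert the pointwise-on-$C(X)$ closeness into equality of limit sets, and explicitly showing $X^\Pi$ is contained in the asserted set — fills in precisely the bookkeeping that the paper elides. Your observation that the corollary tacitly requires $\operatorname{supp}(\mu)=X$ (both for $\alpha_{B(x;r)}$ to be defined at every $x$ and for Lemma \ref{Singly local pointwise reduction} to apply) is a genuine and correct point that the paper does not address; absent full support, the set must be intersected with $\operatorname{supp}(\mu)$ and residuality in $X$ is no longer automatic.
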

	
	\begin{proof}
		Lemma \ref{Singly local pointwise reduction} tells us that this is exactly the set considered in Theorem \ref{Weak Spec Theorem with sampling}.
	\end{proof}
	
	Our Theorem \ref{Weak Spec Theorem with sampling} strengthens the following result of J. Li and M. Wu, since the Specification Property implies the Very Weak Specification Property.
	
	\begin{Cor}\label{Li}\cite[Theorem 1.3]{LiWuOscillation}
		Suppose $T : \mathbb{N}_0 \curvearrowright X$ has the Specification Property, and let $f \in C_\mathbb{R}(X)$ be a real-valued continuous function on $X$. Then the set
		$$\left\{ x \in X : \liminf_{k \to \infty} \frac{1}{k} \sum_{j = 0}^{k - 1} f \left( T_j x \right) = \underline{a}(f) , \; \limsup_{k \to \infty} \frac{1}{k} \sum_{j = 0}^{k - 1} f \left( T_j x \right) = \overline{a}(f) \right\} $$
		is residual.
	\end{Cor}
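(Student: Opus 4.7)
The plan is to derive this corollary directly from Theorem \ref{Weak Spec Theorem with sampling} by taking $\Pi = \{\operatorname{id}_{\mathbb{N}}\}$ to be the singleton family containing only the identity function. Since the Specification Property is strictly stronger than the Very Weak Specification Property (a constant modulus of specification is in particular a modulus of specification), Theorem \ref{Weak Spec Theorem with sampling} applies and yields a residual set $X'$ of points $x \in X$ such that $\operatorname{LS}\left( (\mu_{x, k})_{k=1}^\infty \right) = \mathcal{M}_T(X)$. I would then show that every $x \in X'$ lies in the target set of the corollary, which is sufficient since a subset of a residual set which contains a residual set is residual.

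So let $x \in X'$, and write $S_k(x) = \frac{1}{k} \sum_{j=0}^{k-1} f(T_j x) = \int f \, \mathrm{d} \mu_{x,k}$. For the upper bound, I would invoke Theorem \ref{Jenkinson's ergodic extrema} and the weak*-compactness of $\mathcal{M}_T(X)$ to select $\nu_{\max} \in \mathcal{M}_T(X)$ with $\int f \, \mathrm{d} \nu_{\max} = \overline{a}(f)$. Since $\nu_{\max} \in \operatorname{LS}\left( (\mu_{x,k})_{k=1}^\infty \right)$, there exists a subsequence $k_\ell \to \infty$ with $\mu_{x, k_\ell} \to \nu_{\max}$ weak*, whence
\[
\limsup_{k \to \infty} S_k(x) \geq \lim_{\ell \to \infty} S_{k_\ell}(x) = \int f \, \mathrm{d} \nu_{\max} = \overline{a}(f) .
\]
For the reverse inequality, let $(S_{k_\ell}(x))_{\ell=1}^\infty$ be any convergent subsequence; by passing to a further subsequence (using the weak*-compactness of $\mathcal{M}(X)$), we may assume $\mu_{x, k_\ell} \to \nu$ for some $\nu \in \mathcal{M}(X)$. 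By Lemma \ref{Invariance} applied to $\mathbf{F} = (\{0, 1, \ldots, k-1\})_{k=1}^\infty$, which is a \Folner{} sequence for $\mathbb{N}_0$, we have $\nu \in \mathcal{M}_T(X)$, so $\lim_{\ell \to \infty} S_{k_\ell}(x) = \int f \, \mathrm{d} \nu \leq \overline{a}(f)$. Therefore $\limsup_{k \to \infty} S_k(x) = \overline{a}(f)$.

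The symmetric argument using a minimizer $\nu_{\min} \in \mathcal{M}_T(X)$ with $\int f \, \mathrm{d} \nu_{\min} = \underline{a}(f)$ establishes $\liminf_{k \to \infty} S_k(x) = \underline{a}(f)$, completing the proof. There is no substantial obstacle here, since the hard analytic work is already encapsulated in Theorem \ref{Weak Spec Theorem with sampling}; the present argument is essentially a translation from the language of empirical measures to that of pointwise Birkhoff averages, exploiting that the functional $\nu \mapsto \int f \, \mathrm{d} \nu$ is weak*-continuous on $\mathcal{M}_T(X)$ and attains its extrema on this compact set.
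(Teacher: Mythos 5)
Your proposal is correct and follows essentially the same route as the paper: reduce to Theorem \ref{Weak Spec Theorem with sampling} with the singleton sampling family $\Pi = \{\operatorname{id}_{\mathbb{N}}\}$, then use the fact that $\operatorname{LS}\left((\mu_{x,k})_{k=1}^\infty\right) = \mathcal{M}_T(X)$ to pin down the $\liminf$ and $\limsup$ of the Birkhoff averages. The only cosmetic difference is that the paper selects ergodic maximizers/minimizers $\theta_1, \theta_2 \in \partial_e \mathcal{M}_T(X)$ and chains the inequalities $\underline{a}(f) \le \liminf_k S_k(x) \le \lim_\ell S_{k_\ell^{(1)}}(x) = \underline{a}(f)$ (the first inequality being implicit via $\underline{a}(f) = \underline{d}(f)$ from Theorem \ref{Jenkinson's ergodic extrema}), whereas you take arbitrary extremizers and justify the reverse inequalities explicitly through Lemma \ref{Invariance}; both are valid and amount to the same observation.
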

	
	\begin{proof}
		Let $\Pi = \left\{ k \mapsto k \right\}$ be the sampling family consisting solely of the identity function $\mathbb{N} \to \mathbb{N}$, and consider $x \in X^\Pi$. Since the Specification Property implies the Very Weak Specification Property, Theorem \ref{Weak Spec Theorem with sampling} tells us that $X^\Pi$ is residual. Let $\theta_1, \theta_2 \in \partial_e \mathcal{M}_T(X)$ such that
		\begin{align*}
			\int f \mathrm{d} \theta_1	& = \underline{a}(f) , \\
			\int f \mathrm{d} \theta_2	& = \overline{a}(f) .
		\end{align*}
		Then there exist $k_1^{(i)} < k_2^{(i)} < k_3^{(i)} \cdots$ for $i = 1, 2$ such that $\lim_{\ell \to \infty} \mu_{x, k_\ell^{(i)}} = \theta_i$. Thus
		\begin{align*}
			\underline{a}(f)	& \leq \liminf_{k \to \infty} \frac{1}{k} \sum_{j = 0}^{k - 1} f \left( T_j x \right)	& \leq \lim_{\ell \to \infty} \frac{1}{k_\ell^{(1)}} \sum_{j = 0}^{k_\ell^{(1)} - 1} f \left( T_j x \right)	& = \underline{a}(f) \\
			&	& \Rightarrow \liminf_{k \to \infty} \frac{1}{k} \sum_{j = 0}^{k - 1} f \left( T_j x \right)	& = \underline{a}(f) , \\
			\overline{a}(f)	& \geq \limsup_{k \to \infty} \frac{1}{k} \sum_{j = 0}^{k - 1} f \left( T_j x \right)	& \geq \lim_{\ell \to \infty} \frac{1}{k_\ell^{(2)}} \sum_{j = 0}^{k_\ell^{(2)} - 1} f \left( T_j x \right)	& = \overline{a}(f) \\
			&	& \Rightarrow \limsup_{k \to \infty} \frac{1}{k} \sum_{j = 0}^{k - 1} f \left( T_j x \right)	& = \overline{a}(f) .
		\end{align*}
		Therefore
		$$X^\Pi \subseteq \left\{ x \in X : \liminf_{k \to \infty} \frac{1}{k} \sum_{j = 0}^{k - 1} f \left( T_j x \right) = \underline{a}(f) , \limsup_{k \to \infty} \frac{1}{k} \sum_{j = 0}^{k - 1} f \left( T_j x \right) = \overline{a}(f) \right\},$$
		meaning the latter is residual.
	\end{proof}

\section*{Acknowledgments}

This paper is written as part of the author's graduate studies. He is grateful to his beneficent advisor, professor Idris Assani, for no shortage of helpful guidance.

An earlier version of this paper referred to ``tempero-spatial differentiations." Professor Mark Williams pointed out that the more correct portmanteau would be ``temporo-spatial." We thank Professor Williams for this observation.

\bibliography{Bibliography}
\end{document}